\DeclareFontFamily{T1}{wncyr}{}
\DeclareFontShape{T1}{wncyr}{m}{n}{%
  <5><6><7><8><9>gen*wncyr%
  <10><10.95><12><14.4><17.28><20.74><24.88>wncyr10}{}
\theoremstyle{plain}
\newtheorem{thm}{Theorem}[section]
\newtheorem{lem}[thm]{Lemma}
\newtheorem{prop}[thm]{Proposition}
\newtheorem{coro}[thm]{Corollary}
\theoremstyle{definition}
\newtheorem{defn}[thm]{Definition}
\newtheorem{conj}{Conjecture}
\theoremstyle{remark}
\newtheorem{rem}[thm]{Remark}
\newcommand{\on}{\operatorname}
\newcommand{\mc}{\mathcal}
\newcommand{\mbb}{\mathbb}
\newcommand{\mf}{\mathfrak}
\newcommand{\mscr}{\mathscr}
\newcommand{\id}{\ensuremath{\mathop{\rm id\,}\nolimits}}
\newcommand{\Z}{\mathbb{Z}}
\newcommand{\Q}{\mathbb{Q}}
\newcommand{\F}{\mathbb{F}}
\newcommand{\A}{\mathbb{A}}
\newcommand{\Gm}{{\mbb{G}_m}}
\newcommand{\m}{\mc{M}}
\newcommand{\ve}{\varepsilon}
\newcommand{\Gal}{\on{Gal}}
\newcommand{\ol}{\overline}
\newcommand{\st}{\stackrel}
\newcommand{\mx}{\mbox}
\newcommand{\geqs}{\geqslant}
\newcommand{\leqs}{\leqslant}
\newcommand{\sm}{\setminus}
\newcommand{\ra}{\rightarrow}
\newcommand{\lra}{\longrightarrow}
\newcommand{\PSL}{\on{PSL}}
\newcommand{\GT}{\on{GT}}
\newcommand{\Perf}{\on{Perf}}
\newcommand{\Hom}{\on{Hom}}
\newcommand{\Ext}{\on{Ext}}
\newcommand{\Gr}{\on{Gr}}
\newcommand{\HH}{\on{H}}
\newcommand{\CH}{\on{CH}}
\newcommand{\p}{\mathbb{P}}
\newcommand{\Sp}{\on{Spec}}
\newcommand{\Spec}{\Sp}
\newcommand{\Pic}{\on{Pic}}
\newcommand{\Of}[1][{}]{\mscr{O}_{#1}}
\newcommand{\Supp}{\on{Supp}}
\newcommand{\dme}[1][]{\mc{DM}_{gm}^{eff}}
\newcommand{\dm}[1][]{\mc{DM}_{gm}}
\newcommand{\DM}{\on{DM}}
\newcommand{\DMT}{\on{DMT}}
\newcommand{\MTM}{\on{MTM}}
\newcommand{\Sm}[1][]{\on{Sm_{#1} } }
\newcommand{\pst}[1]{{\p^1 #1 \setminus \{0,1,\infty\}} }
\newcommand{\ps}{{\pst{}}}
\newcommand{\mob}[1][n]{{\ol{\m_{0,#1}}}}
\newcommand{\mo}[1][n]{{\m_{0,#1}}}
\newcommand{\moD}[2][n]{{\m_{0,#1}^{#2}}}
\newcounter{notemargin}
\newcommand{\marginote}[1]{
\addtocounter{notemargin}{1}%
$^{\text{\arabic{notemargin}}}$%
\marginpar{\tiny{\arabic{notemargin}.-- #1}}
}
\newcommand{\Nis}{\on{Nis}}
\newcommand{\Mod}{\on{Mod}}
\newcommand{\SH}{\on{SH}}
\newcommand{\MS}[2][\ensuremath{\mathbb P}^1]{\on{Spt}_{#1}^{\Sigma}(#2)}
\newcommand{\MZ}[2][\ensuremath{\mathbb Z}]{\mathrm{M}#1_{#2}}
\newcommand{\DMZ}[2][\ensuremath{\mathbb Z}]{\on{DM}_{#1}(#2)}
\newcommand{\Ssp}[3][\ensuremath{\p^1}]{\Sigma_{#1}^{#3} ({#2}_{+})}
\newcommand{\Sus}[2]{\Sigma^{#1,#2}}
\newcommand{\ots}[1][]{\otimes_{#1}}
\newcommand{\Bl}{\on{Bl}}
\newlength{\ledge}
\newlength{\sibdis}
\newlength{\ecan}
\newlength{\ecas}
\newlength{\ecae}
\newlength{\ecao}
\newlength{\eca}
\newlength{\lbullet}
\tikzset{deftree/.style={level distance=\ledge,sibling distance=\sibdis}}
\tikzset{edgesp/.style={level distance=#1*\ledge,sibling distance=#1\sibdis}}
\tikzset{labf/.style={mathsc,yshift=-\eca}}
\tikzset{labfs/.style={mathss,yshift=-\eca}}
\tikzset{labr/.style={mathsc,yshift=\eca}}
\tikzset{labrs/.style={mathss,yshift=\eca}}
\tikzset{math mode/.style = {execute at begin node=$, execute at end node=$}}
\tikzset{mathscript mode/.style =%
 {execute at begin node=$\scriptstyle , execute at end node=$}}
\tikzset{math/.style = {execute at begin node=$, execute at end node=$}}
\tikzset{mathsc/.style =%
 {execute at begin node=$\scriptstyle , execute at end node=$}}
\tikzset{mathss/.style =%
 {execute at begin node=$\scriptscriptstyle , execute at end node=$}}
\tikzset{root/.style={draw,circle,inner sep=1pt,execute at begin node=$\bullet,
    execute at end node=$}}
\tikzset{roottest/.style={draw,circle,inner sep=#1pt}}
\tikzset{roots/.style={draw,circle,inner sep=2pt}}
\tikzset{bull/.style={fill,circle,minimum size=2pt,inner sep=0pt}}
\tikzset{leaf/.style={minimum size=2pt,inner sep=1pt}}
\tikzset{leafb/.style={minimum size=0pt,inner sep=0pt}}
\tikzset{intvertex/.style={mathsc,fill,circle,minimum size=0.6ex, inner sep=0pt}}
\tikzset{Reda/.style={-,double distance=0.3ex, draw=black}}
\tikzset{N/.style={-,thin,draw=black}}
\tikzset{Nd/.style={-,dotted, thin,draw=black}}
\renewcommand{\marginpar}[1]{}
\renewcommand{\marginote}[1]{}
\title
{A motivic Grothendieck-Teichmüller Group}
\author{Ismael Soud{\`e}res}
\address{Universität Osnabrück \\Institut für Mathematik
\\
Albrechtstr. 28a \\
49076 Osnabrück \\
Germany \\
ismael.souderes@uni-osnabrueck.de}
\date{\today}
\begin{document}
\thanks{}
\begin{abstract}This paper proves the Beilinson-Soulé vanishing conjecture for
  motives attached to the moduli spaces of curves of genus $0$ with $n$ marked
  points, $\m_{0,n}$. As part 
  of the proof, it is also proved that these motives are mixed Tate. As a
  consequence of Levine's work, one obtains then well defined categories of
  mixed Tate motives over the moduli spaces of curves $\m_{0,n}$. It is shown
  that morphisms between $\m_{0,n}$'s forgetting marked points and embedding as
  boundary components induce functors between those categories and how
  tangential bases points fit in these functorialities. 

Tannakian formalism
  attaches groups to these categories and morphisms reflecting the
  functorialities leading to the definition of a motivic
  Grothendieck-Teichmüller group.  

Proofs of the above properties rely on the geometry of the tower of the
$\m_{0,n}$. This allows us to treat the general case of motives over 
$\Spec(\Z)$ with $\Z$ coefficients working in Spitzweck's category 
of motives. From there, passing to $\Q$-coefficients we deal with the classical
tannakian formalism and explain how working over $\Spec(\Q)$ allows
a more concrete description of the tannakian group.  
 \end{abstract}
\maketitle
\tableofcontents
\section{Introduction}
In \cite{LEVTMFG}, M. Levine considers a smooth quasi-projective variety
$X$ over a number field $\F$. He shows that when the motive of $X$ in
$\DMZ[/\F,\Q]{\Spec(\F)}$ is mixed Tate and   satisfies the Beilinson-Soulé vanishing
properties, one has a well defined tannakian category of mixed Tate motives
$\MTM_{/\F, \Q}(X)$ whose tannakian Hopf algebra $H_X$ is built out of a complex of
algebraic cycles computing the higher Chow groups. Moreover M. Levine
proves that the tannakian group $G_X=\Spec(H_X)$ fits in a short exact sequence 
\[
1 \lra G_{X, geom} \lra G_X \lra 
G_{\Spec(\F)} \lra 1
\] 
where $G_{X, geom}$ can be identified with Deligne-Goncharov motivic fundamental
group $\pi_1^{mot}(X,x)$ after a choice of a (tangential) base point  $x \in
X(\F)$.

The above exact sequence admits a Lie coalgebra counterpart 
\[
0 \lra L^c_{\Spec(\F)}\lra L^c_X \lra L^{c}_{X,geom} \lra 0
\]
by considering the set of indecomposable elements of $H_X$. In
\cite{SouBarbase},  the author shows how explicit algebraic cycles, built in 
\cite{SouMPCC}, describe the coaction of $L^c_{\Spec(\F)}$ on $L^c_{X,geom}$ in
the case where 
\[
X=\m_{0,4}\simeq \ps.
\]

In order to generalize this work to any $\m_{0,n}$, the moduli space of curves
in genus $0$ with $n$ marked points, the first step consists in showing that
the moduli spaces $\m_{0,n}$ satisfy the Beilinson-Soulé vanishing conjecture.

However, if working over $\Spec(\F)$ allows M. Levine to relate $H_n$ to a
cycle complex computing motivic cohomology, the moduli space of curves are
well-defined over $\Spec(\Z)$. Therefore there is no reason to restrict ourselves
to $\Spec(\F)$ when considering only the Beilinson-Soulé vanishing property as
one can simply work in Cisinski-Déglise framework \cite{CiDegTCM}. Even more generally, the
Beilinson-Soulé vanishing property and the mixed Tate property are true in
M. Spitzweck framework \cite{SpitCP1S} of motives over $\Spec(\Z)$ with $\Z$
coefficients as proved at Theorem \ref{thm:M0nMTMBS}.

From Theorem \ref{thm:M0nMTMBS} giving the Beilinson-Soulé vanishing property
for the moduli spaces of curves $\m_{0,n}$, we deduce from M. Spitzweck's work
\cite{SpitCP1S, DFGTMSpit} that there exists a well defined triangulated
category $\DMT_{/\Spec(\Z), \Z}^{gm}(\m_{0,n})$ of mixed Tate motives over the
$\m_{0,n}$ (Theorem \ref{thm:DMTM0n}). There are  natural morphisms making 
 the moduli spaces $\m_{0,n}$ ``into a tower''. 
These morphisms are given by forgetting some marked points and by embedding of
$\ol{\m_{0,n_1}}\times \m_{0,n_2}$ as codimension $1$ boundary component of
$\ol{\m_{0,n_1+n_2-2}}$ on the Deligne-Mumford compactified tower.  These
morphisms induce functors between the categories $\DMT_{/\Spec(\Z),
  \Z}^{gm}(\m_{0,n})$ and morphisms between the corresponding tannakian groups
when working with 
$\Q$ coefficients. This leads to the definition of a motivic
Grothendieck-Teichmüller group at Definition \ref{GTderbQcoeff}.

The structure of the paper is as follows:
\begin{itemize}
\item Section \ref{sec:spitweckmot} reviews the framework of motivic $\p^1$
  spectra and the stable motivic homotopy category $\SH(S)$. It presents shortly
  M. Spitzweck's triangulated category of mixed motives over $S$ and reviews
  some of its properties: Gysin/localization triangle, projective bundle formula
  and blow-up formula. These  are derived from Déglise's work \cite{AGTIIDeg}
  because M. Spitzweck construction relies on an oriented  $E_{\infty}$-ring
  spectrum.
\item Section \ref{sec:m0n} reviews the geometry of the $\m_{0,n}$ and their
  Deligne-Mumford compactification $\ol{\m_{0,n}}$. It  proves
  that the triviality of normal bundle of $D_0$ in $\m_{0,n}\cup D_0$ for any
  open codimension $1$ stratum of  $\ol{\m_{0,n}}$.  Then it proves that the
    motives of  $\ol{\m_{0,n}}$ are mixed Tate
    over $\Spec(\Z)$ and satisfies the Beilinson-Soulé vanishing property. Then,
    it proves that the same holds for the open moduli spaces $\m_{0,n}$.
\item Section \ref{sec:GTmotgen} begins by reviewing the construction of limits
  motives as developed in \cite{OAMSpit, SpitMALM} and in \cite{Ayoub6OGII} and
  the case of motivic tangential based points. Then it shows how limits motives applied
  to the moduli space of curves $\m_{0,n}$ and an open codimension $1$ stratum $D_0$
  lead to natural functor between $\DMT_{/\Spec(\Z), \Z}^{gm}(\m_{0,n})$  and
  $\DMT_{/\Spec(\Z), \Z}^{gm}(\m_{0,n_1}\times \m_{0,n_2})$.  Tangential base
  points lead to functors 
\[
\DMT_{/\Spec(\Z), \Z}^{gm}(\Spec(\Z))\lra \DMT_{/\Spec(\Z), \Z}^{gm}(\m_{0,n}).
\]
Functoriality with respect to forgetful morphisms is a consequence of
M. Spitzweck construction. Working over $\Spec(\Z)$ with integral coefficients,
these categories are 
equivalent to categories of perfect representations of affine derived group
schemes. The above functorialities lead, as a conclusion of this section, to
the definition of a motivic Grothendieck-Teichmüller  group in this setting.
\item Section \ref{rationalcoefsetting} derives some consequences of the above
  constructions in more classical settings. In particular, working with
  $\Q$-coefficients, one obtains a tannakian group associated to the tannikian
  category given by the heart of the
  $t$-structure of   $\DMT_{/\Spec(\Z), \Q}^{gm}(\m_{0,n})$. This leads to a
  motivic Grothendieck-Teichmüller group defined in terms of automorphisms of groups
  (an not derived groups). Working over $\Spec(\F)$, the spectrum of a number
  field, we show how Deligne-Goncharov category of mixed Tate motives of the
  ring of its integers, agrees with M. Spitzweck construction of mixed
  motives and how our construction passes to this context. The end of the
  section presents the relation with
  M. Levine's approach to mixed Tate   motives and algebraic cycles.
\item The last section is devoted to some conjectures about the ``geometric''
  (derived) groups defining the motivic Grothendieck-Teichmüller group and
  its relations to Betti and De Rham realizations. 
\end{itemize}

\section{Short review of Spitzweck's mixed motives
  category}\label{sec:spitweckmot} 
Let $S$ be noetherian separated scheme of finite Krull dimension.  In
\cite{SpitCP1S}, Spitzweck built a $E_{\infty}$-ring  object
$\MZ{S}$ in the category $\MS{S}$ of motivic symmetric
 $\p^1$-spectra (cf. 
\cite{JardMSS, MHTDLOR, HoveySSSGMC}). The $\p^1$-spectrum $\MZ{S}$ serves in particular as
the motivic
Eilenberg-MacLane spectrum. It is also an oriented ring spectrum, that is in $\SH(S)$
it is an algebra 
over $\mathbf{MGL}$, the algebraic cobordism spectrum. Considering the category
$\Mod_{\MZ{S}}$ of modules over 
$\MZ{S}$, Spitzweck used a model structure on $\Mod_{\MZ{S}}$
compatible with the one on $\MS{S}$ and defined a triangulated category
$\DMZ{S}$ of motives 
 over $S$ with integral coefficients together with the following adjoint functor:
\[
\begin{tikzpicture}
\matrix (m) [matrix of math nodes,
 row sep=2em, column sep=2em]
{\Mod_{\MZ S} & \MS S  \\
};
\path[->,font=\scriptsize]
(m-1-1) edge node {} (m-1-2);
\path[->,font=\scriptsize, bend left]
(m-1-2) edge node[auto] {$\otimes\MZ{S}$} (m-1-1);
\end{tikzpicture} 
\]
and 
\[
\begin{tikzpicture}
\matrix (m) [matrix of math nodes,
 row sep=2em, column sep=2em]
{\DMZ{S} & \SH(S)  \\
};
\path[->,font=\scriptsize]
(m-1-1) edge node {} (m-1-2);
\path[->,font=\scriptsize, bend left]
(m-1-2) edge node[auto] {$\otimes \MZ{S}$} (m-1-1);
\end{tikzpicture} 
\]
where the left to right  functors $\lra$ are forgetful functors and the tensor
products is the one given by symmetric monoidal structure of $\MS{S}$
(corresponding to the smash product $\wedge$ in  \cite{JardMSS}).
 
We recall below some definitions and properties needed for our construction of
 motivic Grothendieck-Teichmüller group. Our construction is geometric and is
based on  the most expected distinguished triangles in $\DMZ{S}$ and on the
functoriality of its construction. In particular,  we recall below  Gysin's
distinguished triangles and Blow-ups formula in Spitzweck's category. Because of
the existence of Chern class in Spitzweck category and its relation with the
stable motivic homotopy category, these are
direct consequences of Déglise works \cite{AGTIIDeg}. Working
over a number field and with $\Q$ coefficient would cancel the need of the
following subsections as distinguished triangles where proved in
\cite{Vo00} and functoriality for the associated mixed Tate categories would
be insured by Levine's work \cite{LEVTMFG}.   

\subsection{Symmetric spectrum, $\SH(X)$ and mixed motives.}
Let $\Sm_S$ denote the category of smooth schemes of finite type  over $S$ and
$\Sm_S|_{\Nis}$ 
the smooth Nisnevich site over $S$. We recall below some facts about Spitzweck
construction \cite{SpitCP1S} and we are mostly interested in the case where $S$
is $\Sp(\Z)$. 

Let $Spc(S)$ be the category of (motivic) spaces over $S$, that is of Nisnevich
sheaves over $S$ with value into simplicial sets. M. Spitzweck construction
actually uses complexes of sheaves of abelian groups. Classical comparison
functor  and transfer of structures insure that his construction passes to
motivic spectra.  Via the Yoneda
embedding any scheme in $\Sm_S$ is a motivic space (constant in the simplicial
direction); any simplicial set is also a motivic space as a constant sheaf. The
terminal object is represented by $S$ itself.

A pointed (motivic) space is a motivic space $X$ together with a map
\[
x : S \lra X.
\]
The category of pointed spaces is denoted by $Spc_{\bullet}(S)$. To any 
space $X$, one associates a canonical pointed space $X_+=X\sqcup *$. The category
$Spc_{\bullet}(S)$ admits a monoidal structure $\otimes$
 induced by the smash product on pointed simplicial sets.

Recall that the simplicial circle is the coequalizer of
\[
\begin{tikzpicture}
\matrix (m)[matrix of math nodes, row sep=3em, column sep=5em, text
height=2.5ex, text depth=0.25ex]
{\Delta[0] &  \Delta[1] \\};
\node[yshift=0.5ex] (a) at (m-1-1.east){};
\node[yshift=0.5ex] (b) at (m-1-2.west){};
\node[yshift=-0.9ex] (c) at (m-1-1.east){};
\node[yshift=-0.9ex] (d) at (m-1-2.west){};
\path[->]
(a) edge  (b)
(c) edge (d);
\end{tikzpicture}
\] 
and let $S^1_s$ be the corresponding pointed space. Moreover, let $S^1_T$, the
Tate circle, be the pointed space represented by $(\p^1,\{\infty\})$.  

Very shortly, a symmetric $\p^1$-spectrum $E$ is a collection of pointed spaces
$E=(E_0,E_1,\ldots)$ with structure maps $S^1_T \otimes E_n \ra E_{n+1}$ and
with the extra data of a symmetric group actions $\Sigma_n \times E_n \ra E_n$
such that the composition maps 
\[
(S^1_T)^{\otimes p} \otimes E_n \lra E_{n+p}
\]
are $\Sigma_p\times \Sigma_n$ equivariant.

 The iterated products of $S^1_s$
(resp. $S^1_T$) are denoted by $S^n_s$ (resp. $S^m_T$).
Tensoring with the simplicial circle (resp. the Tate circle) induce a simplicial
(resp. a Tate) 
suspension functor denoted by $\Sigma_s^1$ (resp. $\Sigma_T^1$). Any motivic
space $X$ induces a symmetric $\p^1$-spectrum
\[
\Sigma_T^{\infty} X_+=(X_+, S^1_T \otimes X_+, S^2_T \otimes X_+, \ldots).
\]
We denote by $\MS S$ the category of symmetric $\p^1$-spectra. The \emph{(motivic)
  stable homotopy category} $\SH(S)$ is obtained from $\MS S$ by inverting stable weak
equivalence \cite{HoveySSSGMC}. 
In particular, the suspension functors $\Sigma_s$ and $\Sigma_T$ are invertible
as are $\A^1$ weak equivalences. 

The category $\SH(S)$ is a triangulated category with shift induced by $\Sigma^1_s$.
In $\SH(S)$ the suspension functor $\Sigma_s$ will be denoted by the shift notation $[1]$. Note that $S^1_T$ is
  isomorphic to $S^1_s\ots (\Gm,\{1\})$ in $\SH(S)$.

Spitzweck, in \cite[Definition 4.27]{SpitCP1S}, defined a $\p^1$-spectrum
$\MZ{S}$ or simply 
$\MZ{}$ when $S$ is clear enough. The $\p^1$-spectrum $\MZ S$ is an
$E_{\infty}$-ring object in $\MS S $ and induces a ring 
object in $\SH(S)$ again denoted by $\MZ{S}$.

The category of motives $\DMZ{S}$ is defined as the homotopy category of modules
(in $\p^1$-spectra) over $\MZ{S}$. For any $X \in \Sm_S$, the category $\DMZ{X}$
is defined similarly as the homotopy category of modules over $f^*\MZ{S}$
 where
$f : X \ra S$ is the structural morphism and $f^* : \MS S  \ra \MS X$ the
pull-back functor between spectra categories. In the case when we need to
remember over which base $S$ we are working, we may write $\DMZ[{/S,\Z}](X)$.

For $X\st{f}{\ra} S$ in $\Sm_S$, we have a functor
\[
\begin{tikzpicture}
\matrix (m) [matrix of math nodes,
 row sep=3em, column sep=4em]
{\Sm_X & \DMZ X \\
Y & M_X(Y)=\Ssp{Y}{\infty}\ots f^*\MZ{S}
\\
};
\path[->,font=\scriptsize]
(m-1-1) edge node[auto]  {$M_X$} (m-1-2);
\path[|->,font=\scriptsize]
(m-2-1) edge node[auto] {} (m-2-2.west);
\end{tikzpicture} 
\] 

In $\DMZ X$, the tensor unit $f^* \MZ S $ will be denoted by $\Z_X(0)$.
The Tate object $\Z_X(1)$ is defined by 
\[
\Z_X(1)[2]= \Sigma_T^{1}\Z_X(0)=(\p^1,\infty)\ots f^*\MZ S 
\] and
corresponds as usually to the cone of the morphism
\[
M_X(\{\infty\}) \lra M_X(\p^1)
\]
shifted by $-2$.

The suspension $\Sigma^{n-2p}_s \circ \Sigma_T^p$ will be denoted by $\Sus n p$.

\begin{rem}\label{6functorformalisme}
 M. Spitzweck shows in \cite[Section 10]{SpitCP1S} that the functor $X \lra \DMZ
 X$ satisfies the $6$ functors formalism.
\end{rem}
 
\subsection{An oriented cohomology theory}
For $f : X \ra S$ a smooth scheme over $S$, Spitzweck showed \cite[Proposition
11.1]{SpitCP1S} that the ring objects $\MZ S$ and $f^*\MZ S$ are oriented in the
sens of F. Morel and G. Vezzosi \cite{VezBSSH}, that is there is a distinguished
element
\[
\nu \in \Hom_{\SH(X)}(\Ssp[]{\p^{\infty}}{\infty}, \Sus 2 1 f^*\MZ S),
\]   
where $\p^{\infty}$ denotes the colimit of the $\p^n$, such that $\nu$ restricts
to the canonical element induced by the unit of $f^*\MZ S$ in
$\Hom_{SH(S)}(\Ssp[]{\p^1}{\infty}, \Sus 2 1 f^*\MZ S)$.

When $S$ is regular, the morphism 
\[
\Pic(Y)\lra \Hom_{\SH(S)}(\Ssp[]{Y}{\infty},\Ssp[]{\p^{\infty}}{\infty}),
\]  
 for any $Y\ra X$ smooth, is an isomorphism and endows $\DMZ X $ with an
 orientation as described by 
 F. Déglise in \cite[2.1-(Orient) axiom]{AGTIIDeg} (see \cite[2.3.2]{AGTIIDeg} or   
\cite[Proposition 4.3.8]{MorVoeHTS}). 
That is,  for any $Y\lra X$ smooth there is an application, called \emph{first
  Chern class}:
\[
c_1 : \Pic(Y) \lra \Hom_{\DMZ X}(M_X(Y),\Z_X(1)[2])
\]
which is functorial in $Y$ and such that the image of the canonical bundle on
$\p^1_X$ is the canonical projection.

Note that the formal group law attached to the first Chern class is the
additive ones (\cite[Theorem 7.10]{SpitCP1S})

Thanks to F. Déglise's work in \cite{AGTIIDeg}, one obtains  then the following
properties:
\subsection{Distinguished triangles and split formulas in $\DMZ X$ }
Let $Y$ be a smooth scheme in $\Sm_X$ and $p: P\lra Y$ a projective bundle over
$Y$ of rank $n$. We denote by $\lambda$  the canonical line
bundle over $P$ and put 
\[
c=c_1(\lambda) : M_X(P) \lra \Z_X(1)[2]. 
\]
The diagonal $\delta_i :P \ra \underbrace{ P\times_Y \cdots \times_Y P}_{i+1\mbox{
    times}}$ followed by $p_* \ots c^{\ots i}$ gives a morphism
\[
\epsilon_{P,i} : M_X(P) \lra  M_X(Y)(i)[2i]. 
\]
\begin{prop}[Projective bundle formula {(\cite[Theorem 3.2]{AGTIIDeg})}]\label{projbundle}
With the above notation, the morphism
\[
\epsilon_{P} : M_X(P) \lra  \bigoplus_{i=1}^{n}M_X(Y)(i)[2i]
\qquad \epsilon=\sum_{i=0}^n \epsilon_{p,i}
\]
is an isomorphism.
\end{prop}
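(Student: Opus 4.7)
The plan is to reduce the statement to Déglise's abstract projective bundle formula \cite[Theorem 3.2]{AGTIIDeg}, which holds in any triangulated premotivic category equipped with Tate twists, $\A^1$-homotopy invariance, Nisnevich descent, and a theory of first Chern classes with values in $\Z_X(1)[2]$ satisfying the orientation axiom. Almost nothing beyond a verification of these axioms is needed, since the construction of $\epsilon_P$ written in the statement is precisely the one Déglise shows to be invertible.

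First I would check that $\DMZ{X}$ fits into Déglise's framework. The triangulated monoidal structure comes from the model structure on $\Mod_{f^*\MZ{S}}$, and the Tate object $\Z_X(1)$ together with the suspension $\Sus{n}{p}$ is already fixed above. $\A^1$-homotopy invariance and Nisnevich descent are inherited from the corresponding properties of $\MS{X}$ and $\SH(X)$ through the adjunction $(-\ots\MZ{S},\textrm{forget})$. The six functor formalism recalled in Remark \ref{6functorformalisme} gives in particular the pushforward $p_*$ required to define $\epsilon_{P,i}$. Finally, the orientation axiom---existence of a functorial first Chern class $c_1 : \Pic(Y)\to \Hom_{\DMZ{X}}(M_X(Y),\Z_X(1)[2])$ compatible with the tautological class on $\p^1$---is exactly what the preceding subsection extracts from Spitzweck's orientation of $\MZ{S}$ using the identification $\Pic(Y)\cong \Hom_{\SH(S)}(\Ssp[]{Y}{\infty},\Ssp[]{\p^{\infty}}{\infty})$.

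Once these axioms are in place, Déglise's theorem applies verbatim and yields that the morphism
\[
\epsilon_P = \sum_{i=0}^n \epsilon_{P,i} : M_X(P)\lra \bigoplus_{i=0}^n M_X(Y)(i)[2i]
\]
is an isomorphism in $\DMZ{X}$. The additivity of the formal group law attached to $c_1$, recorded via \cite[Theorem 7.10]{SpitCP1S}, is what makes Déglise's axiomatic argument work in this integral and arithmetic setting, and not only over a number field with rational coefficients where one could instead invoke Voevodsky's classical computation.

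The main obstacle is essentially bookkeeping: one must make sure that every piece of structure Déglise requires (stability under pullback $f^*$, projection formulas for $p_*$, compatibility of the Chern class with base change, and the correct normalisation on $(\p^1,\{\infty\})$) is present in Spitzweck's category rather than only in the $\Q$-linear or field-based categories for which such formulas are folklore. Since all these compatibilities follow from $\MZ{S}$ being an $E_{\infty}$-ring in $\MS{S}$ together with the $\mathbf{MGL}$-algebra structure giving the orientation, no new geometric input is needed, and the argument never requires reducing to trivial projective bundles or inducting on the rank beyond what is contained in Déglise's proof.
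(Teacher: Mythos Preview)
Your proposal is correct and matches the paper's own treatment: the paper does not give an independent proof of this proposition but simply establishes in the preceding subsection that Spitzweck's spectrum is oriented (hence carries a first Chern class in D\'eglise's sense) and then cites \cite[Theorem 3.2]{AGTIIDeg} directly. One small inaccuracy: the additivity of the formal group law is not actually needed for D\'eglise's projective bundle formula, which holds for any orientation; additivity is recorded in the paper for other purposes but plays no role here.
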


For any $0\leqs r\leqs n $, one can now define the embedding
\[
\iota_i : M_X(Y)(r)[2r]\st{*(-1)^r}\lra \bigoplus_{i=1}^{n}M_X(Y)(i)[2i] \lra M_X(P).
\]

Let $Z$ be a smooth closed subscheme  of $Y$ smooth such that $Z$ is everywhere of
codimension $n$. As in more general situations, one defines the motive of $Y$ with
support in $Z$ as 
\[
M_{X,\Supp(Z)}(Y)=M(Y/(Y\sm Z)).
\] 

Then F. Déglise in Proposition 4.3 of \cite{AGTIIDeg}, attached to the pair
$(Z,Y)$ a unique isomorphism 
\begin{equation}\label{purityZY}
\mf{p}_{Y,Z} : M_{X, \Supp(Z)}(Y) \lra M_X(Z)(n)[2n] \tag{purity}
\end{equation}
which is functorial with respect to Cartesian morphism of such pairs and such
that, when $E$ is a vector bundle over $Y$ of rank $n$ and $P=\p(E\oplus 1)$,
$\mf p_{P,X}$  
is the inverse of 
\[
M_X(Y)(n)[2n] \st{\iota_n}{\lra} M_X(P) \lra M_{X,\Supp(Y)}(P).
\]  
Defining the Thom motive $M_XTh_Y(E)$ of a rank $n$ vector bundle $E$ over $Y$,
one obtains  the Thom isomorphism \cite[\S 4.4]{AGTIIDeg}:
\[
\mf p_{E,Y} : M_XTh_Y(E) \lra M_X(n)[2n].
\]

The purity isomorphism allows us to rewrite the localization distinguished
triangle as follows.
\begin{prop}[{Gysin triangle \cite[Definition 4.6]{AGTIIDeg}}]\label{GTci}Let
  $Z$ be smooth closed subscheme  of 
  $Y$ smooth such that $Z$ is everywhere of codimension $n$. Then there is a
  distinguished triangle

\begin{equation}\label{eq:GysinT}
M_X(Y\sm Z) \st{j_*}{\lra} M_X(Y) \st{i^*}{\lra}
M_X(Z)(n)[2n]\st{\partial_{Y,Z}}{\lra}
M_X(Y\sm Z)[1]
\end{equation}
where $i^*$ (resp. $\partial_{X,Z}$) is called the \emph{Gysin morphism}
(resp. \emph{residue morphism}). 
\end{prop}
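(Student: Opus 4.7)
The plan is to derive the Gysin triangle from the two ingredients already in place: the standard cofiber sequence attached to an open/closed decomposition in $\SH(X)$ (and hence in $\DMZ{X}$ after tensoring with $f^*\MZ{S}$), together with the purity isomorphism $\mf{p}_{Y,Z}$ recalled in \eqref{purityZY}.

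First I would start from the cofiber sequence in $\Spc_{\bullet}(X)$ associated to the open immersion $j : Y \sm Z \hra Y$. By definition the quotient motivic space $Y/(Y \sm Z)$ fits in a homotopy cofiber sequence
\[
(Y \sm Z)_+ \lra Y_+ \lra Y/(Y \sm Z).
\]
Applying $\Sigma_T^{\infty}(-) \ot f^*\MZ{S}$ and using that cofiber sequences in the stable motivic homotopy category become distinguished triangles in $\SH(X)$ (and hence in $\DMZ{X}$, since $\DMZ{X}$ is the homotopy category of $f^*\MZ{S}$-modules, and the forgetful functor to $\SH(X)$ detects triangles), I obtain the distinguished triangle
\[
M_X(Y \sm Z) \st{j_*}{\lra} M_X(Y) \lra M_{X,\Supp(Z)}(Y) \st{+1}{\lra}
\]
which is just the localization triangle for the pair $(Z,Y)$ once one recalls the definition $M_{X,\Supp(Z)}(Y) = M(Y/(Y \sm Z))$.

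Next I would insert the purity isomorphism. By \eqref{purityZY}, the assumption that $Z \hra Y$ is a closed immersion of smooth schemes of pure codimension $n$ yields a canonical isomorphism
\[
\mf{p}_{Y,Z} : M_{X,\Supp(Z)}(Y) \ira M_X(Z)(n)[2n].
\]
Substituting this into the localization triangle, and defining the Gysin morphism $i^*$ as the composite of the canonical map $M_X(Y) \to M_{X,\Supp(Z)}(Y)$ with $\mf{p}_{Y,Z}$, and defining $\partial_{Y,Z}$ as the connecting morphism of the triangle precomposed with $\mf{p}_{Y,Z}^{-1}$, yields the asserted distinguished triangle \eqref{eq:GysinT}.

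The only genuinely non-formal ingredient is the existence and canonicity of the purity isomorphism $\mf{p}_{Y,Z}$, but this has already been secured earlier by appealing to D\'eglise's construction in \cite{AGTIIDeg}, which applies because $\MZ{S}$ (and thus $f^*\MZ{S}$) is an oriented ring spectrum with additive formal group law. Everything else is pure triangulated-category manipulation: a cofiber sequence becoming a distinguished triangle and the relabeling of its terms via an isomorphism. I expect no obstacle beyond this appeal to purity; in particular, functoriality of the triangle in Cartesian squares of such pairs $(Z,Y)$ would follow from the corresponding functoriality of $\mf{p}_{Y,Z}$ already established in \cite[Proposition 4.3]{AGTIIDeg}, should it be needed elsewhere.
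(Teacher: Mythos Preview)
Your proposal is correct and matches the paper's approach exactly: the paper simply states that ``the purity isomorphism allows us to rewrite the localization distinguished triangle'' and then records the Gysin triangle, which is precisely your argument spelled out in more detail.
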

The Gysin triangle is functorial and in particular compatible with the
projective bundle isomorphisms and with the induced embeddings $\mf
l_r$. Moreover Gysin morphisms are multiplicative with respect to compositions
and products \cite[corollaries 4.33 and 4.34]{AGTIIDeg}.

Using the projective bundle formula first in the case of $Y=X$, F. Déglise showed
that $p : \p^n_X \lra X$ admits  a strong dual given by \cite[Definition 5.6]{AGTIIDeg}: 
\[
M_X(\p^n_X)(-n)[-2n]
\]
together with a Gysin morphism $p^* : M_X(X) \lra M_X(\p^n_X)(-n)[-2n]$.
This duality allows to define a Gysin morphism $p_X^* : M_X(X) \lra
M_X(\p^n_X)(-n)[-2]$ as the transpose of $p_*$. Then, generalizing this
situation to smooth projective $X$-schemes and factorizing a projective
morphism $f: Y_1 \lra Y_2$ between such as a closed immersion and a projection
$f=p\circ i$, F. Déglise obtained a Gysin morphism \cite[Definition 5.12]{AGTIIDeg}
\[
f^*:=i^*\circ p^* : M_X(Y_2) \lra M_X(Y_1)(-d)[-2d]
\]
where $i$ is of codimension $d+n$ into $\p^n_{Y_2}$

Now, let $p: Y \lra X$ be a smooth projective scheme over $X$ of pure dimension
$n$. The above Gysin morphisms lead to the 
definition of 
\[
\mu_Y :\Z_X(0) \st{p^*}{\ra} M_X(Y)(-n)[-2n] 
\st{\delta_*}{\ra} M_X(Y)(-n)[-2n] \ots M_X(Y)
\] 
and 
\[
\epsilon_Y: M_X(Y) \ots M_X(Y)(-n)[-2n] \st{\delta^*}{\ra}M_X(Y) 
\st{p_*}{\ra} \Z_ X(0)
\]
where $\delta$ is the diagonal  $\delta : Y \lra Y \times_X Y$.
\begin{thm}[Duality - {\cite[Theorem 5.23 and Proposition 5.26]{AGTIIDeg}}]
\label{Duality}Let $p:Y\lra X$ be a smooth projective scheme over $X$. The
motive 
$M_X(Y)(-n)[-2n]$, endowed with $\mu_Y$ and 
$\epsilon_Y$, is a strong dual of $M(X)$.
%
\end{thm}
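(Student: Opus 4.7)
The plan is to establish the two triangle identities $(\epsilon_Y \ot \id_{M_X(Y)}) \circ (\id_{M_X(Y)} \ot \mu_Y) = \id_{M_X(Y)}$ and $(\id_{M_X(Y)(-n)[-2n]} \ot \epsilon_Y) \circ (\mu_Y \ot \id_{M_X(Y)(-n)[-2n]}) = \id_{M_X(Y)(-n)[-2n]}$, proceeding in the same spirit as F.~Déglise in \cite{AGTIIDeg}. The strategy is to reduce, via an embedding in a projective bundle, the general smooth projective case to the case of projective space $\p^n_X \lra X$, where the duality can be checked explicitly from the projective bundle formula.

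First I would treat the case $Y = \p^n_X$. The projective bundle formula (Proposition \ref{projbundle}) gives an explicit decomposition $M_X(\p^n_X) \simeq \bigoplus_{i=0}^n M_X(X)(i)[2i]$, and the dual should then be $\bigoplus_{i=0}^n M_X(X)(-i)[-2i]$, so that $M_X(\p^n_X)(-n)[-2n] \simeq \bigoplus_{i=0}^n M_X(X)(i-n)[2(i-n)]$. After identifying $\mu_{\p^n_X}$ and $\epsilon_{\p^n_X}$ through these decompositions (using that the Gysin morphism $p^* : M_X(X) \to M_X(\p^n_X)(-n)[-2n]$ is the transpose of $p_*$, and that $\delta$ intertwines them with the diagonal embedding $\iota_r$), the triangle identities reduce to the orthogonality relations between the projectors $\epsilon_{P,i}$ and the embeddings $\iota_j$ coming from the projective bundle formula. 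This is a bookkeeping exercise with Chern classes.

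Next, I would pass to a general smooth projective $Y \lra X$. Factor the structure morphism as $p = q \circ i$ where $i : Y \hookrightarrow \p^N_X$ is a closed immersion of codimension $d$ and $q : \p^N_X \lra X$ is the projection (with $N = n + d$). The Gysin formalism of Proposition \ref{GTci}, applied to $i$, produces a purity isomorphism $\mf p_{\p^N_X, Y} : M_{X, \Supp(Y)}(\p^N_X) \simeq M_X(Y)(d)[2d]$, hence Gysin morphisms $i^* : M_X(\p^N_X) \to M_X(Y)(d)[2d]$ and, dually, $i_* : M_X(Y)(-n)[-2n] \to M_X(\p^N_X)(-N)[-2N]$. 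Composing these with the duality data for $\p^N_X$ gives candidate unit and counit for $Y$, and the point is to check that this composite coincides with $(\mu_Y, \epsilon_Y)$ as defined via $p^*$, $p_*$ and $\delta$. This identification follows from the multiplicativity of Gysin morphisms under composition and their compatibility with products and diagonals (corollaries 4.33 and 4.34 of \cite{AGTIIDeg}).

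The main obstacle is the last compatibility step: namely verifying that the diagonal $\delta_Y : Y \lra Y \times_X Y$ interacts correctly with the closed immersion $i$, so that the composite of the Gysin triangle data recovers $\mu_Y$ and $\epsilon_Y$. Concretely, one must compare $(i \times i)_* \circ \delta_{Y*}$ with $\delta_{\p^N_X *} \circ i_*$ up to the purity isomorphism on the square of the immersion, and this is where the projection formula and the self-intersection compatibility of Gysin morphisms must be invoked carefully. Once these naturality diagrams commute, the triangle identities for $(Y, \mu_Y, \epsilon_Y)$ follow from those already established for $(\p^N_X, \mu_{\p^N_X}, \epsilon_{\p^N_X})$, completing the proof.
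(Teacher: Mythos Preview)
The paper does not actually prove this theorem: it is stated as a citation of D\'eglise's results \cite[Theorem 5.23 and Proposition 5.26]{AGTIIDeg}, with no argument given beyond the definitions of $\mu_Y$ and $\epsilon_Y$ in the preceding paragraph. So there is no ``paper's own proof'' to compare against here; the result is imported wholesale from the reference.

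That said, your sketch is a faithful outline of the strategy D\'eglise himself follows in \cite{AGTIIDeg}: establish the duality for $\p^n_X$ directly from the projective bundle decomposition, then transport it to an arbitrary smooth projective $Y$ through a closed immersion $i: Y \hookrightarrow \p^N_X$ using the multiplicativity and base-change compatibilities of Gysin morphisms. The one place where your outline is a bit optimistic is the last paragraph: the statement that the triangle identities for $Y$ ``follow from'' those for $\p^N_X$ hides real work. It is not enough that $Y$ sits inside a self-dual object; one must show that $i^*$ and $i_*$ are transpose to one another under the duality for $\p^N_X$ (this is essentially D\'eglise's Proposition 5.26), and then that the resulting induced duality on $M_X(Y)$ agrees with the intrinsic $(\mu_Y,\epsilon_Y)$ defined via $p^*, p_*, \delta$. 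Both steps require the excess-intersection and self-intersection formulas for Gysin maps, not merely their functoriality under composition. Your sketch gestures at this but does not isolate it as the crux.
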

\begin{prop}[{Blow-up formula - \cite[Theorem 5.38]{AGTIIDeg}}]\label{Blup}
Let $Y$ be a smooth scheme over $X$ and $Z$ a smooth closed subscheme of $Y$
purely of codimension $n$.

 Let $B_Z (Y)$ be the blow-up of $Y$  with center $Z$ and $E_Z$ be the exceptional
divisor, then:
\[
M(B_Z(Y))\simeq M_X(Y)\oplus \bigoplus_{i=1}^{n-1}M_X(Z)(i)[2i].
\]
\end{prop}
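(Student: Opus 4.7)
The strategy is to derive the splitting from the interplay of two Gysin triangles (Proposition~\ref{GTci}): one for the pair $(Z,Y)$ and one for $(E_Z,B_Z(Y))$, combined with the projective bundle formula (Proposition~\ref{projbundle}) applied to the $\p^{n-1}$-bundle $E_Z\simeq \p(N_{Z/Y})\to Z$. This is the classical blow-up argument, carried out in D\'eglise's oriented cohomology setting.

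First I would write down both Gysin triangles. For $(Z,Y)$ this gives
\[
M_X(Y\sm Z) \lra M_X(Y) \lra M_X(Z)(n)[2n] \lra M_X(Y\sm Z)[1]
\]
and for $(E_Z,B_Z(Y))$, where $E_Z$ is a divisor,
\[
M_X(B_Z(Y)\sm E_Z) \lra M_X(B_Z(Y)) \lra M_X(E_Z)(1)[2] \lra M_X(B_Z(Y)\sm E_Z)[1].
\]
Since the blow-up morphism $\pi : B_Z(Y) \to Y$ is an isomorphism away from the center, one has $B_Z(Y)\sm E_Z \simeq Y\sm Z$, and the projective bundle formula yields $M_X(E_Z)(1)[2] \simeq \bigoplus_{i=1}^{n} M_X(Z)(i)[2i]$. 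The square formed by the closed immersions $E_Z \hra B_Z(Y)$, $Z \hra Y$ and the two contractions is Cartesian, and the functoriality of the Gysin triangle in such transverse squares (part of D\'eglise's framework in \cite{AGTIIDeg}) produces a morphism of distinguished triangles whose right-most vertical arrow is, after the above identification, the projection onto the top summand $M_X(Z)(n)[2n]$.

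Second I would exploit the Gysin morphism attached to the projective birational morphism $\pi$, as recalled before Theorem~\ref{Duality}. This yields $\pi^* : M_X(Y) \to M_X(B_Z(Y))$ with $\pi_*\circ\pi^* = \id_{M_X(Y)}$ because $\pi$ is birational, hence a canonical splitting $M_X(B_Z(Y)) \simeq M_X(Y) \oplus K$ for some complement $K$. A diagram chase combining the two Gysin triangles above with the splitting of the top summand $M_X(Z)(n)[2n]$ of $\bigoplus_{i=1}^n M_X(Z)(i)[2i]$ then identifies $K$ with $\bigoplus_{i=1}^{n-1} M_X(Z)(i)[2i]$, yielding the formula.

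The crux, and the main obstacle, is the identification of the residue map $M_X(Z)(n)[2n] \to M_X(Y\sm Z)[1]$ of the $(Z,Y)$ triangle with the composition of the inclusion of $M_X(Z)(n)[2n]$ as the top summand of $M_X(E_Z)(1)[2]$ followed by the residue of the $(E_Z,B_Z(Y))$ triangle. This is an excess intersection statement reflecting the self-intersection of the exceptional divisor, and at the level of first Chern classes amounts to saying that $c_1(\mathcal{O}_{E_Z}(-1))^{n-1}$ generates the top graded piece of the projective bundle decomposition. This compatibility is exactly what D\'eglise's purity isomorphism \eqref{purityZY}, together with the multiplicativity and transverse-base-change properties of Gysin morphisms established in \cite{AGTIIDeg}, is designed to deliver.
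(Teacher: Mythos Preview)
The paper does not give its own proof of this proposition: it is stated with the citation \cite[Theorem 5.38]{AGTIIDeg} and used as a black box. Your outline is the standard oriented-theory argument and matches the strategy D\'eglise employs in the cited reference, so there is nothing to compare on the paper's side.

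That said, your sketch is correct in spirit but leaves the most delicate point underargued. The identification of the complement $K$ with $\bigoplus_{i=1}^{n-1} M_X(Z)(i)[2i]$ does not follow from a mere ``diagram chase'': in a general triangulated category, knowing that two distinguished triangles share an object and that a morphism of triangles exists does not by itself force the cone to split as desired. What makes it work here is that the morphism of Gysin triangles induced by the Cartesian square is explicitly computed via the excess intersection formula: the map $M_X(E_Z)(1)[2]\to M_X(Z)(n)[2n]$ is multiplication by $c_{n-1}$ of the excess bundle (the quotient of $p^*N_{Z/Y}$ by $N_{E_Z/B_Z(Y)}$), which under the projective bundle decomposition becomes precisely the projection onto the top summand. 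This is \cite[Proposition 4.16 and Theorem 4.29]{AGTIIDeg}, and it is this computation, together with the retraction $\pi_*\pi^*=\id$, that forces the splitting. Without invoking the excess intersection formula explicitly, the step from ``morphism of triangles with split right-hand square'' to ``the middle cone is the remaining summands'' is a genuine gap.
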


\subsection{Beilinson-Soulé's Vanishing property}
M. Levine, in \cite{LEVTMFG}, proved that if $X$ is a smooth variety over a
number field $\F$ with a motive of mixed Tate type and satisfying Beilinson-Soulé
vanishing property (see  \eqref{BS}) then there exits a well defined
tannakian (in particular $\Q$ linear) category $\MTM_{/\F, \Q}(X)$ of mixed Tate
motives over $X$ (\cite[Theorem 3.6.9]{LEVTMFG}) together with a short exact
sequence relating the tannakian groups of $\MTM_{/\F, \Q}(\F)$ and of $\MTM_{/\F, \Q}(X)$
(\cite[Section 6.6 ]{LEVTMFG}). This
short exact sequence is a motivic avatar of the short exact sequence for etale
fundamental groups relating $\Gal(\bar \F / \F)$ and $\pi_1^{et}(X)$.

In a similar direction, M. Spitzweck obtains in \cite{DFGTMSpit}, that the
triangulated category $\DMT_{/\F, \Z}^{gm}(X)$ of mixed Tate motives over $X$ (that
is before applying a $t$-structure 
and obtaining $\MTM (-)$) is the category $\Perf(B^{\bullet}_X)$ of perfect
representations of an affine derived group scheme over $\Z$ provided that $X$ is
a smooth connected $\F$-scheme 
of finite type ($\F$ is  any field) satisfying a weaker Beilinson-Soulé's
vanishing property \eqref{wBS} (see \cite[Theorem 2.2]{DFGTMSpit}).
Corollary 8.4 in \cite{SpitCP1S}  extends this construction to the case where
$X$ is simply smooth 
over $S$ and satisfying \eqref{wBS}. We give the necessary definition and
results below.

M. Spitzweck showed at corollaries 7.19 and 7.20 in \cite{SpitCP1S} that his
construction retrieves motivic cohomology.
\begin{prop}[{\cite[7.19 and 7.20]{SpitCP1S}}]\label{motcoh}For a smooth scheme
  $X$ over $S=\Spec(D)$ the spectrum of a Dedekind domain of mixed
  characteristic, one has  
\begin{align*}
 \Hom_{\SH(S)}(\Ssp[]{X}{\infty}), \MZ
S(p)[k]) & \simeq \Hom_{\DM(S)}(M_S(X), \Z_S(p)[k])  \\
& \simeq \Hom_{\DM(X)}(\Z_X(0), \Z_X(p)[k])  \\
 & \simeq \HH^k_{mot} (X, p)
\end{align*}

where $\HH^k_{mot} (X, p)$ denotes the motivic cohomology in the sense of Levine
\cite{LevMM};  that is it gives back the
higher Chow groups (see \cite{BlochACHKT, BKMTM, LevBHCG, LEVTMFG}) of $X$:
\begin{equation}\label{HmotHCG}
\HH^k_{mot}(X,p)=\CH^p(X,2p-k)
\end{equation}
\end{prop}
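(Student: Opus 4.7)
The plan is to establish the chain of three isomorphisms using only the adjunctions and the six-functor formalism already set up in this section, the essential input being Spitzweck's identification of $\MZ S$ with a spectrum computing motivic cohomology.

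First, I would derive the isomorphism $\Hom_{\SH(S)}(\Ssp[]{X}{\infty}, \MZ S(p)[k]) \simeq \Hom_{\DM(S)}(M_S(X), \Z_S(p)[k])$ as a formal consequence of the free--forgetful adjunction $- \otimes \MZ S \dashv U$ between $\MS S$ and $\Mod_{\MZ S}$ recalled at the beginning of the section. By construction $M_S(X) = \Ssp[]{X}{\infty} \otimes \MZ S$, and the tensor unit shifted to $\Z_S(p)[k]$ in $\DMZ S$ has underlying spectrum $\MZ S(p)[k]$; the adjunction transports the Hom groups accordingly.

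Second, for the structural morphism $f : X \to S$, I would use the six-functor formalism (Remark \ref{6functorformalisme}), which provides the adjunction $(f_{\#}, f^*)$ together with the normalisations $f^* \Z_S(p)[k] = \Z_X(p)[k]$ and $M_S(X) = f_{\#} \Z_X(0)$. Adjointness then yields
\[
\Hom_{\DM(S)}(f_{\#} \Z_X(0), \Z_S(p)[k]) \simeq \Hom_{\DM(X)}(\Z_X(0), f^*\Z_S(p)[k]) = \Hom_{\DM(X)}(\Z_X(0), \Z_X(p)[k]).
\]
Third, to identify this last group with $\HH^k_{mot}(X,p)$, I would invoke Spitzweck's explicit construction of $\MZ S$ over a Dedekind base of mixed characteristic, where the spectrum is built from a presheaf of cycle complexes whose sections on a smooth $X$ compute Levine's motivic cohomology. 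The final equality $\HH^k_{mot}(X,p) = \CH^p(X, 2p-k)$ is Levine's comparison theorem \cite{LevBHCG} between his formulation of motivic cohomology and Bloch's higher Chow complex.

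The main obstacle is the third step: the first two reductions are essentially formal manipulations of adjunctions, but checking that $\MZ S$ represents Levine's motivic cohomology on smooth schemes over a Dedekind domain of mixed characteristic requires the full strength of Spitzweck's construction. Over a field this would reduce to Voevodsky's comparison via Nisnevich sheaves with transfers, but in the arithmetic setting one must handle the different residue characteristics simultaneously and verify Nisnevich descent directly on the cycle complex presheaf. Provided this cycle-theoretic identification is granted, compatibility with the adjunction isomorphisms of the first two steps is a routine matter of tracing through constructions.
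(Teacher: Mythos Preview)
The paper does not prove this proposition at all: it is stated with the attribution \cite[7.19 and 7.20]{SpitCP1S} and no proof environment follows. The result is simply imported from Spitzweck's work as a black box, to be used in the subsequent discussion of the Beilinson--Soul\'e vanishing property.

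Your outline is nonetheless a correct account of why the statement holds. The first two isomorphisms are exactly the formal adjunctions you name (free--forgetful for $-\otimes \MZ S$, and $(f_{\#},f^*)$ from the six-functor package), and you are right that the only substantive step is the third: identifying the $\MZ S$-cohomology of a smooth $S$-scheme with Levine's motivic cohomology over a mixed-characteristic Dedekind base. That is precisely what Spitzweck establishes in the cited corollaries, and the further identification with higher Chow groups is Levine's comparison. So your proposal is not so much a different proof as an unpacking of what the citation contains; the paper itself is content to quote the result.
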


\begin{defn}[Beilinson-Soulé vanishing property]\label{def:BS}
Let $X$ be a smooth scheme over $S$. One says that $X$ satisfies
  Beilinson-Soulé vanishing property \eqref{BS} if and only if 
\begin{equation}\label{BS}
\Hom_{\SH(S)}(\Ssp[]{X}{\infty}), \MZ S(p)[k])=0, 
\tag{BS}
\end{equation}
for all $p\geqs 0$ and $k<0$ and for all $p>0$ and $k=0$. 
\end{defn}
M. Spitzweck often needs only a weaker form of this property.
\begin{defn}[weak Beilinson-Soulé vanishing property]\label{def:wBS}
Let $X$ be a smooth scheme over $S$. One says that $X$ satisfies
  Beilinson-Soulé vanishing property \eqref{wBS} if and only if 
\begin{multline}\label{wBS}
\forall p>0,\quad 
\exists N \in \Z, \mx{ such that }
\mbox{ and }k < N ,\\
\Hom_{\SH(S)}(\Ssp[]{X}{\infty}), \MZ S(p)[k])=0.
\tag{wBS}
\end{multline}

\end{defn}
\begin{rem}\label{rem:Himot}
Note also as a consequence of \cite[in particular Theorem 7.10]{SpitCP1S}, one
 has also for $X$ a smooth irreducible $S$-scheme ($S$ regular)
\[
\Hom_{\DM(S)}(M_S(X),\Z(p)[k])=\left\{ 
\begin{array}{ll}
0 & \mbox{for  }p<0 \\
0 & \mbox{for }p=0 \mbox{ and }k\neq 0 \\
\Z & \mbox{for }p=0 \mbox{ and }k= 0 \\
\Of[X](X)^* & \mbox{for }p=1 \mbox{ and }k=1 \\
  \Pic(X) & \mbox{for }p=1 \mbox{ and }k=2 \\
\end{array}
\right.
\]
\end{rem}

\begin{thm}\label{BSZ}
$S=\Spec(\Z)$ satisfies \eqref{BS} property.
\end{thm}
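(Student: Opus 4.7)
The plan is to reduce everything to motivic cohomology via Proposition \ref{motcoh}, then to deploy the known computation of the motivic cohomology of $\Spec(\Z)$.

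First I would rewrite the statement. By Proposition \ref{motcoh} combined with \eqref{HmotHCG}, one has
\[
\Hom_{\SH(\Spec\Z)}(\Ssp{\Spec\Z}{\infty}, \MZ{\Spec\Z}(p)[k]) \;\simeq\; \HH^k_{mot}(\Spec\Z,p) \;\simeq\; \CH^p(\Spec\Z,2p-k),
\]
so the task is to establish $\CH^p(\Spec\Z,2p-k)=0$ for $p\geqs 0$, $k<0$ and for $p>0$, $k=0$.

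Next I would dispose of the low-weight cases directly from Remark \ref{rem:Himot}. For $p=0$ the only non-zero group is $\HH^0_{mot}(\Spec\Z,0)=\Z$, which lives in degree $k=0$, so the required vanishing in degrees $k<0$ is immediate. For $p=1$ the remark tells us that the only potentially non-zero groups are $\HH^1_{mot}(\Spec\Z,1)=\Oen^*_{\Spec\Z}(\Spec\Z)=\{\pm 1\}$ and $\HH^2_{mot}(\Spec\Z,1)=\Pic(\Spec\Z)=0$, both of which sit in strictly positive cohomological degree; consequently the (wBS) and (BS) conditions hold trivially in weight $1$.

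For $p\geqs 2$ and $k\leqs 0$ there is nothing left to do but quote the classical computation of the motivic cohomology, equivalently of the higher Chow groups, of $\Spec(\Z)$. The Atiyah--Hirzebruch-type motivic spectral sequence
\[
E_2^{a,b}=\HH^{a-b}_{mot}(\Spec\Z,-b)\Longrightarrow K_{-a-b}(\Z)
\]
together with Borel's rational computation of $K_\ast(\Z)$ and with the integral refinements supplied by the Rost--Voevodsky proof of the Bloch--Kato conjecture and the Beilinson--Lichtenbaum conjecture show that $\HH^k_{mot}(\Spec\Z,p)$ is supported in cohomological degrees $k=1$ (Borel regulator classes) and $k=2$ (torsion), so that the groups vanish for $k\leqs 0$ and $p\geqs 2$. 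This is precisely the Beilinson--Soulé vanishing for the ring of integers.

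The only real obstacle in this argument is the last step, i.e.\ the integral Beilinson--Soulé vanishing for $\Spec(\Z)$ in weights $\geqs 2$; however, in the present context it is a result that can be cited rather than reproved, since it is one of the classical outputs of the computation of the $K$-theory of $\Z$. Everything else is a bookkeeping reduction using the identifications already recorded in Proposition \ref{motcoh} and Remark \ref{rem:Himot}.
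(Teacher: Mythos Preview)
Your argument is correct and rests on the same two inputs as the paper's proof: Borel's rational computation of $K_*(\Z)$ and the Beilinson--Lichtenbaum/Bloch--Kato conjecture (now Voevodsky's theorem) to pass from rational to integral coefficients. The only difference is packaging: the paper first deduces rational \eqref{BS} for $\Spec(\Q)$, observes that $\Spec(\Z)$ differs only in degree~$1$ weight~$1$, and then invokes Kahn's lemma (\cite[Lemma~24]{KahnAKTACAG}) as a black box converting rational \eqref{BS} plus Beilinson--Lichtenbaum into integral \eqref{BS}; you instead treat $p=0,1$ by hand via Remark~\ref{rem:Himot} and for $p\geqs 2$ unpack the same mechanism through the motivic Atiyah--Hirzebruch spectral sequence. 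Neither route is more general or more elementary; yours is slightly more explicit about where the groups actually sit, while the paper's is cleaner as a citation.
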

\begin{proof}
The work of Borel \cite{BorCAG} and Beilinson \cite{BeiHRVLF} and the comparison
between groups of $K$-theory 
and motivic cohomology through higher Chow groups, shows that with $\Q$
coefficients $\Spec(\Q)$ satisfies the \eqref{BS} property. The difference
between $\Spec(\Z)$ and $\Spec{\Q}$ is concentrated in degree $1$ weight $1$
where the latter has an extra generator for each prime.  Thus $\Spec(\Z)$
satisfies the \eqref{BS} property with $\Q$ coefficient. As reviewed in
\cite[lemma 24]{KahnAKTACAG}, the \eqref{BS} property with
$\Z$-coefficient is a consequence of the Beilinson-Soulé vanishing
property with $\Q$ coefficients together with the Beilinson-Lichtenbaum
conjecture \cite[Conjecture 17]{KahnAKTACAG} which is equivalent to the
Bloch-Kato conjecture (see. \cite{SusVoeBKCMC}). Thanks to the work of V. Voevodsky this
last conjecture is a now a theorem proved in  \cite{VoeMCZ-lC}. This concludes
our theorem.
\end{proof}
\begin{rem}\label{BSnumberfield} Let $\F$ be a number filed and 
$\mc P$ a set of finite place of $\F$. Note that similar arguments show that
  the Beilinson-Soulé vanishing property also holds when $S$ is the spectrum of
  the ring $\mc O_{\F, \mc P}$ of $\mc P$-integers of $\F$;  that is when 
\[
S=\Spec(\mc O_{\F, \mc P}).
\]
\end{rem}

\section{Geometry of moduli spaces $\mob[n]$}\label{sec:m0n}
Let $n$ be an integer greater or equal to three and $\mo[n]$ be the moduli
space of curves of genus $0$ with $n$ marked points  over $\Sp(\Z)$ and
$\mob[n]$ its Deligne-Mumford compactification \cite{DMISCGG,PMSCKnud}. The
integer $l=n-3$ is the dimension of $\m_{0,n}$ and the boundary $\partial
\mob[n]=\mob[n]\sm \mo[n] $ is a strict normal crossing divisor whose
irreducible components are isomorphic to $\mob[n_1]\times \mob[n_2]$ with
$n_1+n_2=n+2$. For $\F$ a number field, we shall write ${\mo[n]}_{/\F}$ and
${\mob[n]}_{/\F}$ when the moduli spaces are considered over $\Sp{\F}$ and when the
context needs to be precised. If $S$ is a finite set with $n$ elements, we will
write $\mo[S]$ and $\mob[S]$ for $\mo[n]$ and $\mob[n]$. Note that if $(\p^1)^{|S|}_*$
denotes the set of all $n$-tuples of distinct points $z_s \in \p^1$, for $s\in S$, then
\[
\mo[S]= \PSL_2 \backslash (\p^1)^{|S|}_*,
\]
where $\PSL_2$ is the algebraic group of automorphisms of $\p^1$ and acts by
Möbius transformations.

Let $S= \{1, \ldots, n\}$. Recall that for any subset $S'$ of $S$, there exists a
natural map 
\[
f_{S'} : \mo[S] \lra \mo[S']
\] 
obtained by forgetting the marked points of $S$ which do not lie in $S'$. This
maps extends to a proper morphism
\begin{equation}\label{forgetbar}
\ol{f_{S'}} = \mob[S] \lra \mob[S'].
\end{equation}
\subsection{On the boundary of $\mob[n]$}\label{subsec:boundary}
Let $D$ be a codimension $1$ irreducible component of $\partial \mob[n]$ and
$D_0$ its open strata:
\[
D_0=D\sm (\bigcup_{D' \neq D} D\cap D')
\] 
where the unions runs through the codimension $1$ irreducible components of
$\partial \mob[n]$ different from $D$.
The union 
\[ 
\mo[n]\cup D_0=\mob[n]\sm (\partial \mob[n] \sm D_0) 
\]
is denoted by $\moD[n]{D}$ and the normal bundle of $D_0$ in $\moD[n]{D}$ is denoted
by $N_{D_0}$. The goal of this section is to prove that $N_{D_0}$ is trivial.

Let $S$ denote the set $\{1, 
\ldots n\}$. The moduli space $\mob[S]$ admits a stratification (cf. \cite{}). The
open strata is simply $\mo[S]$. A point in a codimension $k$ strata
represents a stable curve with $n$ marked points which  is a tree of $\p^1$'s
(because of the genus $0$) with the $n$ marked points spread on the
branches such that each $\p^1$, that is each branch, has at least three special
points (marked points and intersection points). Moving into a strata makes the
marked points move into their branch 
but they can not move from one branch to another. Following Keel's work \cite{IMSCKeel}, an
 irreducible component $D$ of $\partial \mo[S]$ is
given by its open strata. A point in this open strata represents a tree of $\p^1$
with two branches, the marked points being spread on these two branches. Thus
$D$ gives a two partition of $S$ which determines the open stratum of $D$. $D$
is then given by a subset $T_D$ of $S$ such that $T_D$ and and its complements
$T_D^C$ have at least $2$ elements and $D$ is isomorphic to
$\mob[T\cup\{e\}]\times \mob[T^c\cup \{e\}]$ with $e$ not in $S$. 

We fix three elements $i_0,i_1,i_2$ in $S$. The
correspondence between codimension $1$ irreducible components of
$\partial\mob[n]$  and partition $J \sqcup J^c$ of $S$ is now made $1$-to-$1$
by imposing that  $|J \cap \{i_0,i_1,i_2\}|\leqs 1$. For such a $J$, the
corresponding component of $\partial \mob[n]$ is denoted by $D^{J}$. 

We shall use the following notations. When the ambient space should be pointed
out, we will write $D_S^J$ instead of $D^J$. The open stratum of $D^J$ is 
\[
D_{0, S}^J=D_{S}^J \sm (\cup_{D' \neq D_{S}^J} D_{S}^J\cap D')
\] 
where the union runs through the codimension $1$ irreducible components $D'$ of
$\partial mob[S]$ different from $D_S^J$. Following the above notation, the
union $\mo[S]\cup D_{0,S}^J$ is denoted by $\mob[S]^{D^J}$. Note that
\[
\mob[S]^{D^J}=\mob[S]\sm (\cup_{D' \neq D_{S}^J} D_{S}^J\cap D')
\] 
where the union runs as above.

We suppose that $n \geqs 5$, thus we can assume that $T_D^c$ has
at least $3$ elements. Let $I=\{i_0, i_1,i_2,i_3\}$ be a subset of $S$ such that
$i_0$ is in $T$ and  $i_1$, $i_2$ and $i_3$ are elements of $T^c$. Let $S_0$ be
$S\sm \{i_3\}$. 
We consider the morphism
\[
\pi_{S_0\times I} : \mob[S] \xrightarrow{\ol{f_{S_0}}\times \ol{f_I} }
\mob[S_0]\times \mob[I]. 
\]

\begin{lem}\label{pi(D0)}
Let $S=\{1,\ldots , n\}$, $T\subset S$, $I={i_0,i_1,i_2,i_3}$ and $S_0$ be as
above.  Then the image of $D^T_0$ by $\pi_{S_0\times I}$ satisfies:
\[
\pi_{S_0\times I}(D^T_0) \subset D^T_{0, S_0} \times \mo[4].
\]   
\end{lem}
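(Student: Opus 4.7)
The plan is to use the modular interpretation of points of $\mob[n]$ as stable genus $0$ marked curves, and to analyze each component of $\pi_{S_0 \times I} = \ol{f_{S_0}} \times \ol{f_I}$ separately. A point of the open stratum $D_0^T$ represents a stable curve with exactly one node: two $\p^1$-branches glued at a point $e$, carrying respectively the marked points indexed by $T$ together with $e$, and the marked points indexed by $T^c$ together with $e$.

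For the first projection $\ol{f_{S_0}}$, only the single marked point $i_3 \in T^c$ is forgotten. The $T$-branch is unaffected, and the $T^c$-branch retains $|T^c| - 1 \geqs 2$ marked points plus the node $e$, for a total of at least three special points, so it remains stable and no contraction is performed. The image in $\mob[S_0]$ is therefore a curve with two branches joined at a single node, realizing the partition $T \sqcup (T^c \setminus \{i_3\})$ (which still satisfies $|T \cap \{i_0,i_1,i_2\}| \leqs 1$), hence a point of the boundary divisor $D^T_{S_0}$. Since the starting curve had exactly one node and did not meet any other boundary divisor, and since forgetting a single marked point neither creates new nodes nor forces the image onto another component, the image in fact lies in the open stratum $D^T_{0, S_0}$.

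For the second projection $\ol{f_I}$, we forget every marked point outside $I = \{i_0, i_1, i_2, i_3\}$. On the $T$-branch only $i_0$ survives together with the node $e$, giving just two special points, so this branch is unstable and must be contracted; the node then becomes a smooth point on the surviving $\p^1$ inheriting the label $i_0$. The $T^c$-branch keeps $i_1, i_2, i_3$ and receives $i_0$ in place of the node, yielding a smooth $\p^1$ carrying the four marked points $i_0, i_1, i_2, i_3$. These four labels are pairwise distinct because $i_0$ originated on a different branch than $i_1, i_2, i_3$, so the image lies in $\mo[4]$ and not on its boundary. Combining the two projections gives the required inclusion $\pi_{S_0 \times I}(D_0^T) \subset D^T_{0, S_0} \times \mo[4]$. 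The argument is essentially bookkeeping of stability under forgetful morphisms; the only point needing care is that contracting the $T$-branch under $\ol{f_I}$ really produces four pairwise distinct marked points on the surviving $\p^1$, which is automatic from the way $T$ and $T^c$ distribute the $i_k$'s.
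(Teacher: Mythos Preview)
Your proof is correct and follows essentially the same approach as the paper: both analyze the two forgetful projections separately via the modular interpretation, arguing that under $\ol{f_{S_0}}$ both branches remain stable (so the image still has exactly one node and lies in the open stratum $D^T_{0,S_0}$), while under $\ol{f_I}$ the $T$-branch becomes unstable and is contracted, landing the image in $\mo[4]$. Your treatment is in fact more explicit than the paper's about the stability count on each branch and about why the four resulting marked points on the surviving $\p^1$ are pairwise distinct.
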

\begin{proof}
Let $P$ be a point in $D^T_{0}$. As $P$ is in the open strata, $P$ represents a
tree of $\p^1$ having only two branches with $n$ marked points spread on each
branch accordingly to the partition $T\sqcup T^c$ of $S$. The forgetful
morphisms make at worst the number of branches decrease. Hence $\ol{f_{S_0}}(P)$
  have at most $2$ branches and thus is at worst in the open strata of a
  codimension $1$ irreducible component of $\mob[S_0]$. In one hand, note that the as
  $|T|\geqs 2$ and $i_3\notin T$, 
  $\ol{f_{S_0}}(P)$ is in $D^T_{S_0}$ and thus is in $D_{0,S_0}^T$. In the other hand $T \cap
    \{i_0,i_1,i_2,i_3\}=\{i_0\}$ and the tree of $\p^1$ corresponding to $P$ can
  not remain stable under $\ol{f_I}$, thus $\ol{f_{I}}(P)$ represents a single
  $\p^1$ with $n$ marked points and thus is in  $\mo[4]$.
\end{proof}

\begin{prop}\label{NDtrivial}
Let $n$ be greater or equal to $4$ and let $D$ be a codimension $1$ irreducible
component of $\partial \mob[n]$. Then, the normal bundle $N_{D_0}$ is trivial.
\end{prop}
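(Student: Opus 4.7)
The plan is to prove this by induction on $n$, with Lemma \ref{pi(D0)} as the key geometric input.

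For the base case $n = 4$, one has $\mob[4] \simeq \p^1$ with three boundary points. The component $D = D_0$ is a single closed point, $\moD[4]{D} \simeq \A^1$, and the normal bundle of a point in $\A^1$ is canonically trivial.

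For the inductive step $n \geqs 5$, one picks $T$, $I = \{i_0, i_1, i_2, i_3\}$ and $S_0 = S \sm \{i_3\}$ as in Lemma \ref{pi(D0)}, using the freedom to swap $T$ and $T^c$ so that $|T^c| \geqs 3$; this guarantees $|T^c \sm \{i_3\}| \geqs 2$, so that $D^T$ remains a codimension $1$ boundary divisor of $\mob[S_0]$ and the induction hypothesis applies, giving triviality of the normal bundle of $D^T_{0, S_0}$ in $\moD[S_0]{D^T}$. The main step is then to show that $\pi_{S_0 \times I}$ restricts to an open immersion
\[
\moD[n]{D^T} \hra \moD[S_0]{D^T} \times \mo[4],
\]
under which $D_0^T$ is exactly the preimage of $D^T_{0, S_0} \times \mo[4]$. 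This verification is moduli-theoretic: given a point $(p, q)$ in the target, the position of the forgotten marked point $i_3$ on the source curve is uniquely determined by the cross-ratio $q$ of $(i_0, i_1, i_2, i_3)$---it lies on the $T^c$-branch when $p \in D^T_{0, S_0}$ and on the unique smooth $\p^1$ when $p \in \mo[S_0]$. Requiring that this reconstructed position not collide with any existing marked point or with the node cuts out an open subscheme of the target on which one builds an explicit inverse, and the dimension count $n - 3 = (n - 4) + 1$ together with smoothness of both sides upgrades this bijection to an open immersion.

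Granting this, the normal bundle transports as
\[
N_{D_0^T} \simeq \pi_{S_0 \times I}^{*} \on{pr}_1^{*}\, N_{D^T_{0, S_0}},
\]
and by the induction hypothesis the right-hand side is trivial. The delicate point is genuinely the verification that $\pi_{S_0 \times I}$ is an open immersion in a neighbourhood of $D_0^T$ and not merely a bijection on geometric points; a weaker \'etaleness statement would already suffice for the normal bundle comparison, so the main obstacle reduces to the fibral inversion sketched above, which can alternatively be extracted from Keel's presentation of $\mob[n]$ as an iterated blow-up of $(\p^1)^{n-3}$.
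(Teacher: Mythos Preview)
Your proposal is correct and follows the same inductive skeleton as the paper: the base case $n=4$, the choice of $T$, $I$, $S_0$, the use of Lemma~\ref{pi(D0)}, the identification of $\pi_{S_0\times I}|_{\moD[n]{D^T}}$ as an open immersion into $\moD[S_0]{D^T}\times\mo[4]$, and the pullback of the normal bundle to reduce to the inductive hypothesis.

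The one substantive difference is in how the open-immersion step is justified. The paper does not construct the moduli-theoretic inverse you sketch; instead it invokes Keel's result \cite[Lemma~1]{IMSCKeel} that $\pi_{S_0\times I}:\mob[S]\to\mob[S_0]\times\mob[I]$ is a composite of blow-ups whose exceptional divisors are exactly the $D^{J\cup\{i_3\}}$ with $J\subset S_0$, $|J|\geqs 2$. Since $i_3\notin T$, the divisor $D^T$ is not among these, so $\pi_{S_0\times I}$ is already an isomorphism on $\moD[S]{D^T}$. This is cleaner than your hands-on inverse construction (and sidesteps the scheme-theoretic care needed to upgrade a pointwise bijection to a morphism), and it is precisely the ``Keel's presentation'' alternative you mention at the end of your proposal. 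Either route works; the paper's is shorter because the blow-up description does the bookkeeping for you.
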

\begin{proof}
The proof proceed by induction on $n$ and is clear for $n=4$. 

Assuming that $n \geqs 5$, we write as above $S=\{1, \ldots n n\}$ and $D$ as
$D^T$ for some $T\subset S$. The cardinal of $S$ being at least $5$, we can
assume that $|T| \geqs 2$ and that $|T^c|\geqs 3$. In order to have a $1$-to-$1$
correspondence between codimension $1$ irreducible component of $\partial
\mob[S]$ and $S$ partition as described above, we chose $i_0$ in $T$ and $i_1$
and  $i_2$ in $T^c$. $T^c$ having at least three elements, we chose there a
third elements $i_3$ different from $i_1$  and $i_2$.  

 Keel's work \cite[Lemma 1]{IMSCKeel} shows that the morphism
\[
\pi_{S_0\times I} : \mob[S] \xrightarrow{\ol{f_{S_0}}\times \ol{f_I} }
\mob[S_0]\times \mob[I] 
\]
is given by a succession of blow-ups along regular, smooth, codimension
$2$ subschemes and whose exceptional divisors are codimension $1$ irreducible
components of $\partial \mob[n]$ of the form :
\[
D^{J\cup \{3\}} \qquad \mx{with } \left\{ 
\begin{array}{l} J \subset S_0, \quad |J| \geqs 2\\
  \mx{and } |J\cap \{i_0,i_1,i_2\}|\leqs 1.
\end{array} \right. 
\]

In particular, $\pi_{S_0\times I}$ is an isomorphism outside the
exceptional divisor. Hence, the image of $\moD[S]{D^T}$ by  $\pi_{S_0\times I}$
is an open of  
$\mob[S_0] \times \mob[4]$. 

As $T$ is also a subset of $S_0$, let  $D^T_{S_0}$ be the corresponding
codimension $1$ component of $\partial \mob[S_0]$ and $D_{0, S_0}^T$ its open
stratum. Lemma \ref{pi(D0)} above shows that
\[
\pi_{S_0\times I}(D^T_0) \subset D_{0,S_0}^T \times \mo[4].
\]

Thus, the image of $\pi_{S_0\times I}(\moD[S]{D^T})$ is open in $\mob[S_0]\times
\mob[4]$ and is included in
$\moD[S_0]{D^T}\times \mob[4]$ which is also open in $\mob[S_0]\times
\mob[4]$. As a consequence, $\pi_{S_0\times I}(\moD[S]{D^T})$ is open in
$\moD[S_0]{D^T}\times \mob[4]$. 
 
The morphism $\pi_{S_0\times I}$ being an isomorphism away from the exceptional
divisor, the triviality of $N_{D^T_0}$ in $\moD[S]{D^T}$ is  equivalent to the
triviality of the normal bundle of $\pi_{S_0\times I}(D^T_0)$ in $\pi_{S_0\times
  I}(\moD[S]{D^T})$ which by the above discussion is a consequence of the
triviality of $N_{D^T_{0,S_0}}$ in $\moD[S_0]{D^T_{0,S_0}}$. The Proposition follows by
induction, the case $\mob[4]\simeq \p^1 $ with $\partial \mob[4]\simeq \{0, 1,
\infty\}$ being trivial.

\end{proof} 
\subsection{The motives of $\mob[n]$}
Now, $S=\Spec(\Z)$. The main goals of this subsection is to prove that the
motive $M_S(\mob[n])$ is a (finite) direct sum of motives of the type
$\Z_S(p)[2p]$ with $p\geqs 0$ and satisfy \eqref{BS} property (see Theorem
\ref{MobnTBS}).

The key points are the decomposition of $M_S(\mob[n])$ into Tate motives and the
Beilinson-Soulé property for the base scheme $S=\Spec(\Z)$.

\begin{defn} Let $X$ be a smooth scheme over $S$. We say that $X$ is
  \emph{effective of type Tate type $(p,2p$)} or simply of type $ET$ when
  $M_S(X)$ is a finite direct  sum of motives $\Z_S(p_i)[2p_i]$ with $p\geqs 0$.
\end{defn}

A direct application of the blow-up formula (Proposition \ref{Blup}) gives the
following.
\begin{lem}\label{lem:blET}Let $X$ be a smooth scheme over $S$ and $Z$ a smooth
  closed subscheme 
  of $X$. We assume that both $X$ and $Z$ are of type $ET$. Then the blow-up
  $\Bl_Z(X)$ of $X$ with center $Z$ is also of type $ET$. 
\end{lem}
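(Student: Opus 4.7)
The plan is to apply the blow-up formula from Proposition \ref{Blup} directly and observe that every summand it produces is still of the required form, closed under taking finite direct sums and under non-negative Tate twists.

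More precisely, since $Z$ is smooth and closed in $X$, I would first reduce to the case where $Z$ has pure codimension in $X$ by decomposing $Z$ into its (finitely many) connected components $Z = \bigsqcup_\alpha Z_\alpha$; each $Z_\alpha$ is itself smooth with $M_S(Z_\alpha)$ a direct summand of $M_S(Z)$, hence of type $ET$, and $\Bl_Z(X)$ can be described by iteratively blowing up the $Z_\alpha$'s along their strict transforms (still smooth, still of type $ET$ by Proposition \ref{Blup} applied inductively). So without loss of generality $Z$ is of pure codimension $n$ in $X$.

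Now Proposition \ref{Blup} gives
\[
M_S(\Bl_Z(X)) \simeq M_S(X) \oplus \bigoplus_{i=1}^{n-1} M_S(Z)(i)[2i].
\]
By hypothesis $M_S(X) \simeq \bigoplus_j \Z_S(p_j)[2p_j]$ and $M_S(Z) \simeq \bigoplus_k \Z_S(q_k)[2q_k]$ with all $p_j, q_k \geqslant 0$. Therefore each twisted and shifted summand $M_S(Z)(i)[2i]$ decomposes as $\bigoplus_k \Z_S(q_k + i)[2(q_k + i)]$ with $q_k + i \geqslant 1 > 0$, and the total right-hand side is a finite direct sum of motives $\Z_S(p)[2p]$ with $p \geqslant 0$. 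This is precisely the $ET$ property for $\Bl_Z(X)$.

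There is no real obstacle here: the key inputs are the blow-up formula, stability of the collection of non-negative Tate-type direct sums under finite sums and under the operation $(-)(i)[2i]$ for $i \geqslant 0$, and the mild bookkeeping needed to reduce to constant codimension. The lemma will be used systematically in the following subsections to propagate the $ET$ property through the stratification of $\mob[n]$, so keeping the argument strictly formal (purely on the level of the decomposition into $\Z_S(p)[2p]$) is what is wanted.
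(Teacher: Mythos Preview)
Your proof is correct and follows exactly the paper's approach: the paper simply states that the lemma is ``a direct application of the blow-up formula (Proposition \ref{Blup}),'' and you have spelled this out, including the harmless reduction to pure codimension and the observation that the twisted summands $\Z_S(q_k+i)[2(q_k+i)]$ remain of type $ET$.
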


\begin{thm}\label{MobnTBS} Let $n$ be an integer greater or equal to $3$. The motive
  $M_S(\mob[n])$ is isomorphic to
\[
M_S(\mob[n])=\oplus_i \Z_S(p_i)[2p_i]
\]
where the above sum is finite and the $p_i$ are positive (or zero). Moreover
$M_S(\mob[n])$ satisfy \eqref{BS}; that 
is for any integers $p$ and $k$ such that $p\geqs 0$ and $k< 0$, or $p>0$ and
$k=0$ one has  
\[ 
\Hom_{\SH(S)}(\Ssp[]{\mob[n]}{\infty}), \MZ S(p)[k])
=\Hom_{\DM (S) }(M_S(\mob[n]), \Z_S(p)[k])=0
\]
\end{thm}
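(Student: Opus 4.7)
The plan is to prove both assertions simultaneously by induction on $n$, combining the recursive structure of the Deligne-Mumford tower provided by Keel's theorem with the blow-up formula (Proposition \ref{Blup}) in its effective Tate form (Lemma \ref{lem:blET}). The base cases are immediate: for $n=3$, $\mob[3]=\Sp(\Z)$ so $M_S(\mob[3])=\Z_S(0)$; for $n=4$, $\mob[4]\simeq\p^1$ and the projective bundle formula (Proposition \ref{projbundle}) yields $M_S(\mob[4])=\Z_S(0)\oplus\Z_S(1)[2]$. Both are therefore of type $ET$.

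For $n\geqs 5$, I would exploit the morphism
\[
\pi_{S_0\times I} : \mob[S] \lra \mob[S_0]\times \mob[I]
\]
introduced in Section \ref{subsec:boundary}, where $|S_0|=n-1$ and $|I|=4$. Keel's lemma, already recalled in the proof of Proposition \ref{NDtrivial}, asserts that this morphism is an iterated sequence of blow-ups along smooth regular codimension $2$ subschemes. The target $\mob[S_0]\times\mob[I]\simeq\mob[S_0]\times\p^1$ is of type $ET$: $\mob[S_0]$ is so by the induction hypothesis, $\p^1$ is so by the projective bundle formula, and type $ET$ is closed under products, since motives of smooth products are tensor products of motives and $\Z_S(p)[2p]\otimes\Z_S(q)[2q]\simeq\Z_S(p+q)[2(p+q)]$. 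The centers of the successive blow-ups are (strict transforms of) intersections of boundary divisors, which are built from products $\mob[k_1]\times\mob[k_2]$ with $k_1,k_2<n$; by the induction hypothesis, they too are of type $ET$. Iterated application of Lemma \ref{lem:blET} then yields the desired decomposition for $M_S(\mob[n])$.

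For the Beilinson-Soul\'e property, once the decomposition
\[
M_S(\mob[n])=\bigoplus_i \Z_S(p_i)[2p_i], \qquad p_i\geqs 0,
\]
is established, one has by additivity of $\Hom$ on direct sums
\[
\Hom_{\DM(S)}(M_S(\mob[n]),\Z_S(p)[k])=\bigoplus_i \Hom_{\DM(S)}(\Z_S(0),\Z_S(p-p_i)[k-2p_i]).
\]
Each summand vanishes under the hypotheses $p\geqs 0,\ k<0$ or $p>0,\ k=0$: the case $p-p_i<0$ is Remark \ref{rem:Himot}; if $p-p_i=0$, then $k-2p_i\neq 0$ (either $k<0$, or $k=0$ and $p_i=p>0$, giving $k-2p_i<0$), so Remark \ref{rem:Himot} again applies; the remaining cases $(p-p_i>0,\ k-2p_i<0)$ or $(p-p_i>0,\ k-2p_i=0)$ fall under \eqref{BS} for $\Sp(\Z)$, which holds by Theorem \ref{BSZ}.

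The main obstacle is verifying that the centers appearing in Keel's iterated blow-up description are themselves of type $ET$. Geometrically they should be (iterated blow-ups of) products of smaller $\mob[k]$'s along their own boundary, and the delicate point is to track strict transforms of boundary divisors through each stage of the iteration so that the induction hypothesis can be applied at every intermediate step. The description of boundary strata recalled in Section \ref{subsec:boundary}, together with Keel's original analysis, should provide exactly the input needed.
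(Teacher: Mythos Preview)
Your proposal is correct and follows essentially the same route as the paper: induction on $n$ via Keel's iterated blow-up description of $\mob[n]\to\mob[n-1]\times\p^1$, combined with K\"unneth and the blow-up formula (Lemma \ref{lem:blET}); the Beilinson--Soul\'e part you spell out is exactly the content of Lemma \ref{BSZpk}. Your flagged obstacle dissolves once you invoke Keel's result in its precise form (as the paper does, citing \cite[Theorems 1 and 2]{IMSCKeel}): the centers at each intermediate stage $B_{k+1}\to B_k$ are already identified as isomorphic to irreducible boundary components of $\partial\mob[n-1]$, hence to products $\mob[n_1]\times\mob[n_2]$ with $n_1+n_2=n+1$, so no tracking of strict transforms is needed and the induction hypothesis applies directly.
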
 

\begin{proof}
Note that the second part of the theorem follows directly from the first part
using Lemma \ref{BSZpk} below.

The proof of the first part is by induction on $n$. 

Note that $\mob[3]$ is  isomorphic to  $S=\Spec(\Z)$ and
$\mob[4]$ is simply $\p^1_S$. Hence, using the \eqref{BS} property for
$S=\Spec(\Z)$ (Lemma \ref{BSZ}) and the projective bundle formula, $\mob[3]$ and
$\mob[4]$ are of type $ET$.

Fix $n \geqs 5$. Let $I_n$ denotes the set $\{1, \ldots, n\}$ (denoted by $S$ in
the previous section). Keel in \cite[Theorem 1 and 2]{IMSCKeel} proves that the morphism
\[
\mob[I_n] \lra \mob[I_{n-1}]\times \mob[I_4]
\]
is a sequence of blow-ups 
\[
\mob[I_n] \st{\sim}{\ra} B_{n-3} \ra \cdots \ra B_k 
\ra \cdots  \ra B_1 =  \mob[I_{n-1}]\times \mob[I_4]
\]
where $B_{k+1}\lra B_k$ is the blow-up along disjoints centers isomorphic to
some irreducible components of $\partial \mob[I_{n-1}]$.

As $B_1 \simeq \mob[I_{n-1}]\times \mob[I_4]$, the induction hypothesis and
Künneth formula show that $B_1$ is of type $ET$. Note that irreducible
components of $\mob[I_{n-1}]$ are isomorphic to $\mob[n_1]\times \mob[n_2]$ with
$n_1+n_2=n+1$. Thus Künneth formula and the induction hypothesis show that the
centers of the blow-up $B_{k+1}\ra B_{k}$ are also of type $ET$.

 Now, using a proof similar to the proof of Proposition 4.4 in \cite{SouMDS}, an
 induction on $k$ and the blow-up formula prove that $B_k$ is of type 
 $ET$ for all $k$. Hence $\mob[I_n]\simeq B_{n-3}$ is also of type $ET$. 
\end{proof}
Note that \cite{SouMDS} use a cohomological setting which explains the minus
signs in shift and twist. One could by-pass part of Keel's result in
\cite{IMSCKeel} by remarking that the map
\[
\mob[I_n]\lra \prod_{i=4}^n\mob[\{1,2,3,i\}]
\] 
crashes down all irreducible components of the form $D^T$ with $|T\cap
\{1,2,3\}|\leqs 1$ and $|T| \geqs 3$. Normalizing the marked point $z_1$, $z_2$
and $z_3$ to $1$ $\infty$ and $0$ respectively, one obtains that  $\mob[I_n]$ is
the results of blowing up $(\p^1)^{n-3}$ along the poset given by the
all the intersections of the divisors $t_i=t_j$ and $t_i=\ve$ with $i\neq j$ and
$\ve=0,1,\infty$. This is exactly the situation of \cite[Proposition
4.4]{SouMDS} and its proof has to be modified each times it uses the blow-up
formula in order to take into account the $ET$ type property.
 
\begin{lem}\label{BSZpk} Let $X$ be a smooth scheme over $S$. Assume that $X$ is
  effective of 
  type Tate $(p,2p)$, then $M_S(X)$ satisfy the Beilinson-Soulé property \eqref{BS}.
\end{lem}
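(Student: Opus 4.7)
The plan is straightforward bookkeeping on the hypothesis that $M_S(X) = \bigoplus_i \Z_S(p_i)[2p_i]$ is a finite direct sum with $p_i \geqs 0$. Since the Tate object is $\ot$-invertible in $\DM(S)$, applying $\Hom(-, \Z_S(p)[k])$ to this decomposition gives
\[
\Hom_{\DM(S)}(M_S(X), \Z_S(p)[k]) \;=\; \bigoplus_i \Hom_{\DM(S)}(\Z_S(0), \Z_S(p-p_i)[k-2p_i]).
\]
By Proposition \ref{motcoh} each summand on the right is a motivic cohomology group $\HH^{k-2p_i}_{mot}(S, p-p_i)$ of $S = \Spec(\Z)$.

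It then suffices to show that each summand vanishes in the two cases of \eqref{BS}. First, I would split on the sign of the weight $p - p_i$: if $p - p_i < 0$, the group vanishes immediately by Remark \ref{rem:Himot}. When $p - p_i \geqs 0$, I would check the remaining cases: in case $p \geqs 0$, $k < 0$, we have $k - 2p_i < 0$ (because $p_i \geqs 0$), so vanishing follows from the \eqref{BS} property for $\Spec(\Z)$ (Theorem \ref{BSZ}); in case $p > 0$, $k = 0$, when $p_i = 0$ one needs $\HH^0_{mot}(S, p) = 0$ for $p > 0$, which is again part of \eqref{BS} for $\Spec(\Z)$, and when $p_i > 0$ one has $k - 2p_i < 0$ and $p - p_i \geqs 0$, which is once more \eqref{BS} for $\Spec(\Z)$.

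There is no real obstacle here: Theorem \ref{BSZ} provides the base vanishing, Remark \ref{rem:Himot} handles negative weights, and the $ET$-decomposition reduces the statement for $X$ to these two inputs. The only care needed is to enumerate the cases so that every pair $(p - p_i, k - 2p_i)$ occurring after the shift lies in the vanishing range provided by one of these two facts.
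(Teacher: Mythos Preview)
Your proof is correct and follows essentially the same approach as the paper: decompose $M_S(X)$ into its Tate summands, shift to reduce each term to a $\Hom$ from $\Z_S(0)$, and then invoke Remark~\ref{rem:Himot} for negative (or zero) weight and Theorem~\ref{BSZ} for the remaining cases. Your case analysis is in fact slightly more careful than the paper's, since you explicitly isolate the subcase $p_i = 0$, $k = 0$, $p > 0$ (where $k - 2p_i = 0$ rather than $< 0$) and correctly appeal to the second clause of \eqref{BS} for $\Spec(\Z)$.
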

\begin{proof}
By definition, $M_S(X)$ is a direct sum of Tate motives of the form
$\Z_S(i)[2i]$ for $i\geqs 0$. Using Proposition \ref{motcoh}, in order to show
that $M_S(X)$ satisfies 
\eqref{BS},  it then is enough to show that,  for any $i\geqs 0$ and for
any $p\geqs 0$ and 
any $k< 0$  or any $p>0$ and $k=0$, one has
\[
\Hom_{\DM(S)}(\Z_S(i)[2i],\Z(p)[k])=0.
\]
However the above $\Hom$ group is simply
\[
\Hom_{\DM(S)}(\Z_S(0), \Z(p-i)[k-2i]).
\]
If $p-i$ is less or equal to $0$, one can use Remark
\ref{rem:Himot}. When
$p-i>0$,  then the results follows from the \eqref{BS} property of $S=\Spec(\Z)$
given at Theorem \ref{BSZ} because $k-2i<0$. 
\end{proof}

\begin{coro}\label{D-M0n-BS} Let $n \leqs 4$ and $D$ an irreducible component of
  $\partial \mob[n]$. Then, $D$ is of type $ET$ and satisfies the \eqref{BS}
  property.

Moreover, if $\mc S$  is  a non empty intersection of $k$ irreducible
codimension $1$ components of
  $\partial \mob[n]$, then $\mc S$ is of type $ET$ and satisfies the \eqref{BS}
  property.    
\end{coro}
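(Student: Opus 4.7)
The plan is to argue by induction on $n$ using the well-known recursive structure of the boundary stratification of $\mob[n]$, together with Theorem \ref{MobnTBS} and Lemma \ref{BSZpk}. (The hypothesis in the statement should read $n \geqs 4$; for $n=3$ the boundary is empty, and for $n=4$ the boundary is simply three $S$-points, each of which trivially satisfies both properties.)

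First, I would observe that the class of smooth $S$-schemes of type $ET$ is stable under taking fibre products over $S$. Indeed, the tensor structure of $\DMZ S$ yields a K\"unneth-type isomorphism $M_S(X\times_S Y)\simeq M_S(X)\ots M_S(Y)$ for smooth $S$-schemes $X,Y$, so if $M_S(X)=\bigoplus_i \Z_S(p_i)[2p_i]$ and $M_S(Y)=\bigoplus_j \Z_S(q_j)[2q_j]$, then
\[
M_S(X\times_S Y)\simeq \bigoplus_{i,j}\Z_S(p_i+q_j)[2(p_i+q_j)],
\]
which is again of type $ET$. Combined with Lemma \ref{BSZpk}, any such product automatically satisfies \eqref{BS}.

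Next, for the first assertion, recall from Section \ref{subsec:boundary} that any codimension $1$ irreducible component $D$ of $\partial\mob[n]$ is isomorphic to $\mob[n_1]\times \mob[n_2]$ with $n_1+n_2=n+2$ and $n_1,n_2\geqs 3$, hence $n_1,n_2\leqs n-1 < n$. By the induction hypothesis applied to Theorem \ref{MobnTBS}, each factor $\mob[n_i]$ is of type $ET$, so the preceding remark gives $D$ of type $ET$, and then Lemma \ref{BSZpk} shows that $D$ satisfies \eqref{BS}.

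For the second assertion, I would invoke the standard description of boundary strata of $\mob[n]$ in terms of stable trees (due to Knudsen and reviewed by Keel): a non-empty intersection $\mc S$ of $k$ distinct irreducible codimension-$1$ boundary components is smooth and canonically isomorphic to a product
\[
\mc S\simeq \prod_{v\in V}\mob[n(v)]
\]
indexed by the vertices $V$ of a stable tree $\Gamma$ with $k$ internal edges and $n$ external legs, where each $n(v)\geqs 3$ and $\sum_v n(v) = n+2k$. Since $|V|=k+1\geqs 2$ and every $n(v)\geqs 3$, one has $n(v)\leqs n+2k-3k = n-k < n$ for every vertex, so the induction hypothesis applies to each factor. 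The stability of type $ET$ under products then gives that $\mc S$ is of type $ET$, and Lemma \ref{BSZpk} concludes. The main technical point I expect is simply to invoke cleanly the tree-indexed description of the intersection strata (and the corresponding isomorphism to a product of smaller moduli spaces); once this is granted, the motivic content is a direct combination of Theorem \ref{MobnTBS}, the K\"unneth-type computation, and Lemma \ref{BSZpk}.
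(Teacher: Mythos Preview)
Your proposal is correct and follows essentially the same approach as the paper: both arguments reduce to the product decomposition $\mc S\simeq \mob[l_1+3]\times\cdots\times\mob[l_{k+1}+3]$, the K\"unneth formula, Theorem \ref{MobnTBS}, and Lemma \ref{BSZpk}. The only superfluous element is your induction on $n$: since Theorem \ref{MobnTBS} is already established for \emph{all} $n\geqs 3$, you may apply it directly to each factor $\mob[n_i]$ without any inductive hypothesis.
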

\begin{proof}

The closed strata $\mc S$ is isomorphic to a product 
\[
\mob[l_1+3]\times \mob[l_2+3] \times \cdots \times  \mob[l_{k+1}+3]
\] 
with $l_1+l_2+\cdots +l_{k+1}=n-3-k$ (see \cite{PSEMC}). Then the corollary  is
a direct consequence 
of Künneth formula and Theorem \ref{MobnTBS}.
\end{proof}
\subsection{The motive of $\mo[n]$} The motives of the open moduli spaces of
curve $\m_{0,n}$ are mixed Tate and satisfy the \eqref{BS} property. This is proved in the
following section.

First we recall some facts about the boundary of $\mob[n]$ and its stratified
structure. It has already been remarked that $\partial \mob[n]=\mob[n]\sm
\mo[n]$ is a normal 
crossing divisor (cf. \cite[Theorem. 2.7]{PMSCKnud}). Let $\bar S$ be the
intersection of $k$ irreducible codimension $1$ components of $\partial
\mob[n]$.  
Then $\bar S$ is
isomorphic to the product of $k+1$ moduli spaces of curves
\[
\bar S \simeq \mob[n_1] \times \cdots \times \mob[n_{k+1}]
\]
such that $\sum_{i=1}^{k+1} (n_i-3)=n-3-k$. 

Writing $\partial \mob[n]$ as the union of its irreducible components 
\[
\partial \mob[n]=\bigcup_{i=1}^{N} D_i,
\]
one can assume that $\bar S=\cup_{i=1}^k D_i$.
The open strata $\mathring S$ is defined as 
\[
\mathring S= \bar S \sm \left(S\cap\left( \bigcup_{i=k+1}^N D_i \right) \right)
\]
and is isomorphic to 
\[\
\bar S \simeq \mo[n_1] \times \cdots \times \mo[n_{k+1}].
\]
\begin{thm}\label{thm:M0nMTMBS}
Let $n$ be an integer greater or equal to $3$. The motive
  $M_S(\mo[n])$ is in $\DMT_{/S, \Z}(S)$ the triangulated category of mixed Tate
  motives. Moreover the motive 
$M_S(\mo[n])$ satisfies \eqref{BS}; that 
is  one has 
\[ 
\Hom_{\SH(S)}(\Ssp[]{\mo[n]}{\infty}), \MZ S(p)[k])
=\Hom_{\DM (S) }(M_S(\mo[n]), \Z_S(p)[k])=0
\]
for all $p\geqs 0$ and $k<0$ and for all $p>0$ and $k=0$. 
\end{thm}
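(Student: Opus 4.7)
The plan is to peel off the irreducible components of $\partial\mob[n]$ one at a time using the Gysin triangle of Proposition \ref{GTci}, starting from $\mob[n]$ (mixed Tate with \eqref{BS} by Theorem \ref{MobnTBS}) and arriving at $\mo[n]$. To make the induction close, I would first reformulate the theorem as the following \emph{strengthened statement}: for every $n\geqs 3$ and every union $Z$ of codimension-$1$ irreducible components of $\partial\mob[n]$, the motive $M_S(\mob[n]\sm Z)$ lies in $\DMT_{/S, \Z}(S)$ and satisfies \eqref{BS}. The cases $n=3$ ($\mob[3]=\Spec(\Z)$, no boundary) and $n=4$ (iterated Gysin on $\p^1$ against $\{0\}$, $\{1\}$, $\{\infty\}$) are immediate, and the $Z=\emptyset$ case of the outer induction is Theorem \ref{MobnTBS}.

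For the inductive step, assume the strengthened statement for all $m<n$ (outer induction) and within each $n$ induct on the cardinality $|Z|$ (inner induction). Write $Z=Z'\cup D$ with $D$ a codimension-$1$ irreducible component not contained in $Z'$. Since $\partial\mob[n]$ is an SNCD, $D\sm(D\cap Z')$ is smooth and closed of codimension $1$ in the smooth scheme $\mob[n]\sm Z'$, so Proposition \ref{GTci} gives a distinguished triangle
\[
M_S(\mob[n]\sm Z) \lra M_S(\mob[n]\sm Z') \lra M_S(D\sm(D\cap Z'))(1)[2] \lra M_S(\mob[n]\sm Z)[1].
\]
Keel's description of the boundary identifies $D\simeq \mob[n_1]\times\mob[n_2]$ with $n_1+n_2=n+2$ and $n_i<n$, and each $D\cap D'$ for $D'\in Z'$ is either empty or a boundary divisor of this product of the form $D_{1,j}'\times\mob[n_2]$ or $\mob[n_1]\times D_{2,k}'$. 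Taking the complement therefore factors as
\[
D\sm(D\cap Z') \simeq (\mob[n_1]\sm A_1)\times (\mob[n_2]\sm A_2),
\]
with $A_i$ a union of codimension-$1$ boundary components of $\mob[n_i]$. By the outer induction and the K\"unneth formula, this product is mixed Tate and satisfies \eqref{BS}.

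The mixed Tate conclusion for $M_S(\mob[n]\sm Z)$ then follows from the Gysin triangle since $\DMT_{/S, \Z}(S)$ is a triangulated subcategory. For \eqref{BS}, I would apply $\Hom(-,\Z_S(p)[k])$ to the Gysin triangle and extract the long exact sequence: the vanishing of $\Hom(M_S(\mob[n]\sm Z),\Z_S(p)[k])$ for $(p,k)$ in the \eqref{BS} range reduces to the vanishing of $\Hom(M_S(\mob[n]\sm Z'),\Z_S(p)[k])$ (inner induction) together with $\Hom(M_S(D\sm(D\cap Z')),\Z_S(p-1)[k-1])$, which is covered by the \eqref{BS} already established for the product when $p-1>0$ and by Remark \ref{rem:Himot} in the borderline cases $p-1\leqs 0$ (direct degree tracking shows the indices always fall in the vanishing region). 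The main obstacle is precisely the need for the strengthened formulation: without admitting partial openings, peeling a second divisor $D_2$ after $D_1$ forces one to analyse $D_2\sm(D_1\cap D_2)$, an open subscheme of a product of $\mob[m]$'s rather than such a product itself, so neither Theorem \ref{MobnTBS} nor the naive inductive hypothesis applies; Keel's product description makes this partial opening factor as a product of partial openings of smaller $\mob[m]$'s, at which point K\"unneth together with the outer induction take over and the argument closes cleanly.
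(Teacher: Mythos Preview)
Your proof is correct and follows essentially the same route as the paper: peel off boundary divisors one by one via the Gysin triangle, using a double induction so that the excised piece $D\sm(D\cap Z')$ is covered by the inductive hypothesis. The only packaging difference is that the paper abstracts this step into a general statement (Theorem~\ref{thm:NCD-BSprop}) for any smooth projective $X_0$ with an SNCD whose closed strata are mixed Tate and satisfy \eqref{BS}, inducting on $\dim X_0$ rather than on $n$; it then remarks explicitly that in the $\mob[n]$ case one could instead run the induction purely within the moduli-space tower, which is precisely your version using Keel's product description and K\"unneth.
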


This statement holds in a more general situation. Let $X_0$ be a smooth
projective scheme
over $S$ whose motive $M_S(X_0)$ is in   $ \DMT_{/S, \Z}(S)$ and satisfies
\eqref{BS}. Let
$Z_0=\cup_{i=1}^l Z_i$ a strict normal crossing divisor of $X_0$. Assume that any
irreducible components of any intersection of the $Z_i$'s has a motives in  $
\DMT_{/S, \Z}(S)$ and satisfies \eqref{BS}. Let $U=X_0\sm 
Z_0$. 
\begin{thm} \label{thm:NCD-BSprop}
$M_S(U)$ is in $ \DMT_{/S, \Z}(S)$ and satisfies \eqref{BS}
\end{thm}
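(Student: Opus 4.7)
The plan is to prove Theorem \ref{thm:NCD-BSprop} by induction on the number $l$ of irreducible components of the normal crossing divisor $Z_0$, stripping off components one at a time via the Gysin triangle of Proposition \ref{GTci}. For $l=0$ there is nothing to do since $U=X_0$. For the inductive step, write $Z' = Z_2 \cup \cdots \cup Z_l$ and $U' = X_0 \setminus Z'$. By the induction hypothesis applied to $X_0$ with the SNCD $Z'$ of $l-1$ components (whose intersection irreducible components satisfy the required properties as they are contained in intersections of the $Z_i$), the motive $M_S(U')$ lies in $\DMT_{/S,\Z}(S)$ and satisfies \eqref{BS}.

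Next I would verify that $M_S(Z_1 \cap U')$ also lies in $\DMT_{/S,\Z}(S)$ and satisfies \eqref{BS}. Since $Z_0$ is strict normal crossing, $Z_1$ is smooth and the family $(Z_1 \cap Z_j)_{j \geqs 2}$ is a SNCD on $Z_1$ with $l-1$ components. The hypothesis on arbitrary intersections of the $Z_i$ passes to this setting (and includes $Z_1$ itself, which is the intersection of a single component). Hence a second application of the induction hypothesis, this time on $Z_1$, shows that $M_S(Z_1 \cap U') = M_S(Z_1 \setminus \bigcup_{j\geqs 2}(Z_1 \cap Z_j))$ has the desired properties. Now the Gysin distinguished triangle of Proposition \ref{GTci} applied to the smooth closed codimension one embedding $Z_1 \cap U' \hookrightarrow U'$ reads
\[
M_S(U) \lra M_S(U') \lra M_S(Z_1 \cap U')(1)[2] \lra M_S(U)[1],
\]
and because $\DMT_{/S,\Z}(S)$ is a triangulated subcategory of $\DMZ{S}$ closed under extensions, we deduce that $M_S(U) \in \DMT_{/S,\Z}(S)$.

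To obtain \eqref{BS} for $M_S(U)$, I would apply $\Hom_{\DM(S)}(-,\Z_S(p)[k])$ to the Gysin triangle above. The resulting long exact sequence gives
\[
\Hom(M_S(U'), \Z_S(p)[k]) \lra \Hom(M_S(U), \Z_S(p)[k]) \lra \Hom(M_S(Z_1 \cap U'), \Z_S(p-1)[k-2]).
\]
For $p \geqs 1$ and $k < 0$, or $p > 0$ and $k = 0$, both outer groups vanish by \eqref{BS} for $U'$ and for $Z_1 \cap U'$ respectively. The only delicate case is $p = 0$, $k < 0$, where the right-hand term is $\Hom(M_S(Z_1 \cap U'), \Z_S(-1)[k-2])$; this vanishes by Remark \ref{rem:Himot}, since the weight is strictly negative (and $Z_1 \cap U'$ decomposes as a finite disjoint union of smooth irreducible $S$-schemes, to which the remark applies termwise).

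The argument is essentially formal once the Gysin triangle is in hand; the one subtle point is the bookkeeping in the $p = 0$ edge case of \eqref{BS}, which requires the independent vanishing input from Remark \ref{rem:Himot} on negative Tate twists of smooth $S$-schemes. Applied to $X_0 = \mob[n]$ with the boundary stratification, Corollary \ref{D-M0n-BS} supplies exactly the hypotheses on the intersections of the $Z_i$, so Theorem \ref{thm:M0nMTMBS} falls out as an immediate consequence.
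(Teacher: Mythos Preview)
Your proof is correct and follows essentially the same Gysin-triangle-plus-induction strategy as the paper. Two small points are worth noting.

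First, the paper runs a \emph{double} induction on $(\dim X_0,\,l)$ rather than on $l$ alone. The reason is that when you restrict to $Z_1$, the induced divisor $\bigcup_{j\geqs 2}(Z_1\cap Z_j)$ need not have only $l-1$ irreducible components: each $Z_1\cap Z_j$ is smooth but may be disconnected, so the component count can go up and a pure induction on $l$ does not literally cover the call on $Z_1\cap U'$. Inducting additionally on $\dim X_0$ (as the paper does) sidesteps this, since $\dim Z_1<\dim X_0$ regardless of how many components appear. Your argument is easily repaired this way.

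Second, a harmless index slip: in your long exact sequence the right-hand term should be $\Hom\bigl(M_S(Z_1\cap U'),\,\Z_S(p-1)[k-1]\bigr)$, not $[k-2]$, since $\Hom\bigl(M(1)[2],\Z_S(p)[k+1]\bigr)=\Hom\bigl(M,\Z_S(p-1)[k-1]\bigr)$. Your vanishing analysis (including the $p=0$ edge case via Remark~\ref{rem:Himot}) goes through unchanged with the corrected index.
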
 
\begin{proof} The proof is a double induction on the dimension $n$ of $X_0$ and $l$.

Let $Z'=\cup_{i=1}^{l-1}Z_i$, $X=X_0\sm Z'$. The intersection  $Z=Z_l\cap X$ is
of codimension $d=1$.  
The Gysin triangle \eqref{eq:GysinT} insures that $M_S(U)$ sits in the
distinguished triangle
\[
\lra M_S(U) \lra M_S(X) \lra M_S(Z)(d)[2d] \lra M_S(U)[1]\lra \cdots
\]
 
 Applying the $\Hom_{\DM(S)}$ functor, one gets and exact sequence
\begin{multline*}
\HH^k_{mot}(X, p) \lra \HH^k_{mot}(U,p)=\Hom_{\DM(S)}(M(U)[1],\Z_S(p)[k+1]) \\ 
\lra \Hom_{\DM(S)}(M_S(Z)(d)[2d], \Z_S(p)[k+1])=\HH^{k+1-2d}_{mot}(Z,p-n). 
\end{multline*}

When $n=1$ and $l=1$ $X=X_0$ and $Z=Z_1=Z_0$. Hence both are in
$ \DMT_{/S, \Z}(S)$ and satisfy \eqref{BS} which implies the theorem for $U$.  

When $n>1$ or $k>1$, by induction $X$ is in $ \DMT_{/S, \Z}(S)$ and satisfies
\eqref{BS}.  $Z=Z_l\cap X$ is equal to 
\[
Z=Z_l\sm \left( \bigcup_{i=1}^{l-1}Z_i \cap Z_l\right).
\] 
By induction, $Z$  is in $ \DMT_{/S, \Z}(S)$ and satisfies \eqref{BS}.
Thus, the above  exact sequence, induced by the Gysin triangle, implies the theorem for
$U$.
\end{proof}

\begin{proof}[Proof of Theorem \ref{thm:M0nMTMBS}] We apply Theorem
  \ref{thm:NCD-BSprop} to the case where  $X=\mob[n]$ and $Z_0=\partial
  \mob[n]$. In this case, Theorem \ref{MobnTBS} and Corollary
\ref{D-M0n-BS} insure that the hypothesis are satisfied. Note that is this case,
$Z$ is an open codimension $1$  stratum of the compactification, hence it is
isomorphic to a 
product of open moduli spaces of curves. One could do the induction only for the
moduli spaces of curves case.
\end{proof}

\begin{rem}
\begin{itemize}
\item Note that any strict normal crossing divisor $Z_0$ of $X_0$ induces a
  stratification of $X_0$ where the strata are given by irreducible components
  of the intersection of the $Z_i$'s. The above theorem remains valid under the
  same hypothesis when $U=X\sm (\cup_{i\in I} \bar{\mathscr  S_i})$ is the complement
  of a union of closed strata defined by the divisor $Z_0$; where the strata
  $\bar{ \mathscr S_i}$ have maximal dimension $d_i$ and $I$ is minimal a description
  of $U$. This remove some ambiguities in the choices of the strata. In this
  case, the proof goes by induction on the dimension of $X_0$, $d=\max(d_i)$ and
  the number $k$ of strata of dimension $d$. As above it follows from the Gysin
  triangle and the long exact sequence for the $\HH^k_{mot}$. Note that closed
  strata of dimension $0$ are disjoint and that open strata (i.e. closed strata minus
  closed strata of lower dimension)  of dimension $d$ are disjoint.
\item Duality and  Gysin morphism as explained in Section \ref{sec:spitweckmot}, and
  given by F. Déglise works \cite{AGTIIDeg},  gives relative motivic cohomology
  $\HH^k_{mot}(X \sm A ; B)$ where $X$ is 
  smooth projective, and $A$ and $B$ are two strict normal crossing divisors
  sharing no common irreducible components. This is explained by M. Levine in
  \cite{LevMM}.
\item The theorem also shows that for any dihedral structure $\delta$ on
  $\{1,\ldots, n\}$ 
   the motive $M_S(\mo[n]^{\delta})$
  of $\mo[n]^{\delta}$ (which was defined by F. Brown in  \cite[Section
  2.2]{BrownMZVPMS}) is also mixed Tate; that is a motive in $ \DMT_{/S,
    \Z}(S)$; and satisfies the \eqref{BS} property.
\end{itemize}
\end{rem}

\section{A motivic Grothendieck-Teichmüller group}
\label{sec:GTmotgen}

\newcommand{\Gb}{G^{\bullet}}
\newcommand{\Kb}{K^{\bullet}}
This section defines a ``derived'' integral motivic
Grothendieck-Teichmüller group over $\Z$: $\GT_{\Z}^{mot}(\Z)$. Here ``derived''
refers to the fact 
that the construction is based on the triangulated category $ \DMT_{/S,
  \Z}(S)$.  
M. Spitzweck's work gives for any $n \geqs 3$ an equivalence between $ \DMT_{/S,
  \Z}(S)$ and the perfect representations of a ``derived group'' $\Gb(\mo[n])$.  These
groups sit as middle terms in shorts exact sequences relating  $\Gb(\mo[n])$,
$\Gb(\mo[3])=\Gb(\Spec(\Z))$ and a ``geometric part'' $\Kb_n$. These exact sequences are
compatible with the natural morphisms in the tower of the $\mo[n]$ (forgetting
marked points and ``embedding of codimension $1$ component'').

$\GT_{\Z}^{mot}(\Z)$ is then defined as the automorphism of the tower given by
the $\Kb_n$. A non derived version will be presented in the next section using
rational coefficients where the $t$-structure is available. In this rational
context and working over the spectrum of a number field, 
M. Levine's work \cite{LEVTMFG} 
identifies the non derived part
of $\Kb_n$ with
Deligne-Goncharov motivic fundamental group $\pi_1^{mot}(\m_{0,n})$
\cite{DG}. Hence the description of $\Kb_n$ as a ``geometric part''.
\subsection{Tangential based points and normal bundle}
This section describes the last requirement for developing a motivic
Grothendieck-Teichmüller construction:
\begin{itemize}
\item A natural functor $\DMT_{/S, \Z}(X\sm Z) \lra \DMT_{/S, \Z}(N^0_Z) \lra
  \DMT_{\S, \Z}(Z)$ where $N_Z^0$ denotes the normal bundle of $Z$ in $X$ minus
  its zero section. This functor allows us to obtain a derived group morphism 
$G(\mo[k])\times G(\mo[n])\lra G(\mo[n])$ induced by the inclusion of an
irreducible component $D\simeq \mob[k]\times \mob[l]$ of $\mob[n]$ into
$\mob[n]$. This is a motivic version of the morphisms between fundamental groups
 presented in \cite{PSEMC} in the topological context  or in
 \cite{NakamuraCUMRGT} in the etale case. 
\item Motivic \emph{tangential base points} or \emph{motivic base points at
    infinity}. In general based points provide an augmentation to differential graded
  ($E_{\infty}$) algebras  underlying the description of mixed Tate categories
  as comodule categories, more specifically when one deals with the relative
  situation \cite{LEVTMFG}. They also  give sections in the (derived) group setting
  to the morphism  $p^*: \DMT_{/S, \Z}(S) \lra \DMT_{/S,\Z}(X)$ induced by the
  structural morphism $p: X \ra S$. Tangential based points are used to
  compensate the lack of $S$-points (such as in the case of $\ps$ over
  $\Spec(\Z)$) and to preserve symmetries.
\end{itemize}

Let $n$ be an integer greater or equal to $4$.
Let $D$ be an irreducible component of $\partial \mob[n]$ and $D_0$ its open
strata (cf. Section \ref{subsec:boundary}). The open strata is isomorphic to 
 \[
D_0\sim \mo[n_1]\times \mo[n_2]
\]
with $n_1+n_2=n+2$. 

\begin{prop}\label{divisorfunctor}
There is a natural functor 
\[
 \mc L_{D,\mo[n]}^{DM} : \DMZ{\mo[n]} \lra \DMZ{D_0}
\]
sending Tate object to Tate object 
and hence inducing a natural
functor
\[
 \mc L_{D,\mo[n]} : \DMT_{/S,\Z}(\mo[n]) \lra \DMT_{/S,\Z}(D_0)
\]

Moreover its composition with the ``structural functor'' $p^* : \DMT_{/S,\Z}(S)
\lra \DMT_{/S,\Z}(\mo[n])$ is isomorphic to $p_{D^0}^*:\DMT_{/S,\Z}(S)\lra \DMT_{/S,\Z}(D_0)$.
\end{prop}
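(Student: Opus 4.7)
The plan is to apply the limit-motive and motivic tangential-basepoint machinery recalled at the beginning of this section from \cite{OAMSpit, SpitMALM, Ayoub6OGII} to the open-closed decomposition $\mo[n] \hookrightarrow \moD[n]{D} \hookleftarrow D_0$, where $\moD[n]{D} = \mo[n]\cup D_0$ realises $D_0$ as a smooth divisor in the smooth open $\moD[n]{D}\subset \mob[n]$. The key geometric input is Proposition \ref{NDtrivial}, which asserts that the normal bundle $N_{D_0}$ of $D_0$ in $\moD[n]{D}$ is trivial; fix once and for all a trivialization, i.e.\ an isomorphism $N^0_{D_0}\simeq D_0\times \Gm$, where $N^0_{D_0}$ denotes the complement of the zero section.

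First I would apply the specialization functor of the Ayoub-Spitzweck formalism to the smooth pair $(D_0,\moD[n]{D})$. This produces a tensor triangulated functor
\[
\mathrm{Sp}_D \colon \DMZ{\mo[n]} \lra \DMZ{N^0_{D_0}} \simeq \DMZ{D_0\times \Gm},
\]
where the identification on the right uses the chosen trivialization. The trivialization also furnishes a canonical section $s\colon D_0\hookrightarrow D_0\times \Gm$, which is the motivic tangential base point attached to the trivialization; pullback along $s$ yields a tensor triangulated functor $s^*\colon \DMZ{D_0\times \Gm}\to \DMZ{D_0}$. I then set
\[
\mc L^{DM}_{D,\mo[n]} := s^*\circ \mathrm{Sp}_D.
\]

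For the preservation of Tate objects, both $\mathrm{Sp}_D$ and $s^*$ are tensor triangulated, send the unit to the unit and are compatible with Tate twists (this is part of the formalism recalled above). Hence $\mc L^{DM}_{D,\mo[n]}$ sends $\Z_{\mo[n]}(p)[q]$ to $\Z_{D_0}(p)[q]$ up to natural isomorphism, inducing the desired restriction $\mc L_{D,\mo[n]}\colon \DMT_{/S,\Z}(\mo[n])\to \DMT_{/S,\Z}(D_0)$. For the compatibility with the structural functor, observe that for $F\in \DMT_{/S,\Z}(S)$ the motive $p^*F$ is pulled back from the base, and the specialization of a motive pulled back from the base is again pulled back from the base (by $\A^1$-homotopy invariance, the monodromy of a constant variation being trivial), so $\mathrm{Sp}_D(p^*F)$ is the pullback of $F$ along $D_0\times \Gm\to S$; composing with $s^*$ gives exactly $p_{D_0}^*F$.

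The main obstacle will be to verify carefully that in the integral Spitzweck framework the limit-motive/tangential-base-point construction does restrict to the Tate subcategories and commutes with structural pullback up to coherent natural isomorphism. The triviality of $N_{D_0}$ supplied by Proposition \ref{NDtrivial} is what makes this tractable, since it reduces the local situation to the trivial model $D_0\times (\A^1,0)$, where the specialization functor is essentially a Tate-twist shift of pullback along the zero section and the required compatibilities are direct consequences of the six functor formalism of Remark \ref{6functorformalisme}.
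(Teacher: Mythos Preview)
Your approach is essentially the same as the paper's: apply Spitzweck's limit-motive functor $\mc L_{X,Z}^{DM}\colon \DMZ{X^0}\to\DMZ{N^0_Z}$ to the pair $(X,Z)=(\moD[n]{D},D_0)$ and then compose with pullback along a nowhere-vanishing section $D_0\to N^0_{D_0}$ furnished by Proposition~\ref{NDtrivial}. The paper resolves the ``obstacle'' you flag by invoking the specific isomorphism $i_Z^* j_{X^0\,*}\MZ{X^0}\simeq p^0_{N_Z\,*}\MZ{N^0_Z}$ (coming from deformation to the normal cone, \cite[Prop.~15.19]{OAMSpit}) together with \cite[Cor.~15.14]{OAMSpit}, which identifies modules over $p^0_{N_Z\,*}\MZ{N^0_Z}$ with the subcategory of $\DMZ{N^0_Z}$ generated by pullbacks from $Z$; these two inputs give Tate preservation and the compatibility with $p^*$ directly, so your heuristic reduction to a ``trivial local model with a Tate-twist shift of pullback along the zero section'' is not needed (and is slightly off, since the target is the \emph{punctured} normal bundle and no zero section or Tate twist appears).
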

\begin{proof}
Let $X$ a be a smooth scheme over $S$, $Z\st{i}{\lra} X$ a regular closed
embedding such that $Z$ is  smooth over $S$. In our case we can also assume that
$Z$ is a divisor of $X$. Let $X^{0}$ be the open complement and
$N_Z^{0}$ the normal bundle of $Z$ in $X$ with zero section removed. 
In \cite{OAMSpit}, M. Spitzweck  defined, as a consequence of his 
Proposition 15.19, a natural functor
\[
\mc L_{X,Z}^{DM} : \DMZ{X^0} \lra \DMZ{N^0_Z}.
\] 

Then, we will apply this functor to the situation $X=\mo[n]\cup D_0$ and compose it with
the pull-back functor induced by an everywhere non-zero 
$S$-section $\sigma : D_0 \lra N_{D_0}^0$ given by Proposition
\ref{NDtrivial}. In order to show that it sends Tate objects to Tate objects, we
need to enter a little in the construction of the functor $\mc L_{X,Z}$ which
relies for its geometric part on the (affine) deformation to the normal cone. 

The deformation to the normal cone is a key geometrical construction need  in
the studies of Gysin maps and was explicitly used and formalized by W. Fulton
\cite{FultonIT}. Later on, it plays an important role in defining
\emph{specialization maps} for example in microlocal theory of sheave
\cite{KaScha94}, and was developed and generalized in \cite{RostCGC} and
\cite{IvorraCMIAA} to higher deformations in order to
study the $A_{\infty}$ structure of cycle modules. 

The starting situation is the following :
\begin{equation}\label{XZdiag}
\begin{tikzpicture}
\matrix (m) [matrix of math nodes,
 row sep=3em, column sep=2em,
 text height=2.0ex,  text depth=0.25ex] 
{ Z & X & X^0    \\
 & S & \\
};
\path[->,font=\scriptsize]
(m-1-1) edge node[above, math mode] {i_Z}  (m-1-2)
(m-1-3) edge node[above, math mode] {j_{X^0}}  (m-1-2)
(m-1-1) edge node[left, math mode] {p_Z}  (m-2-2)
(m-1-2) edge node[auto, math mode] {p_X} (m-2-2)
(m-1-3) edge node[right, math mode] {p_{X^0}} (m-2-2)
;
\node[xshift=2.8em,yshift=-0.25ex] at (m-1-3.center) {$ =X \sm Z^{}$};
\end{tikzpicture}
\end{equation}
Then, one considers the blow-up $\Bl_{Z \times \{0\}}X\times \A^1_{S}$ of $X \times
\A^1_S$ along  $Z \times \{0\}$:
\[
\pi : \Bl_{Z\times\{0\}}X \times \A^1_S \lra X \times \A^1 .
\]
Its restriction to $X \times {\Gm}_S$ is by definition isomorphic to $X \times
\Gm_S$ while its restriction to $X\times \{0\}$ has two components, one of them
is $\Bl_Z X$ and the other one is $\p(N_Z \oplus \mc O_Z)$ where $N_Z$ denotes the
normal bundle of $Z$ in $X$. They intersect each
other on $\p(N_Z)$ which is the exceptional divisor of $\Bl_ZX$. The deformation
of $Z$ to the normal cone is defined as 
\[
D(X,Z)=\left(\Bl_{Z\times \{0\}}X\times \A^1_S\right) \sm \Bl_ZX.
\]
In terms of spectrum, if $\mc J_Z$ denotes the sheaf of ideal defining $Z$, the
deformation $D(X,Z)$ is given by $\Spec(\mc A_{X,Z})$ where  
\[
\mc A_{X,Z}=\oplus_{n \in \Z} \mc J_Z^nt^{-n} \subset \mc O_X[t, t^{-1}]
\]
with the convention that $\mc J_Z^n=\mc O_X$ as soon as $n\leqs 0$. Dropping the
subscript $S$ when the situation is clear enough, the geometric situation is
described by the following diagram
\begin{equation}\label{DXZdiag}
\begin{tikzpicture}
\matrix (m) [matrix of math nodes,
 row sep=4em, column sep=1.5em,
 text height=2.0ex,  text depth=0.25ex] 
{Z & N_Z & & D(X,Z) & & X\times \Gm & X   \\
Z\times\{0\}& &\hphantom{AA} & & X \times \A^1 & & X \times \Gm \\
&\{0\} & & \A^1 & & \Gm & \\
};
\path[->]
(m-1-2) edge node[above, mathsc] {i_{N_Z}}  (m-1-4)
(m-1-6) edge node[above, mathsc] {j_{X \times\Gm}}  (m-1-4)
(m-1-2) edge node[right, mathsc] {p_{N_Z}}  (m-2-1)
(m-1-4) edge node[right, mathsc] {\pi|_{D(X,Z)}} (m-2-5)
(m-1-6) edge node[left, mathsc] {=} (m-2-7)
(m-1-2) edge
node[auto, mathsc] {f|_{\{0\}}}  (m-3-2)
(m-1-4) edge
node[auto, mathsc] {f} (m-3-4)
(m-1-6) edge
node[auto, mathsc] {f|_{\Gm}} (m-3-6)
(m-2-1) edge (m-3-2)
(m-2-5) edge (m-3-4)
(m-2-7) edge node[mathsc, auto]{p_{\ra \Gm}} (m-3-6)
(m-3-2) edge node[above, mathsc] {i_{0}}  (m-3-4)
(m-3-6) edge node[above, mathsc] {j_{\Gm}}  (m-3-4)
(m-1-1) edge node[left, mathsc] {\sim} (m-2-1)
(m-1-1) edge[bend left=45, dotted] node[above,  mathsc] {s_0} (m-1-2)
(m-1-2) edge node[auto, mathsc] {p_{N_Z}} (m-1-1)
(m-1-6) edge node[auto, mathsc] {p_{\ra X}} (m-1-7)
(m-2-7) edge node[right, mathsc] {p_{\ra X}} (m-1-7)
;
\end{tikzpicture}
\end{equation}
In the above diagram the two ``big rectangles'' are Cartesian. Note that the map
$f$ and hence 
maps $f|_{\{0\}}$ and $f|_{\Gm}$ are smooth because $Z$ itself is smooth over
$S$. This was remarked by J. Ayoub in \cite[Beginning of Section 1.6.1 after diagram
(1.37)]{Ayoub6OGI}.  

The open deforamtion $D^0(X,Z)$ is obtain by removing in $D(X,Z)$ the strict
transform  of $Z\times \A^1$ ; that is the closure in $D(X,Z)$ of $Z \times \Gm$.
The  properties of  $D^0(X,Z)$  are resumed in the
following Cartesian diagram
\begin{equation}
\label{D0XZdiag}
\begin{tikzpicture}
\matrix (m) [matrix of math nodes,
 row sep=4em, column sep=3em,
 text height=2.0ex,  text depth=0.25ex] 
{ Z &  N_Z^0& D^0(X,Z)  &X^0\times \Gm & X^0   \\
 & \{0\} & \A^1 &  \Gm  \\
};
\path[->]
(m-1-2) edge node[auto, mathsc]{p^0_{N_Z}} (m-1-1)
(m-1-2) edge node[auto, mathsc]{f|_{\{0\}}}(m-2-2) 
(m-1-3) edge node[auto, mathsc]{f}(m-2-3) 
(m-1-4) edge node[auto, mathsc]{p_{\ra \Gm}}(m-2-4) 
(m-1-2) edge node[auto, mathsc]{i_{N_Z^0}}(m-1-3)
(m-1-4) edge node[auto, mathsc]{j_{X^0\times \Gm}}(m-1-3) 
(m-2-2) edge node[auto, mathsc]{i_{\{0\}}}(m-2-3)
(m-2-4) edge node[auto, mathsc]{j_{\Gm}} (m-2-3)
(m-1-4) edge node[auto, mathsc]{p_{\ra X^0}}(m-1-5)
;
\end{tikzpicture}
\end{equation}
which is ``an open immersion'' of the previous one with closed complement given
over $\{0\}$ and $\Gm$ by $s_{0}(Z) $ and $Z$ respectively.

From this geometric situation M. Spitzweck obtains \cite[Proposition
15.19]{OAMSpit} an isomorphism 
\begin{equation}\label{Spiti*j*p*}
i_{Z}^*j_{X^0\,*} \MZ{X_0} \simeq p^0_{N_Z\,*}\MZ{N^0_Z}
\end{equation}
by comparing the inclusions of $X \st{\sim}{\ra} X\times\{0\}$ and $X
\st{\sim}{\ra} \times \{1\}$ 
in the strict transform of $X \times \A^1$ in $D(X,Y)$ and similarly for $Z$. 

Then  M. Spitzweck uses one of his main results \cite[Corollary 15.14]{OAMSpit}
to identify the homotopy category of modules over $p^0_{N_Z\, *}\MZ{N_Z^0}$ with
 the full triangulated subcategory of $\DMZ{S}$ generated by homotopy colimits
 of pull-back  by $p^0_{N_Z}$ of objects from $\DMZ{Z}$.  The composition of
 $\DMZ{X^0} \lra \mc H(i_Z^*j_{X^0\, *}-Mod)$ with the two identifications gives
 a functor 
\[
\mc L_{X,Z}^{DM} : \DMZ{X^0} \lra \DMZ{N^0_Z}.
\]
 Let $p_{X^0} : X^0 \lra S$ be the structural morphism. The composition of
 $p_{X^0}^* : \DMZ{S} \lra \DMZ{X^0}$ with $\mc L_{X,Z}$ is 
 isomorphic to $\DMZ{S} \lra \DMZ{N_Z^0}$ induced by the structural morphism of
 $N_Z^0$ because 
\begin{itemize}
\item the compatibility with objects lifted from the base  is given at the end of
  Corollary 15.14 in \cite{OAMSpit} ;
\item the condition of corollary 15.14, asking that $M\otimes p^0_{N_Z \, *}
  \MZ{N_Z}$ is isomorphic  to $p^0_{N_Z\, *} p_{N_Z}^{0\, *}(M)$ for any $M$ in
  $\DMZ{Z}$, is satisfied. 
\end{itemize} 

Note that these two same properties insure that $\mc L_{X, Z}$ sends Tate
object to Tate object.

Note that this material has also been developed in \cite{SpitMALM} with some
more details.
\end{proof}

\begin{rem}\label{nearbyDivisor}
We develop in this remark another approach to limit motives : the nearby cycle
functor \cite{Ayoub6OGII}. This method has been used by J. Ayoub  in 
\cite{AyoubAHGFMccnII,  AyoubRCP} in the case of curve over field. The following
construction  agrees with  M. Spitzweck's one; see \cite{SpitMALM, OAMSpit}. We
explain below how deformation to the normal cone allows us to obtain a limit
motive functor from Ayoub's nearby cycle functor. For the rest of this remark,
we assume that  Ayoub's formalism is
available ; that is we assume that the functor $\DMZ[{/S,R}]{}$ associating to $X \in
\Sm[S]$ the triangulated category
$\DMZ[/S,R]{X}$ (using $R$ coefficients) is coming from a monoidal stable 
homotopic algebraic derivator on diagrams of quasi-projective scheme over $S$. 
This assumption applies directly to our situation when working with rational
coefficients ($R=\Q$) and the Beilinson's $E_{\infty}$-ring spectrum $\MZ[\Q]{X}$ as in
section \ref{rationalcoef} below ($S=\Spec(\Z)$ is omitted from the notation).

We give again  diagram \eqref{D0XZdiag} :
\[
\begin{tikzpicture}
\matrix (m) [matrix of math nodes,
 row sep=4em, column sep=3em,
 text height=2.0ex,  text depth=0.25ex] 
{ N_Z^0& D^0(X,Z)  &X^0\times \Gm & X^0   \\
  \{0\} & \A^1_S &  \Gm_S  \\
& S & \\
};
\path[->]
(m-1-1) edge node[auto, mathsc]{f|_{\{0\}}}(m-2-1) 
(m-1-2) edge node[auto, mathsc]{f}(m-2-2) 
(m-1-3) edge node[auto, mathsc]{p_{\ra \Gm}}(m-2-3) 
(m-1-1) edge node[auto, mathsc]{i_{N_Z^0}}(m-1-2)
(m-1-3) edge node[auto, mathsc]{j_{X^0\times \Gm}}(m-1-2) 
(m-2-1) edge node[auto, mathsc]{i_{\{0\}}}(m-2-2)
(m-2-3) edge node[auto, mathsc]{j_{\Gm}} (m-2-2)
(m-1-3) edge node[auto, mathsc]{p_{\ra X^0}}(m-1-4)
(m-2-1) edge node[auto, mathsc] {=}(m-3-2)
(m-2-2) edge (m-3-2)
(m-2-3) edge node[auto, mathsc] {q_{\Gm}}(m-3-2)
;
\end{tikzpicture}
\]
which corresponds to the situation of a ``specialization functor'' over the base $\A^1$ 
as described by Ayoub \cite[Section 3.1 ; 3.4 and 3.5]{Ayoub6OGII}. The nearby
cycles functor from Ayoub gives us 
\[
\Psi_f : \DMZ[R]{X\times \Gm} \lra \DMZ[{R}]{N_Z^0}.
\]
The limit motives functor is then obtained by composing with $p_{\ra X^0}^*$:
\[
\mc L_{X,Z}^{DM\, \Psi} : \DMZ[{R}]{X^0}\st{p_{\ra X^0}^*}{\lra} 
 \DMZ[{R}]{X\times \Gm} \st{\Psi_f}{\lra} \DMZ[{R}]{N_Z^0}.
\]
In the case $X^0=\mo[n]$ and $Z=D_0$, we compose as previously this functor by
the non zero section of $p^0_{N_Z} : N_Z^0 \ra 
Z$ given by Proposition \ref{NDtrivial} and obtain
\[
\mc L_{D,\mo[n]}^{DM\, \Psi} : \DMZ[{R}]{\mo[n]} \lra \DMZ[{R}]{D_0}.
\]

The compatibility with mixed Tate categories follows from the following facts:
\begin{enumerate}
\item Tate objects  in  $\DMZ[{R}]{X^0}$ ; denoted by $R_{X^0}(i)$ in this
  remark; are lifted from the ones in 
  $\DMZ[{R}]{S}$. Hence their pull-back in  $\DMZ[{R}]{X^0\times \Gm}$ can be seen
  either as lifted from $\Gm_S$ or as lifted from $S$.
\item In the first case, by the compatibility of specialization functor with
  smooth morphisms \cite[Definition 3.1.1]{Ayoub6OGII} insures that 
\[
\Psi_f(p_{\ra \Gm}^*R_{\Gm}(i)) \simeq f|_{\{0\}}^*\circ \Psi_{\id_{\A^1}}(R_{\Gm}(i)).
\] 
Now, seeing the Tate objects as lifted from $S$ by $q_{\Gm}^*$, we can apply
Proposition 3.5.10 in \cite{Ayoub6OGII} which insures that 
\[
\Psi_{\id_{\A^1}}\circ q^*_{\Gm} \sim \id.
\]  
Hence the functor 
\[
\mc L_{X,Z}^{DM\, \Psi} : \DMZ[{R}]{X^0} {\lra} \DMZ[{R}]{N_Z^0}
\]
sends Tate object to Tate objects (eventually composing with the natural
isomorphic transformation). It also insures that its  composition with 
\[ 
p_{X^0} : \DMZ[{R}]{S} \lra  \DMZ[{R}]{X^0}
\]
equals $ \DMZ[{R}]{S} \lra  \DMZ[{R}]{N_Z^0}$ induced by the structural morphism
of $N_Z^0$.
\end{enumerate} 
This gives us the desired functor on mixed Tate categories
\[
\mc L_{D,\mo[n]}^{DM\, \Psi} : \DMZ[{R}]{\mo[n]} \lra \DMZ[{R}]{D_0}.
\]
\end{rem}

We come now to the more delicate aspect of tangential based point in general
situation. The general situation is the following : $X \st{p_X}{\lra} S$ a smooth
scheme with a strict normal crossing divisor $Z=\cup_{i\in I} Z_i$. 
We will
denote by $Z_J$ the intersection $\cap_{i\in J} Z_j$. Note that the $Z_J$ are
also smooth over $S$. Note that in our applications; that is $X=\mob[n]$ and $Z=\partial
\mob[n]$; the $Z_J$'s are irreducible and we will assume it is the case for the
following description. If it is not the case, the description below works as
well with an extra care given to the various irreducible components of the
$Z_J$.

As previously let $X^0$ denote $X \sm Z$. Let $J$ be a subset of $I$ and let
$Z_J^0$ be the ``open stratum'' 
\[
Z_J \sm  \left( \cup_{i\in I \sm J}Z_i\cap Z_J\right). 
\]
Let $N_i$ (resp. $N_i^0$) denote the normal bundle of $Z_i$ in $X$ (resp. with
zero section removed),  $N_J$ (resp. $N_J^0$) is defined as  the fiber product
of the $N_j|_{Z_J}$ (resp. $N_j^0|_{Z_J}$) over $Z_J$ and $N_{J^0}$
(resp. $N^0_{J^0}$) its restriction to $Z_J^0$. 

We are interested in generalizing the previous situation and having a functor
\[
\mc L_{X,J} : DM(X^0) \lra DM(N_{J^0}^{0})
\]
compatible with mixed Tate categories and structural pull-back functors. We
want then to apply this functor in the case where 
 $Z_J$ is an $S$-point, that is of maximal
codimension.

First of all remark that by setting 
\[
X'=X\sm \left( \cup_{i \in I \sm J} Z_i\right) 
\]
 we can assume that $I=J$. In this case $Z_J^0$ (resp. $N_{J^0}$, $N^0_{J^0}$) is simply
 $Z_J$ (resp. $N_J$, $N_J^0$). We treat below only this situation.

Note also that in our strict normal
crossing divisor situation $N_{J}$ equals $N_{Z_J}$ the normal bundle of $Z_J$ in
$X$. Moreover locally with affine coordinates, or when $N_{Z_J}$ is trivial, one
has a  isomorphism between $N_{J}^0$
and $(\Gm_{Z_J})^{|J|}$.

\begin{lem}[{Consequence of \cite[Proposition 15.22]{OAMSpit}}]\label{SNCDfunctor}
There is a natural functor
\[
\mc L_{X,J}^{DM} : \DMZ{X^0} \lra \DMZ{N_J^0}
\]
preserving Tate objects and compatible with structural pull-back
morphisms. Hence we obtain a functor between mixed Tate categories
\[
\mc L_{X,J}^{DM} : \DMT_{/S, \Z}{(X^0)} \lra \DMT_{/S,\Z}{(N_J^0)}
\]
such that its composition with $\DMT_{/S,\Z}(S) \lra \DMT_{/S,\Z}(X^0)$ equals
the functor 
\[
\DMT_{/S, \Z}(S) \lra \DMT_{/S, \Z}(N_J^0)
\] induced by the structural morphism
of $N_J^0$.
\end{lem}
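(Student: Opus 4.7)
The plan is to apply Spitzweck's multi-deformation result (Proposition 15.22 in \cite{OAMSpit}), which directly generalizes the single-divisor deformation used in Proposition \ref{divisorfunctor}. First I would use the reduction noted in the paragraph preceding the lemma: by restricting to $X' = X \setminus \bigl(\bigcup_{i \in I \setminus J} Z_i\bigr)$ we reduce to the case $J = I$, in which the stratum $Z_J$ is the deepest closed stratum and $N_J^0$ is the product over $Z_J$ of the punctured normal bundles $N_i^0|_{Z_J}$ for $i \in J$.

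In this setting, Spitzweck's Proposition 15.22 produces a multi-parameter deformation space over $\A^{|J|}_S$, obtained by iteratively blowing up $X \times \A^{|J|}_S$ along the $Z_i \times \{0_i\} \times \A^{|J|-1}_S$ and then removing the strict transforms of the $Z_i \times \A^{|J|}_S$. Its generic fiber over $\Gm^{|J|}$ is $X^0 \times \Gm^{|J|}$ and its special fiber over $0$ is $N_J^0$. Comparing inclusions of these two fibers yields, as in \eqref{Spiti*j*p*}, an isomorphism
\[
i_{Z_J}^* j_{X^0, *} \MZ{X^0} \simeq p^0_{N_J, *} \MZ{N_J^0},
\]
and combining this with the identification of $\mc H(p^0_{N_J, *} \MZ{N_J^0}\text{-Mod})$ with the full subcategory of $\DMZ{N_J^0}$ generated by homotopy colimits of pull-backs from $\DMZ{Z_J}$ (Corollary 15.14 in \cite{OAMSpit}) defines the functor $\mc L_{X,J}^{DM}$.

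Preservation of Tate objects and compatibility with the structural pull-back then follow exactly as in the proof of Proposition \ref{divisorfunctor}: Tate objects on $X^0$ are lifted from $\DMZ{S}$, so Corollary 15.14 of \cite{OAMSpit} identifies their images with the corresponding Tate objects lifted from $\DMZ{Z_J}$ and then from $\DMZ{S}$ through $N_J^0 \to Z_J \to S$, and the projection formula $M \otimes p^0_{N_J, *}\MZ{N_J^0} \simeq p^0_{N_J, *} p_{N_J}^{0, *}(M)$ for $M$ in $\DMZ{Z_J}$ continues to hold. Passage to the mixed Tate subcategories $\DMT_{/S, \Z}$ is then automatic since $\mc L_{X,J}^{DM}$ sends Tate twists and shifts to Tate twists and shifts.

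The main obstacle — essentially already handled by Spitzweck — is the construction of the symmetric multi-deformation and the verification that the above isomorphism holds for a strict normal crossing configuration rather than a single smooth divisor. An alternative route would be to iterate Proposition \ref{divisorfunctor} along the $Z_i$ one at a time, using at each step that the remaining $Z_j$ pull back to divisors in $N_i^0$ whose open stratum contains $N_J^0$; this approach however would require verifying independence from the ordering of the divisors, which essentially amounts to reconstructing the symmetric multi-deformation by hand.
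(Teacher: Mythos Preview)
Your proposal is correct and follows essentially the same route as the paper: obtain the generalized isomorphism $i_{Z_J}^* j_{X^0,*}\MZ{X^0}\simeq p^0_{N_J,*}\MZ{N_J^0}$ from Spitzweck's multi-deformation result, then invoke Corollary~15.14 of \cite{OAMSpit} to pass to module categories and to check Tate preservation and compatibility with structural pull-backs. The only cosmetic difference is that the paper describes the multi-deformation as the fiber product over $X\times\A^1$ of the individual deformations $D(X,Z_j)$, whereas you describe it via iterated blow-ups of $X\times\A^{|J|}$; these give the same space (cf.\ also the Rost--Ivorra description in Remark~\ref{nearbySNCD}).
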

\begin{proof}[Comments on the construction]
The proof consists in obtaining the generalization of the
isomorphism \ref{Spiti*j*p*}
\[
i_{J}^*j_{X^0\,*} \MZ{X_0} \simeq p^0_{N_J\,*}\MZ{N^0_{J}}
\]
where $i_J$ denotes the regular embedding $Z_J \lra X$.

This isomorphism is obtained by taking the fiber product over $X\times \A^1$ of the
deformation $D(X,Z_j)$ (resp. $D^0(X,Z_j)$) for all $Z_j$.
Then his proof goes mostly as in the case where there is only one $Z_j$ by
remarking that $\MZ{X_0}$ is the pull-back from $\MZ{S}$ by $(p_{X}\circ j_{X^0})^*$. 

As previously, the compatibility with mixed Tate object and pull-back by
structural morphism rely partly on \cite[Corollary 15.14]{OAMSpit}.

%
\end{proof}

\begin{rem}[on higher deformations to the normal cone and nearby cycle
  functor]\label{nearbySNCD}
As previously; especially when working with the Beilinson spectrum, that is
working with rational coefficient; one might prefer using Ayoub's nearby cycles
functor. We introduce now a higher deformation to the normal cone as presented
in \cite[Section 3.1.3]{IvorraCMIAA} following M. Rost in 
\cite[{\S  (10.6)}]{RostCGC}. Let $\mc J_j$ denote the sheaf of ideal defining
the $Z_j$'s and let $k$ denote the cardinal of $J$ (we treat only the case
$J=I$). We assume that that $J=\{1, \ldots,k\}$ as it induces an easier notation. Then 
the subalgebra of $\mc O_X[t_1,t_1^{-1}, \ldots , t_k , t_k^{-1}]$ 
\[
\mc A_{X,J}=\oplus_{(a_1, \ldots , a_k) \in \Z^k} \mc J_1^{a_1}\cdots
J_k^{a_k}t_1^{a_1} \cdots t_k^{a_k}
\]
 is quasi-coherent over $\mc O_X[t_1, \cdots, t_n]$; in the above definition, as
 previously,  $\mc J_j^{a_j}=\mc O_X$ as soon as $a_j \leqs 0$. The simultaneous
 deformation of the $Y_j$'s is defined as :
\begin{equation}
D(X;Y_1, \ldots , Y_k)=D(X;J)=\Spec\left( \mc A_{X,J} \right).
\end{equation}
Inverting the $t_j$, one obtains
\[
\mc A_{X,J} [t_1^{-1}, \ldots , t_k^{-1}] = \mc O_X[t_1,t_1^{-1}, \ldots , t_k , t_k^{-1}]
\]
and hence a canonical isomorphism between $X\times \Gm^k$ and  the restriction
of $D(X,J)$ over $ \Gm^k$ and the following diagram
\begin{equation}\label{HDefZJ}
\begin{tikzpicture}
\matrix (m) [matrix of math nodes,
 row sep=4em, column sep=1.5em,
 text height=2.0ex,  text depth=0.25ex] 
{ D(X,Z) & & D(X,J)|_{\Gm^k} & X   \\
 & X \times \A^k & & X \times \Gm^k \\
 \A^k & & \Gm^k & \\
};
\path[->]
(m-2-4) edge node[midway,fill=white]{ } (m-2-2)
(m-1-1) edge node[auto, mathsc]{f_J} (m-3-1)
(m-1-3) edge node[left, mathsc, near start]{f_J|_{\Gm^k}} (m-3-3)
(m-3-3) edge node[auto, mathsc]{j_{\Gm^k}} (m-3-1)
(m-2-4) edge node[auto, mathsc]{p_{\ra \Gm^k}}(m-3-3)
(m-2-4) edge node[right, mathsc]{p_{\ra X}}(m-1-4)
(m-1-3) edge node[auto, mathsc]{j_{D|_{\Gm^k}}}(m-1-1)
(m-1-3) edge node[auto, mathsc]{=} (m-2-4)
(m-1-1) edge (m-2-2)
(m-2-2) edge (m-3-1)
;
%
%
\end{tikzpicture}
\end{equation}
where the square and the parallelograms are Cartesian. 

Note that the
construction is compatible with permutation of the coordinates on $\A^k$ and
permutation of the $Y_j$'s. Inverting only $t_1,\ldots, t_l$ ($l<k$), one obtains
\[
D(X,J)|_{\Gm^l\times \A^{k-l}}=\Gm^l\times D(X;Y_{l+1}, \ldots Y_k).
\]
F. Ivorra has described in \cite{IvorraCMIAA} the fiber over $t_1= \cdots
=t_l=0$. The description goes by induction on $l$. For $l=1$, $Y_{[l]}$ is
simply $Y_1$, $N_{[l]}$ is $N_1$ the normal bundle of $Y_1$ in $X$. For $l\geqs 2$,
let $Y_{[l]}$ be the intersection 
\[
Y_1\cap \cdots \cap  Y_l
\]
  and $N_{[l]}$ is the normal
bundle of $N_{[l-1]}|_{Y[l]}$ in $N_{[l-1]}$ which can be written as 
\[
N_{[l]}=N(N_{[l-1]},N_{[l-1]}|_{Y_{[l]}}).
\]
Now let $D^{[l]}$ be
the deformation
\[
D(N_{[l]};N_{[l]}|_{Y_{l+1}\cap Y_{[l]}},\ldots ,N_{[l]}|_{Y_{k}\cap Y_{[l]}}).
\]
Then one has
\[
D(X,J)|_{t_1=\cdots =t_l=0}=D^{[l]}.
\]

The fiber over $(0, \cdots, 0)$ ; that is when all the $t_j$'s are zero; is
isomorphic to $N_J$ the normal bundle of $Y_J$ in
$X$. As a last remark, the description of $\mc A_{X,J}$ shows that the
restriction of $D(X;J)$ to the diagonal (or any line going through the origin)
 is isomorphic to $D(X;Y_J)$. 

Now, as in the case of a simple deformation, one can remove the strict transform
of $Z\times \A^k$ in $D(X,J)$ and obtain an ``open deformation'' $D^0(X,J)$
whose restriction to $\Gm^k$ is simply $X^0\times \Gm^k$. Its fiber over
$(0,\ldots, 0)$ is isomorphic to $N_J^0$ and its restriction to the diagonal
$\Delta_k$ gives us 
\[
\begin{tikzpicture}
\matrix (m) [matrix of math nodes,
 row sep=4em, column sep=3em,
 text height=2.0ex,  text depth=0.25ex] 
{ N_J^0& D^0(X,J)|_{\Delta_k}  &X^0\times \Gm & X^0   \\
  \{0\} & \A^1_S &  \Gm_S  \\
& S & \\
};
\path[->]
(m-1-1) edge node[auto, mathsc]{f_J|_{\{0\}}}(m-2-1) 
(m-1-2) edge node[auto, mathsc]{f_J}(m-2-2) 
(m-1-3) edge node[auto, mathsc]{p_{\ra \Gm}}(m-2-3) 
(m-1-1) edge node[auto, mathsc]{i_{N_Z^0}}(m-1-2)
(m-1-3) edge node[auto, mathsc]{j_{X^0\times \Gm}}(m-1-2) 
(m-2-1) edge node[auto, mathsc]{i_{\{0\}}}(m-2-2)
(m-2-3) edge node[auto, mathsc]{j_{\Gm}} (m-2-2)
(m-1-3) edge node[auto, mathsc]{p_{\ra X^0}}(m-1-4)
(m-2-1) edge node[auto, mathsc] {=}(m-3-2)
(m-2-2) edge (m-3-2)
(m-2-3) edge node[auto, mathsc] {q_{\Gm}}(m-3-2)
;
\end{tikzpicture}
\]
And we proceed as in Remark \ref{nearbyDivisor} in order to obtain a functor
\[
\mc L_{X,J}^{DM, \Psi} : \DMZ[{}]{X^0}\st{p_{\ra X0}^*}{\lra}
\DMZ[{}]{X^0\times \Gm}\st{\Psi_{f_J}}{\lra}\DMZ[{}]{N_J^0}.
\]
Compatibilities with mixed Tate categories and pull-back by structural morphisms
are as in Remark \ref{nearbyDivisor}.
\end{rem}
\begin{conj} The geometric situation given by the higher deformation
  $D(X,J)$ makes it possible to use a succession of specialization functor
  each corresponding to an $\A^1$ factor. This procedure does not depend on
  choices when considering only the mixed Tate motives categories. It agrees
  with  our construction using the diagonal. 
\end{conj}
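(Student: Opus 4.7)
The plan is to build the iterated specialization functor by a step-by-step descent through the coordinates $t_1,\ldots,t_k$ of $\A^k$, then verify independence of order and agreement with the diagonal using the fact that on mixed Tate objects everything is lifted from the base.

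First I would set up the iterated construction. Fixing an ordering of $J=\{1,\ldots,k\}$, I view the open higher deformation $D^0(X,J)$ as a smooth $\A^k$-scheme via $f_J$. Following F. Ivorra's description recalled in Remark \ref{nearbySNCD}, restriction of $D^0(X,J)$ over $\Gm^{k-l}\times \{0\}^l$ is identified with the iterated normal cone $N_{[l]}^0$, and in particular the fibre over the origin is $N_J^0$. Writing $f_J$ as the composition of the projections $\A^k \to \A^{k-1}\to \cdots \to \A^1 \to \Sp(\Z)$, I would apply Ayoub's specialization functor $\Psi$ at each coordinate in turn. Concretely, denoting by $\Psi_{t_j}^{[j]}$ the nearby cycle functor over the $j$-th factor of $\A^1$ attached to the deformation $D^{[j-1]}\to \A^{k-j+1}$, composing them provides a functor
\[
\mc L^{\Psi,\,\text{it}}_{X,J} : \DMZ[R]{X^0} \xrightarrow{p_{\to X^0}^*} \DMZ[R]{X^0\times \Gm^k} \xrightarrow{\Psi_{t_1}^{[1]}\circ\cdots\circ \Psi_{t_k}^{[k]}} \DMZ[R]{N_J^0}.
\]
The compatibility with the mixed Tate subcategories and with pull-back by structural morphisms is handled exactly as in Remark \ref{nearbyDivisor}: Tate objects are lifted from $\Sp(\Z)$, so their image under $p_{\to X^0}^*$ is in the essential image of $q_{\Gm^k}^*$, on which Ayoub's unipotent specialization functor is (naturally isomorphic to) the identity by Proposition 3.5.10 in \cite{Ayoub6OGII}.

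Next I would establish independence of the ordering. For transverse smooth parameters the nearby cycle functors associated to distinct coordinates commute up to canonical natural isomorphism; this is a standard feature of Ayoub's formalism for $\A^n$-schemes with strict normal crossing degeneracy. Rather than proving the general commutation, for the purposes of the conjecture one only needs it on Tate objects, where the argument collapses: the pull-back of $R_{X^0}(i)$ is of the form $q_{\Gm^k}^*R(i)$, and successive unipotent specializations of constant sheaves along transverse coordinates yield (up to natural isomorphism) the constant pull-back to $N_J^0$. Any reordering factors through the same compatibility with $q_{\Gm^k}^*$, so the resulting functor on $\DMT_{/S,R}(X^0)$ is canonically independent of the chosen ordering.

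Agreement with the diagonal construction $\mc L^{\Psi}_{X,J}$ of Remark \ref{nearbySNCD} is then obtained by base change along the diagonal $\Delta_k : \A^1 \hookrightarrow \A^k$. Since, as noted after diagram \eqref{HDefZJ}, the restriction $D(X,J)|_{\Delta_k}$ is $D(X;Y_J)$ and its fibre over $0$ is again $N_J^0$, Ayoub's compatibility of nearby cycles with smooth morphisms and with base change (3.1.1 and the base-change properties of Section 3 of \cite{Ayoub6OGII}) produces a canonical comparison natural transformation between $\mc L^{\Psi}_{X,J}$ and the iterated functor $\mc L^{\Psi,\,\text{it}}_{X,J}$. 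On Tate objects both sides reduce to pull-back along the structural morphism of $N_J^0$, which forces the comparison to be an isomorphism on $\DMT_{/S,R}(X^0)$.

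The main obstacle is the transverse commutation of nearby cycles for arbitrary motives: although the general multivariable nearby cycle formalism has been developed by Ayoub in the setting of tame log-smooth degenerations, making the iterated and diagonal constructions agree on the full triangulated categories $\DMZ[R]{X^0}$ would require a careful comparison of unipotent monodromy along the various coordinates. Restricting to Tate objects bypasses this, because everything is then controlled by pull-back from the base and by $\Psi\circ q^* \simeq \id$; this is why the conjecture is stated only on mixed Tate categories and why the above strategy should suffice there.
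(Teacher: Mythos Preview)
The statement you are addressing is a \emph{conjecture} in the paper, not a proved result: the paper states it immediately after Remark~\ref{nearbySNCD} and offers no proof. There is therefore no paper proof to compare your proposal against; what you have written is a proof strategy for an open statement.

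On the merits of your strategy, the reduction ``on Tate objects everything is pulled back from $S$ and $\Psi\circ q^*\simeq \id$'' is sound for the individual generators $R_{X^0}(i)$, and a comparison natural transformation that is an isomorphism on generators does extend to the full triangulated subcategory $\DMT_{/S,R}(X^0)$ by d\'evissage. The genuine gap is the construction of that comparison transformation itself. You invoke ``base change along the diagonal $\Delta_k:\A^1\hookrightarrow\A^k$'' together with Ayoub's compatibility with smooth morphisms, but $\Delta_k$ is a closed immersion, not smooth, so neither the smooth base change axiom of specialization systems nor the proper base change for $\Psi$ applies directly. What is actually needed is a comparison between the unipotent nearby cycle along a single parameter and an iterated one along several transverse parameters; this is exactly the multivariable nearby cycle comparison that you flag as the ``main obstacle'' at the end, and restricting to Tate objects does not, by itself, manufacture the natural transformation---it only tells you that \emph{if} such a transformation exists then it is an isomorphism on $\DMT$. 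Without an independent construction of the comparison map (for instance via Ayoub's later work on polydisk nearby cycles, or via a direct zig-zag of specialization systems over intermediate strata of $\A^k$), the argument remains a plausible outline rather than a proof.
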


Now, we can apply the above functor to our particular situation in order to
obtain \emph{base point at infinity} or \emph{tangential base point}. Let $n \geqs 4$.
Let $v$ be a point in $\mob[n]$ given by a closed stratum of $\partial \mob[n]$ of maximal
codimension. The stratum  $v$ is the non-empty intersection of exactly
$n-3=\dim_{S}(\mob[n])$ irreducible component of $\partial \mob[n]$ of
codimension $1$
\[
v=\bigcap_{
\substack{D \mx{ \scriptsize cl. str. of } \partial \mob[n] 
\\ \on{codim}(D)= 1
\\ v\in D}
}D =
\bigcap_{j \in J}D_j
\]
where $J=\{1,\ldots n-3\}$ corresponds to a numbering of the closed codimension
$1$ strata $D$ with $v \in D$.
The normal bundle $N_v$ of $v$ in $\mob[n]$ is trivial.
\begin{defn} 
A \emph{tangential base point}  $x_v$ of $\mo[n]$ is the choice of
a closed strata $v$ of maximal codimension in  $\partial \mob[n]$ and of a  non
zero $S$-point in $N_v^0=N_J^0$ with the notation of Lemma \ref{SNCDfunctor}
(note that in this case $N_{J^0}^0=N_J^0$ as $v=Z_J$ can not have a non empty
intersection with any other component of $\partial \mob[n]$). 
\end{defn}
 
\begin{prop}\label{motivic-base-points}
For any tangential base point $x_v$ of $\mo[n]$, there is a natural functor 
\[
 \tilde x_v^{DM, *} : \DMZ{\mo[n]} \lra \DMZ{S}
\]
sending the Tate object to the Tate object 
and hence inducing a natural
functor
\[
 \tilde x_v^* : \DMT_{/S, \Z}(\mo[n]) \lra \DMT_{/S, \Z}(S).
\]
Both functors are compatible, in the sens of Lemma \ref{SNCDfunctor}, with the
pull-back by the structural functor from $\DMZ{S}$.
\end{prop}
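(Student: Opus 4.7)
The plan is to build $\tilde x_v^{DM,*}$ as the composition of two functors: first the limit-motive functor $\mc L_{\mob[n],J}^{DM}$ attached to the strict normal crossing situation at $v$, and then the pull-back by the $S$-section $\sigma_v$ underlying the tangential base point. More precisely, I would first apply Lemma \ref{SNCDfunctor} in the geometric setting $X=\mob[n]$, $X^0=\mo[n]$, with the index set $J=\{1,\ldots,n-3\}$ chosen so that $v=Z_J=\bigcap_{j\in J}D_j$. Since $v$ has maximal codimension and thus cannot meet any other codimension $1$ component of $\partial\mob[n]$, we are precisely in the case where $Z_J^0=Z_J=v$ and $N_{J^0}^0=N_J^0$. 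Lemma \ref{SNCDfunctor} then produces
\[
\mc L_{\mob[n],J}^{DM}:\DMZ{\mo[n]}\lra \DMZ{N_v^0},
\]
preserving Tate objects and fitting into the compatibility $\mc L_{\mob[n],J}^{DM}\circ p_{\mo[n]}^{*}\simeq p_{N_v^0}^{*}$.

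The second step is to interpret the tangential base point as an $S$-morphism. By definition, $x_v$ provides a non-zero $S$-point of $N_v^0$, i.e.\ an $S$-section
\[
\sigma_v:S\lra N_v^0
\]
of the structural morphism $p_{N_v^0}:N_v^0\ra S$ (such sections exist because, by iterated use of Proposition \ref{NDtrivial}, the normal bundle $N_v=\bigoplus_{j\in J}N_{D_j}|_v$ is a trivial rank $n-3$ vector bundle over $v\simeq S$, so $N_v^0\simeq \Gm_S^{n-3}$). I then set
\[
\tilde x_v^{DM,*}:=\sigma_v^{*}\circ \mc L_{\mob[n],J}^{DM}:\DMZ{\mo[n]}\lra \DMZ{S}.
\]

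To conclude, I would verify the two properties. Preservation of Tate objects follows from Lemma \ref{SNCDfunctor} for $\mc L_{\mob[n],J}^{DM}$ and from the fact that the pull-back $\sigma_v^{*}$ along any morphism of smooth $S$-schemes sends Tate motives to Tate motives; restricting to the mixed Tate subcategories then yields the induced functor $\tilde x_v^{*}:\DMT_{/S,\Z}(\mo[n])\lra \DMT_{/S,\Z}(S)$. Compatibility with the structural pull-back reduces to the chain
\[
\tilde x_v^{DM,*}\circ p_{\mo[n]}^{*}\;=\;\sigma_v^{*}\circ \mc L_{\mob[n],J}^{DM}\circ p_{\mo[n]}^{*}\;\simeq\; \sigma_v^{*}\circ p_{N_v^0}^{*}\;\simeq\;(p_{N_v^0}\circ\sigma_v)^{*}\;=\;\id^{*},
\]
using the compatibility asserted in Lemma \ref{SNCDfunctor} and the fact that $\sigma_v$ is an $S$-section.

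The only delicate point I expect is checking that Lemma \ref{SNCDfunctor} really applies at a stratum of maximal codimension, namely that the hypotheses of \cite[Proposition 15.22]{OAMSpit} are met for the full collection of codimension $1$ boundary components passing through $v$ and that their intersection $v$ is indeed smooth over $S$. This is granted here because $\partial\mob[n]$ is a strict normal crossing divisor with smooth transverse components and $v$ is an $S$-point. Beyond that, the argument is a formal composition; no deep new input is required.
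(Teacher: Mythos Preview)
Your proposal is correct and follows essentially the same approach as the paper: apply Lemma \ref{SNCDfunctor} to obtain the limit-motive functor into $\DMZ{N_v^0}$, then compose with the pull-back along the $S$-point of $N_v^0$ furnished by the tangential base point. The only minor discrepancy is that you take $X=\mob[n]$ directly, whereas the paper (consistently with the reduction made just before Lemma \ref{SNCDfunctor}) first removes the boundary components not passing through $v$, setting $X=\mob[n]\sm(\cup_{i\in I\sm J}D_i)$ and $Z_j=D_j\cap X$; this is precisely what is needed so that $X^0=\mo[n]$ and the lemma, stated for $I=J$, applies verbatim.
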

\begin{proof}
The boundary $\partial \mob[n]$ can be written as the union of it codimension
$1$ irreducible component
\[
\partial \mob[n]=\bigcup_{i \in I} D_i.
\]
The closed strata $v$ defines a subset $J$ of $I$ by 
\[
v= \bigcap_{j\in J} D_j.
\]
With 
\[
X=\mob[n]\sm (\cup_{i \in I \sm J}) D_i \quad \mx{and} \quad 
Z_j=D_j\sm (\cup_{i \in I  \sm J}) D_i\cap D_j  
\]
Lemma \ref{SNCDfunctor} gives us a functor
\[
\mc L_{X,J}^{DM} : \DMZ{X^0}=\DMZ{\mo[n]} \lra \DMZ{N_J^0}.
\]
 The functor $ \tilde x_v^{DM, *} $ is obtained
by composing $\mc L_{X,J}^{DM}$ with the pull-back of the $S$-point in $N_J^0$. 
In this application, we have simply $Z_J=v$  and that $Z_J^0=Z_J$.
\end{proof}

There is a canonical system of tangential based
point over $\Spec(\Z)$ on $\mo[n]$. A point $v=\cap_{j \in J}D_j  $ of this
system is given by the closed 
strata of maximal codimension of $\partial \mob[n]$. In order to choose an
$S$-point in its normal bundle, we  chose a dihedral structure $\delta$ on the
marked points  and  vertex coordinates $x_j$ corresponding to the point
$v$ (cf. \cite[Definition 2.18]{BrownMZVPMS}). Note that the chosen vertex
coordinates might differs only by the choice of the numbering. These vertex
coordinates induce a basis on $N_J$. The sum of the vector of this basis depends
only on the dihedral structure $\delta$ and is the $S$-point in $N_J^0$ attached to $v$ and
$\delta$. Changing the delta amount in introducing signs instead of taking the
sums of the basis's vector (see. \cite[{\S 2.7}]{BrownMZVPMS}). 

\begin{defn}\label{Pninftybasedpoints}Let $P_{n,\infty}$ denote the set of these
  tangential based points.  
\end{defn}
Remark that F. Brown developed his notion of based point
at infinity for the $\mo[n]$ in relation with the question of unipotent closures
and periods of the moduli space of curves in genus $0$: see  \cite{BrownMZVPMS}
Definition 3.16 and Example 3.17 and before Section 6.3.   

\begin{rem}\label{rem:Qcoeffield} The results of the above subsection and of
  section \ref{sec:m0n} 
  hold by the same arguments in a ``more classical'' motivic category as the one
  developed by Cisinski and Déglise \cite{CiDegTCM} : rational coefficients over
  a general base and Beilinson $E_{\infty}$-ring spectrum. Later on we will be
  interested in the case where the base is either a number field of the ring of
  integer of such a field with some prime inverted (see Section
  \ref{rationalcoefsetting}).  
\end{rem}
\subsection{The motivic short exact sequence}

Derived groups scheme were in particular studied by B. Toën in \cite{ToenHHCS} and
Spitzweck in \cite{DFGTMSpit}. They can be considered as spectrum of $E_{\infty}$
algebras (with a coproduct structures).
Now, using Spitzweck \cite{DFGTMSpit}, we define a derived group scheme 
associated to  the category $\DMZ{\mo[n]}$. Recall that $S=\Spec(\Z)$. Using
Theorem \ref{BSZ} and Theorem \ref{thm:M0nMTMBS}, we can 
apply directly Theorem 8.4 in \cite{SpitCP1S}.
\begin{thm}\label{thm:DMTM0n}Let $n\geqs 3$ be an integer. There is an affine derived group scheme
  $G^{\bullet}_{\Z,n}$ over $\Z$ such that 
\[
\on{Perf}(G^{\bullet}_{\Z,n}) \simeq \DMT^{gm}_{/S, \Z}(\mo[n])
\]  
where $\on{Perf}$ denotes the category of perfect representation and
$DM^{gm}_{/S, \Z}$
the full subcategory of $\DMT_{/S, \Z}(\mo[n])$ of compact objects. We shall write simply
$G^{\bullet}(\Z)$ for $G^{\bullet}_{\Z,3}$.

Moreover the structural morphism $p_n : \mo[n] \ra S=\Spec(\Z)$ induces a surjective
morphism
\[
G^{\bullet}_{\Z,n} \st{\phi_n}{\lra} G^{\bullet}_{\Z,3} \lra 0.
\]
induced by the natural pull-back $p^*_n$ at the category level. Permutation of
the marked points on $\mo[n]$ induces an action of the symmetric group on $\Gb_{\Z,n}$.

Note that similarly, we define  $G^{\bullet}_{\Z,k_1,k_2}$ associated to
$\DMT^{gm}_{/S,\Z}(\mo[k_1]\times \mo[k_2])$.
\end{thm}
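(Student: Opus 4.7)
The plan is to apply directly Spitzweck's Theorem 8.4 from \cite{SpitCP1S} (the integral version of the result of \cite{DFGTMSpit}), which associates to any smooth $S$-scheme whose motive lies in the integral triangulated category of mixed Tate motives and satisfies the weak Beilinson-Soul\'e vanishing property an affine derived group scheme over $\Z$ whose category of perfect representations is equivalent to the category of compact mixed Tate motives. Theorem \ref{thm:M0nMTMBS} above provides both hypotheses for $\mo[n]$ (the stronger \eqref{BS} implies \eqref{wBS}), and Theorem \ref{BSZ} does the same for $S=\Spec(\Z)$, hence for $\mo[3]\simeq \Spec(\Z)$. This yields the existence of $\Gb_{\Z,n}$ for all $n\geqs 3$ and in particular of $\Gb(\Z):=\Gb_{\Z,3}$.

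Next, to produce the morphism $\phi_n : \Gb_{\Z,n}\lra \Gb_{\Z,3}$ induced by the structural morphism $p_n : \mo[n]\ra S$, I would use the functoriality of Spitzweck's construction: the pull-back functor
\[
p_n^* : \DMT^{gm}_{/S,\Z}(S) \lra \DMT^{gm}_{/S,\Z}(\mo[n])
\]
is a symmetric monoidal, exact functor preserving Tate objects and compact objects. Under the equivalences $\DMT^{gm}_{/S,\Z}(S)\simeq \Perf(\Gb(\Z))$ and $\DMT^{gm}_{/S,\Z}(\mo[n])\simeq \Perf(\Gb_{\Z,n})$, such a functor corresponds, by the derived tannakian dictionary of \cite{DFGTMSpit, ToenHHCS}, to a morphism of affine derived group schemes $\phi_n$. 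Surjectivity of $\phi_n$ translates into the fully faithfulness of $p_n^*$ on the Tate subcategories: this is the standard fact that $\Hom_{\DM(\mo[n])}(\Z(0),\Z(p)[k])\to \Hom_{\DM(\mo[n])}(p_n^*\Z_S(0),p_n^*\Z_S(p)[k])$ is an isomorphism (both sides compute motivic cohomology, and the pull-back is an isomorphism on the zeroth group by Remark \ref{rem:Himot} combined with the \eqref{BS} property of $\mo[n]$). More conceptually, since $p_n^*$ admits a retraction after composing with a (tangential) base point functor $\tilde x_v^{DM,*}$ from Proposition \ref{motivic-base-points}, the induced morphism of derived group schemes admits a section, which in particular forces surjectivity.

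The symmetric group action on $\Gb_{\Z,n}$ is obtained from the natural action of $\mf S_n$ on $\mo[n]$ by permutation of the marked points: each $\sigma\in\mf S_n$ is an automorphism of $\mo[n]$ over $S$, hence induces an auto-equivalence of $\DMT^{gm}_{/S,\Z}(\mo[n])$ preserving the monoidal, exact structure, and therefore an automorphism of $\Gb_{\Z,n}$ through the tannakian dictionary, giving the desired group action. Finally, the last claim about $\Gb_{\Z,k_1,k_2}$ follows by exactly the same mechanism applied to $\mo[k_1]\times\mo[k_2]$: K\"unneth formula together with Theorem \ref{thm:M0nMTMBS} insures that this product is mixed Tate and satisfies \eqref{BS}, so Spitzweck's Theorem 8.4 applies verbatim.

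The only delicate point is the identification of the tannakian formalism for derived group schemes in this integral $\Z$-coefficient setting, but this is precisely what Spitzweck establishes in \cite{DFGTMSpit, SpitCP1S} and no new ingredient is required; the geometric content of the theorem is entirely packed into Theorem \ref{thm:M0nMTMBS} proved in the previous section.
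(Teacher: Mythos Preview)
Your approach is exactly the paper's: the theorem is stated immediately after the sentence ``Using Theorem \ref{BSZ} and Theorem \ref{thm:M0nMTMBS}, we can apply directly Theorem 8.4 in \cite{SpitCP1S}'', and no further proof is given. Your elaboration on functoriality, the symmetric group action, and the product case is sound and matches how the paper uses these facts downstream.

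One small caveat: your first argument for surjectivity of $\phi_n$ is garbled --- the displayed map has identical source and target, and in any case full faithfulness of $p_n^*$ would require $\HH^k_{mot}(S,p)\to\HH^k_{mot}(\mo[n],p)$ to be an isomorphism, which fails already for $k=p=1$ (extra units on $\mo[n]$). Your second argument, via the section furnished by a tangential base point (Proposition \ref{motivic-base-points}, used again in Proposition \ref{prop:sectgbasepoint}), is the correct one and is what the paper relies on.
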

Defining $\Kb_{\Z,n}$ as the kernel of $\phi_{n}$, we obtain for any $n\geqs 4$ a
short exact sequence
\begin{align}\label{motSES}
0 \lra \Kb_{\Z,n} \lra \Gb_{\Z,n} \lra \Gb(\Z). \tag{SES$_n$}
\end{align}

The compatibility property with structural morphisms in Proposition
\ref{motivic-base-points} shows that this short exact sequence is split by
any choice of a (tangential) $S$-point of $\m_{0,n}$. We restrict ourselves to
the family of tangential based points in $P_{,\infty}$.

\begin{prop}\label{prop:sectgbasepoint}
Let $x_v$ be a tangential based point $\mo[n]$ ($n\geqs 4$) in
$P_{n,\infty}$. Then $x_v$ induces a splitting
\[
\begin{tikzpicture}
\matrix (m) [matrix of math nodes,
 row sep=2em, column sep=2em,
 text height=2.0ex,  text depth=0.25ex] 
{0 & \Kb_{\Z,n} & \Gb_{\Z,n} & \Gb(\Z) & 0  \\
};
\path[->,font=\scriptsize]
(m-1-1) edge  (m-1-2)
(m-1-2) edge  (m-1-3)
(m-1-4) edge  (m-1-5)
(m-1-3) edge  node[below] {$\phi_n$}(m-1-4)
(m-1-4) edge[bend right] node[above] {$\tilde x_v$}  (m-1-3)
;
\end{tikzpicture}
\]
\end{prop}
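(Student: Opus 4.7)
The plan is to transfer the functor $\tilde x_v^*$ constructed in Proposition \ref{motivic-base-points} across the equivalence of Theorem \ref{thm:DMTM0n} to obtain a morphism $\Gb(\Z)\to \Gb_{\Z,n}$ of affine derived group schemes, and then to read off the splitting property from the compatibility of $\tilde x_v^*$ with the structural pullback.

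First, I would observe that $\tilde x_v^*$ is symmetric monoidal, triangulated and exact, and sends compact objects to compact objects: it is defined as the composition of Spitzweck's limit motive functor $\mc L_{X,J}^{DM}$ (symmetric monoidal by its construction via $i_J^* j_{X^0\,*}\MZ{X^0}\simeq p^0_{N_J\,*}\MZ{N_J^0}$ and \cite[Corollary 15.14 and Proposition 15.22]{OAMSpit}) with the pullback along the $S$-point $\sigma : S \to N_J^0$ picked by $x_v$, both of which preserve Tate objects. Restricting to $\DMT^{gm}_{/S,\Z}(\mo[n])$ therefore yields a tensor-triangulated exact functor
\[
\tilde x_v^* : \DMT^{gm}_{/S,\Z}(\mo[n]) \lra \DMT^{gm}_{/S,\Z}(S) \simeq \on{Perf}(\Gb(\Z)).
\]

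Next, I would invoke the functoriality of Spitzweck's equivalence $\on{Perf}(\Gb_{\Z,n}) \simeq \DMT^{gm}_{/S,\Z}(\mo[n])$ from \cite[Theorem 8.4]{SpitCP1S} (building on \cite[Theorem 2.2]{DFGTMSpit}): tensor-triangulated exact functors between categories of perfect representations correspond contravariantly to morphisms of the underlying affine derived group schemes (a restriction of representations along the morphism). Applying this to $\tilde x_v^*$ produces a morphism
\[
\tilde x_v : \Gb(\Z) \lra \Gb_{\Z,n}.
\]
The morphism $\phi_n$ is by definition the image under the same equivalence of the structural pullback $p_n^* : \DMT^{gm}_{/S,\Z}(S) \to \DMT^{gm}_{/S,\Z}(\mo[n])$, so the last assertion of Proposition \ref{motivic-base-points}, namely that $\tilde x_v^* \circ p_n^*$ is isomorphic to the identity of $\DMT^{gm}_{/S,\Z}(S)$, translates into
\[
\phi_n \circ \tilde x_v \;=\; \id_{\Gb(\Z)}.
\]
This is precisely the sought-for splitting of \eqref{motSES}; the existence of $\tilde x_v$ then also forces $\Gb_{\Z,n} \simeq \Kb_{\Z,n} \rtimes \Gb(\Z)$ at the level of affine derived group schemes.

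The main obstacle in carrying this out rigorously is checking that the equivalence of Theorem \ref{thm:DMTM0n} is functorial in the required symmetric monoidal sense in the integral/derived setting. Concretely, one has to unpack Spitzweck's description of $\Gb_{\Z,n}$ as the spectrum of an $E_{\infty}$-coalgebra built from a cycle/cobar construction and verify that the symmetric monoidal functor $\tilde x_v^*$ induces the expected morphism of $E_{\infty}$-coalgebras, compatibly with the unit morphisms coming from $p_n^*$. Once this functoriality is in place, the conclusion is formal.
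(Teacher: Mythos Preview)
Your proposal is correct and follows the same approach as the paper. The paper does not give a separate proof for this proposition; its entire argument is the sentence immediately preceding the statement, namely that the compatibility of $\tilde x_v^*$ with the structural pullback $p_n^*$ established in Proposition~\ref{motivic-base-points} yields the splitting, and you have simply spelled out this one-line remark in detail (including the passage through the equivalence of Theorem~\ref{thm:DMTM0n}, which the paper leaves implicit).
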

\begin{prop}\label{prop:sescompatiforget}
The morphisms $\psi_{n,i} : \mo[n] \lra \mo[n-1]$ forgetting the $i$-th marked
points induces a commutative diagram :
\[
\begin{tikzpicture}
\matrix (m) [matrix of math nodes,
 row sep=2em, column sep=2em,
 text height=2.0ex,  text depth=0.25ex] 
{0 & \Kb_{\Z,n} & \Gb_{\Z,n} & \Gb(\Z) & 0  \\
0 & \Kb_{\Z,n-1} & \Gb_{\Z,n-1} & \Gb(\Z) & 0  \\
};
\path[->,font=\scriptsize]
(m-1-1) edge  (m-1-2)
(m-1-2) edge  (m-1-3)
(m-1-4) edge  (m-1-5)
(m-1-3) edge  node[below] {$\phi_n$}(m-1-4)
;
\path[->,font=\scriptsize]
(m-2-1) edge  (m-2-2)
(m-2-2) edge  (m-2-3)
(m-2-4) edge  (m-2-5)
(m-2-3) edge  node[below] {$\phi_{n-1}$}(m-2-4)
(m-1-2) edge  node[left] {$\tilde \psi_{n,i}$}(m-2-2)
(m-1-3) edge  node[left] {$\psi_{n,i}$}(m-2-3)
;
\path[font=\scriptsize]
(m-1-4) edge[double]  (m-2-4);
;
\end{tikzpicture}
\]
where by an abuse of notation the morphisms between derived groups are denoted
as the morphisms between schemes.
\end{prop}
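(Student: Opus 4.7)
The plan is to deduce the diagram from the functoriality of Spitzweck's construction applied to the factorization of the structural morphisms. The forgetful map $\psi_{n,i} : \mo[n] \lra \mo[n-1]$ is a morphism of smooth $S$-schemes, so it induces a pull-back functor $\psi_{n,i}^{*} : \DMZ{\mo[n-1]} \lra \DMZ{\mo[n]}$ which sends Tate objects to Tate objects and preserves compact objects; hence it restricts to a triangulated symmetric monoidal functor $\DMT^{gm}_{/S,\Z}(\mo[n-1]) \lra \DMT^{gm}_{/S,\Z}(\mo[n])$. Through the equivalences $\Perf(\Gb_{\Z,n}) \simeq \DMT^{gm}_{/S,\Z}(\mo[n])$ of Theorem~\ref{thm:DMTM0n}, this pull-back is realized as restriction of representations along a morphism of affine derived group schemes $\Gb_{\Z,n} \lra \Gb_{\Z,n-1}$, which supplies the middle vertical arrow of the diagram.

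Next I would verify commutativity of the right-hand square. The structural morphisms factor as $p_n = p_{n-1} \circ \psi_{n,i}$, and this factorization yields a canonical isomorphism of pull-back functors
\[
p_n^{*} \simeq \psi_{n,i}^{*} \circ p_{n-1}^{*} : \DMT^{gm}_{/S,\Z}(S) \lra \DMT^{gm}_{/S,\Z}(\mo[n]).
\]
Applying the equivalence of Theorem~\ref{thm:DMTM0n} to this factorization, and using the functoriality of Spitzweck's bar construction producing the derived group, the induced composition at the group level gives exactly $\phi_n = \phi_{n-1} \circ \psi_{n,i}$. In other words the right square commutes with the identity on the common quotient $\Gb(\Z)$.

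Finally, once the right square commutes, the morphism $\tilde\psi_{n,i}$ on the middle terms restricts canonically to a morphism $\Kb_{\Z,n} \lra \Kb_{\Z,n-1}$ between the kernels of $\phi_n$ and $\phi_{n-1}$ by the universal property of kernels in the category of affine derived group schemes, and the left square automatically commutes. The main technical point requiring care is the functoriality, in the smooth scheme $X/S$, of Spitzweck's equivalence $\Perf(\Gb_X) \simeq \DMT^{gm}_{/S,\Z}(X)$: one has to check that the $E_{\infty}$-coalgebra computed by the bar resolution of the unit object in Theorem~8.4 of \cite{SpitCP1S} is compatible with pull-back along $\psi_{n,i}^{*}$, and that the distinguished augmentation coming from $p_n^{*}$ is transported to the distinguished augmentation coming from $p_{n-1}^{*}$. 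This compatibility is essentially built into the construction in \cite{DFGTMSpit, SpitCP1S}, but it is the step that deserves explicit verification.
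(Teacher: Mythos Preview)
Your argument is correct and follows essentially the same approach as the paper: the paper's proof invokes the functoriality of the pull-back functors to get commutativity of the right square at the level of $\DMT^{gm}_{/S,\Z}$, then passes through the description of these categories as categories of (co)modules over an $E_\infty$-algebra to obtain the morphism $\psi_{n,i}$ on derived groups compatible with $\phi_n$ and $\phi_{n-1}$, and finally deduces the induced morphism on kernels. Your write-up simply makes each of these steps more explicit---in particular the factorization $p_n = p_{n-1}\circ\psi_{n,i}$ and the universal property of kernels---and rightly flags the functoriality of Spitzweck's equivalence as the point deserving care.
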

\begin{proof}
The functoriality of the pull-back functors makes the equivalent of the right
square commutes between the categories $\DMT_{/S,\Z}^{gm}$.  Considering these
categories as
categories of perfect representation, that is as categories of
(co)modules over an $E_{\infty}$ algebra, this
functors induce morphisms between the affine derived group scheme
\[
 \psi_{n,i} :\Gb_{\Z,n} \lra  \Gb_{\Z,n-1}.
\]

compatible with $\phi_n$ and $\phi_{n-1}$ together with 
 an induced morphism on the kernels. 
\end{proof}

\begin{prop}\label{prop:sescompatigluing}
Let $D$ be an irreducible component of $\partial \mob[n]$ and $D_0$ its open
strata (cf. Section \ref{subsec:boundary}). $D_0$ is isomorphic to 
 \[
D_0\sim \mo[n_1]\times \mo[n_2]
\]
with $n_1+n_2=n+2$. 

The inclusion $i_D : \mob[n_1]\times \mob[n_2] \lra \mob[n]$ induces a gluing morphism :
\[
i_{n_1,n_2, D} : \Gb_{\Z,n_1,n_1} \lra  \Gb_{\Z,n}
\]
and 
\[
\tilde i_{n_1,n_2, D} : \Kb_{\Z,n_1,n_2} \lra  \Kb_{\Z,n}.
\]
Moreover the projections $\mo[n_1] \times \mo[n_2] \lra \mo[n_i]$ induce
morphisms 
\[
p_{n_1,n_2} : \Gb_{\Z, n_1,n_2}\lra \Gb_{\Z,n_1}\times \Gb_{\Z,n_2} 
\]
 and 
\[
\tilde p_{n_1,n_2} : \Kb_{\Z, n_1,n_2}\lra \Kb_{\Z,n_1}\times \Kb_{\Z,n_2}
\]
The above morphisms makes the diagrams below commute but the bottom line \emph{is
not} necessarily exact
\[
\begin{tikzpicture}
\matrix (m) [matrix of math nodes,
 row sep=2.5em, column sep=2.5em,
 text height=2.0ex,  text depth=0.25ex] 
{0 &\Kb_{\Z,n} & \Gb_{\Z,n} & \Gb(\Z) &0  \\
0& \Kb_{\Z, n_1,n_2} & \Gb_{\Z,n_1,n_2} & \Gb(\Z) & 0 \\
 & \Kb_{\Z,n_1} \times \Kb_{\Z,n_2} & \Gb_{\Z,n_1}\times \Gb_{\Z,n_2} & \Gb(\Z) &   \\
};
\path[->,font=\scriptsize]
(m-1-1) edge  (m-1-2)
(m-1-2) edge  (m-1-3)
(m-1-4) edge  (m-1-5)
(m-1-3) edge  node[below] {$\phi_n$}(m-1-4)
;
\path[->,font=\scriptsize]
(m-2-1) edge  (m-2-2)
(m-2-2) edge  (m-2-3)
(m-2-4) edge  (m-2-5)
(m-2-3) edge  node[below] {$\phi_{n-1}$}(m-2-4)
(m-2-2) edge  node[left] {$\tilde i_{n_1,n_2,D}$}(m-1-2)
(m-2-3) edge  node[left] {$i_{n_1,n_2,D}$}(m-1-3)
(m-3-2) edge  (m-3-3)
(m-3-3) edge  (m-3-4)
(m-2-2) edge  node[auto]{$p_{n_1,n_2}$}(m-3-2)
(m-2-3) edge  node[auto]{$p_{n_1,n_2}$}(m-3-3)
;
\path[->, font=\scriptsize]
(m-2-4) edge node[fill=white]{$=$}  (m-1-4)
(m-2-4) edge node[fill=white]{$=$}  (m-3-4);
\end{tikzpicture}
\]
As previously, by an abuse of notation, the morphisms between derived groups are denoted
as 
the morphisms between the scheme.

\end{prop}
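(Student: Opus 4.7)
The plan is to construct each of the morphisms on the level of mixed Tate categories, where all the data are natural, and then translate through the Tannakian equivalence $\on{Perf}(\Gb_{\Z,\star}) \simeq \DMT^{gm}_{/S,\Z}(\star)$ of Theorem \ref{thm:DMTM0n}. Under this equivalence, an exact symmetric monoidal functor compatible with structural pull-backs dualizes to a morphism of affine derived group schemes compatible with the projections $\phi$ onto $\Gb(\Z)$.

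First, for the \emph{gluing morphism}, I would apply Proposition \ref{divisorfunctor} to the pair $(D, \mo[n])$. This yields a natural functor
\[
\mc L_{D,\mo[n]} : \DMT_{/S,\Z}(\mo[n]) \lra \DMT_{/S,\Z}(D_0),
\]
and under the isomorphism $D_0 \simeq \mo[n_1]\times \mo[n_2]$ the right-hand side is $\DMT_{/S,\Z}(\mo[n_1]\times \mo[n_2])$. Restricting to compact objects and dualizing, one obtains $i_{n_1,n_2,D} : \Gb_{\Z,n_1,n_2} \lra \Gb_{\Z,n}$. The second statement of Proposition \ref{divisorfunctor}, namely that the composite of $\mc L_{D,\mo[n]}$ with the structural pull-back from $\DMT_{/S,\Z}(S)$ equals the structural pull-back to $D_0$, translates exactly into the commutation $\phi_n \circ i_{n_1,n_2,D} = \phi_{n_1,n_2}$, which is the upper (right) commuting square of the diagram. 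A standard diagram chase (or the naturality of kernels) then produces the morphism $\tilde i_{n_1,n_2,D}$ on the kernels and the upper (left) commuting square.

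Second, for the \emph{projection morphism}, the two projections $\pi_i : \mo[n_1]\times \mo[n_2] \lra \mo[n_i]$ are smooth morphisms, so Spitzweck's six-functor formalism (Remark \ref{6functorformalisme}) supplies pull-back functors $\pi_i^*$ which preserve Tate objects and commute with structural pull-backs. Combining the two by external product gives a symmetric monoidal functor on mixed Tate categories whose Tannakian dual is the required morphism $p_{n_1,n_2} : \Gb_{\Z,n_1,n_2} \lra \Gb_{\Z,n_1}\times \Gb_{\Z,n_2}$, again compatible with the projection onto $\Gb(\Z)$ by functoriality of structural pull-backs; this yields the lower (right) commuting square and, by diagram chasing, the induced morphism $\tilde p_{n_1,n_2}$ and the lower (left) square.

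The main obstacle is less a deep technicality than a careful bookkeeping point: one must verify that the limit-motive functor $\mc L_{D,\mo[n]}$ of Proposition \ref{divisorfunctor}, which is built from Spitzweck's specialization construction via the deformation to the normal bundle $N_{D_0}^0$ and the non-zero section provided by Proposition \ref{NDtrivial}, does indeed land in $\DMT_{/S,\Z}(D_0)$ in a manner compatible with the Künneth decomposition coming from $D_0 \simeq \mo[n_1]\times \mo[n_2]$. Once this identification is secured, all commutativities are formal consequences of the functoriality of pull-backs and of the second clause of Proposition \ref{divisorfunctor}. Note that, as already stated, no exactness claim is made for the bottom row of groups: it would require an analogue of \eqref{motSES} for $\Gb_{\Z,n_1}\times \Gb_{\Z,n_2}$, which the proof does not attempt.
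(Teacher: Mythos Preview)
Your proposal is correct and follows essentially the same approach as the paper: invoke Proposition \ref{divisorfunctor} to obtain the functor $\DMT_{/S,\Z}(\mo[n])\to\DMT_{/S,\Z}(D_0)\simeq\DMT_{/S,\Z}(\mo[n_1]\times\mo[n_2])$ and dualize to get $i_{n_1,n_2,D}$, then use the pull-backs along the two projections $\mo[n_1]\times\mo[n_2]\to\mo[n_i]$ to obtain $p_{n_1,n_2}$. The paper's own proof is considerably terser (two sentences), omitting the verification of the commutativities and the passage to the kernels that you spell out, but the underlying argument is the same.
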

\begin{proof}
First of all, Proposition \ref{divisorfunctor} gives us a functor
\[
DM(\mo[n])\lra DM(D_0) = DM(\mo[n_1]\times \mo[n_2])
\]
inducing a morphism on the groups
\[
\Gb_{\Z,n_1, n_2} \lra \Gb_{\Z,n}.
\]
Then, the projections $\mo[n_1]\times \mo[n_2] \lra \mo[n_i]$ induce
morphisms $\Gb_{\Z,n_1,n_2} \ra \Gb_{\Z,n_i}$.
\end{proof}
\subsection{A motivic Grothendieck-Teichmüller group}
The family of the derived groups $\Kb_n$ endowed with the gluing morphism 
$\tilde i_{n_1,n_2,D}$ and forgetful morphisms $\tilde \psi_{n, i}$ is close to an operadic
structure.

We can now define the integral derived
motivic Grothendieck-Teichmüller group over $\Z$ as follows.
\begin{defn}[motivic Grothendieck-Teichmüller ($S=\Spec(\Z)$, $\Z$
  coefficent)]\label{GTderb} 
  Let $GT_{\Z}^{\bullet}(S)$ be the \marginote{derived ?} groups of
  automorphisms $g$; that is of auto equivalence ; 
   of the tower of the derived groups $(\Kb_n)_{n\geqs 4}\cup
   ((\Kb_{n_1,n_2})_{n_1,n_2\geqs 4})$; 
  such that  $g$ is given by two collections of morphisms $(g_n)_{n\geqs 4}$ and
  $(g_{n_1,n_2})_{n_1,n_2 \geqs 4}$ such that each
  $g_n$ (resp. $g_{n_1,n_2}$) is an automorphism of $\Kb_n$ (resp. $\Kb_{n_1,n_2}$) and
  the $g_n$'s and the $g_{n_1,n_2}$'s commute with the action of the symmetric
  group on $\Kb_n$ and with morphisms 
  $\tilde i_{n_1,n_2, D}$, $\tilde p_{n_1,n_2,D}$  and $\tilde \psi_{n,i}$.
\end{defn} 
\begin{rem}
It seems that in \cite{Schneps-HFGMGT} it is only required that the morphism
$g_n$ preserves the image of morphisms $\tilde i_{n_1,n_2,D}$. If the approach
in \cite{Schneps-HFGMGT}
is more workable, it is not as precise in its geometric implication as the one
described here.
\end{rem}
\section{Comparison with classical motivic constructions}
\label{rationalcoefsetting}
In this section we develop how the above situation evolves in the more
classical setting of rational coefficients and when working over a number field. 
\subsection{Rational coefficients}\label{rationalcoef}
In this subsection we describe the situation with rational coefficients. The main
advantage is that the weight $t$-structure is then available and allow us to use
the tannakian formalism on the \emph{non derived} category of mixed Tate motive.

In order to work with rational coefficients one can consider the Beilinson
spectrum  $H_{B,S}$ (see \cite[Definition 13.1.2]{CiDegTCM}) and work with the
homotopy category of module over it as our derived categories of motive with $\Q$
coefficients $\DMZ[\Q]{S}$ (resp. $\DMZ[\Q]{\mo[n]}$ for $n\geqs 4$).  All the
proof of the needed statement in the previous sections work with the same
argument. Another way to work with rational coefficients is to work with the
rationalization $\MZ[\Q] S$ of M. Spitzweck's spectrum $\MZ S$. Both approaches
are equivalent 
thanks to Theorem 7.14 and Theorem 7.18 in \cite{SpitCP1S} which gives an
isomorphism
\[
\MZ[\Q]{S} \simeq H_{B,S};
\]
and more generally, using the pull-back by the structural morphisms $\MZ[\Q]{X}
\simeq H_{B,X}$. 

Tate objects $\MZ[\Q]{X}(n)$ will simply be denoted by $\Q_X(n)$ when there is no
need to insist on the spectrum they are coming from and simply by $\Q(n)$ when
it is clear enough what $X$ is. As previously, the derived category of mixed
Tate motives $\DMT_{/S,\Q}(X)$ is defined as the full triangulated subcategory
\marginote{The compactness question is not clear so far.} of  $\DMZ[\Q]{X}$
generated by Tate objects $\Q_X(n) $ for $n \in \Z$. 

Hence, from the previous section, we have a family of diagrams between derived
categories of mixed Tate motives with rational coefficients ($S=\Spec(\Z)$)
\begin{equation}\label{diagDMT}
\begin{tikzpicture}[baseline]
\matrix (m) [matrix of math nodes,
 row sep=3em, column sep=3em,
 text height=2.0ex,  text depth=0.25ex] 
{\DMT_{/S, \Q}(\mo[n-1]) &  \DMT_{/S, \Q}(S)  \\
\DMT_{/S, \Q}(\mo[n]) &  \DMT_{/S, \Q}(S) \\
\DMT_{/S, \Q}(\mo[n_1]\times \mo[n_2]) &  \DMT_{/S, \Q}(S) \\};
\path[->]
(m-1-1) edge node[left, mathsc]{\phi_{n,i}^*} (m-2-1)
(m-1-2) edge node[mathsc, above]{\phi_{n-1}^*} (m-1-1)
(m-2-2) edge node[mathsc, above]{\phi_{n}^*} (m-2-1)
(m-3-2) edge node[mathsc, below]{\phi_{n_1,n_2}^*} (m-3-1)
(m-2-1) edge node[mathsc, left]{\mc L_{D, \mo[n]}} (m-3-1)
(m-1-2) edge node [right, mathsc]{=} (m-2-2) 
(m-2-2) edge node [right, mathsc]{=} (m-3-2)
(m-1-1) edge[dotted, bend right] node[mathsc, fill=white] {\tilde x_{v'}^*} (m-1-2)
(m-2-1) edge[dotted, bend right] node[mathsc, fill=white] {\tilde x_{v}^*}(m-2-2)
;
\end{tikzpicture}
\end{equation}
for any $n \geqs 4$, $n_1+n_2=n+2$ and $D$ a closed codimension $1$ strata of
$\mob[n]$. In the above diagram the tangential base points $x_v$ and $x_{v'}$ are in
$P_{n, \infty}$ and in $P_{n-1, \infty}$ respectively. Moreover $x_v$ and
$x_{v'}$ are  compatible ; that is
$\phi_{n,i}(v)=v'$ and $\on{d}\phi_{n,i}(x_v)=x_{v'}$.

In order to pass to the non derived category we need to make some comments about
the $t$-structure defined in \cite{LevTM} when working over a field (see also
\cite{KTMMLevine}. As
remarked in \cite{LEVTMFG} the arguments of \cite{LevTM} go through provided that
the Beilinson-Soulé vanishing property \eqref{BS} holds. Hence when $X$ over $S$
(and $S$) satisfies
\eqref{BS} property, we obtain a tannakian category $\MTM_{/S}(X)$ of mixed Tate
motives over $X$ as the heart of $\DMT_{/S, \Q}$ by the $t$-structure (duality
and tensor structure are inherited from the one in $\DMT$). The fiber functor is
induced  by the weight graded piece $\on{Gr}_W^*$ :
\[
 \omega : M \longmapsto \bigoplus_n \Hom(\Q_X(n),\Gr_n^W(M)).
\]

Diagram \eqref{diagDMT} is compatible with the $t$-structure and induces a
similar diagram between tannakian category of mixed Tate motive
$\MTM_{/S}(\text{--})$. The tannakian formalism \marginote{and the weight filtration ?}
allows us to identify the 
categories $\MTM_{/S}(\text{--})$ with the categories of graded representations
of graded pro-unipotent affine algebraic group \marginote{over $\Q$ ? over $\Z$
  ? $S$ ?} $G_{/S,n}$ and $G_{/S,n_1,n_2}$. We may drop the subscript $/S$ when
the base scheme is clear enough and simply write $G_n$, $G_{n_1,n_2}$. As in
the above section, $G_{/S,3}$ is denoted by $G(S)$ as it corresponds to
$\MTM_{/S}(S)$. Groups $G_{/S,n}$ are sometime refereed as motivic fundamental
groups of $\mo[n]$. However we prefer to use the expression \emph{tannakian
  groups} of $\mo[n]$ as these groups are obtained from the categories
$\MTM_{/S}(\mo[n])$ by the tannakian formalism. Hence diagram \eqref{diagDMT}
leads to a diagram of groups
\begin{equation}
\begin{tikzpicture}[
baseline={(current bounding box.center)}]
\matrix (m) [matrix of math nodes,
 row sep=2.5em, column sep=2.5em,
 text height=2.0ex,  text depth=0.25ex] 
{ 0 & K_{n-1} & G_{n-1} & G(S) & 0 \\ 
0 &K_{n} & G_{n} & G(\Z) &0  \\
0& K_{n_1,n_2} & G_{n_1,n_2} & G(\Z) & 0 \\
 & K_{n_1} \times K_{n_2} & G_{n_1}\times G_{n_2} & G(\Z)    \\
};
\path[->]
(m-1-1) edge  (m-1-2)
(m-1-2) edge  (m-1-3)
(m-1-4) edge  (m-1-5)
(m-1-3) edge  node[below,mathsc] {\phi_{n-1}}(m-1-4)
(m-2-1) edge  (m-2-2)
(m-2-2) edge  (m-2-3)
(m-2-4) edge  (m-2-5)
(m-2-3) edge  node[below,mathsc] {\phi_n}(m-2-4)
(m-1-4) edge[dotted, bend right] node[mathsc,above]{\tilde x_{v'}^*} (m-1-3)
(m-2-4) edge[dotted, bend right] node[mathsc,above]{\tilde x_{v}^*} (m-2-3)
(m-2-3) edge  node[auto,mathsc] {\phi_{n,i}}(m-1-3)
(m-2-2) edge  node[auto,mathsc] {\tilde \phi_{n,i}}(m-1-2)
(m-3-1) edge  (m-3-2)
(m-3-2) edge  (m-3-3)
(m-3-4) edge  (m-3-5)
(m-3-3) edge  node[below,mathsc] {\phi_{n-1}}(m-3-4)
(m-3-2) edge  node[left,mathsc] {\tilde i_{n_1,n_2,D}}(m-2-2)
(m-3-3) edge  node[left,mathsc] {i_{n_1,n_2,D}}(m-2-3)
(m-4-2) edge  (m-4-3)
(m-4-3) edge  (m-4-4)
(m-3-2) edge  node[auto,mathsc]{\tilde p_{n_1,n_2}}(m-4-2)
(m-3-3) edge  node[auto,mathsc]{p_{n_1,n_2}}(m-4-3)
(m-3-4) edge node[fill=white,inner sep=2.5pt, mathsc]{=}  (m-4-4)
(m-3-4) edge node[fill=white,inner sep=2.5pt, mathsc]{=}  (m-2-4)
(m-2-4) edge node[fill=white,inner sep=2.5pt, mathsc]{=}  (m-1-4);
\end{tikzpicture}
\end{equation}
where the three first line are exact.
\begin{defn}[motivic Grothendieck-Teichmüller ($S=\Spec(\Z)$, $\Q$
  coefficent)]\label{GTderbQcoeff} 
  Let $GT^{mot}(S)$ be the  groups of
  automorphisms $g$ 
   of the tower of the  groups 
\[
(K_n)_{n\geqs 4}\cup
   ((K_{n_1,n_2})_{n_1,n_2\geqs 4});
\] 
  such that  $g$ is given by two collections of morphisms $(g_n)_{n\geqs 4}$ and
  $(g_{n_1,n_2})_{n_1,n_2 \geqs 4}$ such that each
  $g_n$ (resp. $g_{n_1,n_2}$) is an automorphism of $K_n$ (resp. $K_{n_1,n_2}$) and
  the $g_n$'s and the $g_{n_1,n_2}$'s commute with the action of the symmetric
  group on $K_n$ and with morphisms 
  $\tilde i_{n_1,n_2, D}$, $\tilde p_{n_1,n_2,D}$  and $\tilde \psi_{n,i}$.
\end{defn} 

\subsection{Working over a number field and its ring of integers}
\label{numberfiel}
Working over the integer allows us to obtain a nice and general
description. However working over a number fields would allow us to have a 
concrete description of the above groups or of their Hopf algebraic avatar (in
the tannakian formalism) in terms of algebraic cycles as described in
\cite{LEVTMFG}. However, before explaining this, we will take the opportunity to
compare the above category of mixed Tate motives over $\Z$
($\MTM_{/\Spec(\Z)}(\Spec(\Z))$) with the one defined by Goncharov and Deligne in
\cite{DG} which is defined as a subcategory of $\MTM_{/\Spec(\Q)}(\Spec(\Q))$. 

The structural morphism $ p_{\Q} : \Spec(\Q) \lra \Spec(\Z)$ induces a functor
on the derived motivic categories
\[
p_{\Q}^* : \DMZ[/\Spec(\Z),\Q]{\Spec(\Z)} \lra \DMZ[\Spec(\Z),\Q]{\Spec(\Q)}
\] 
sending Tate object to Tate objects and compatible the $t$-structure. Hence it
induces a functor between mixed Tate categories
\[
p_{\Q}^* : \MTM_{/\Spec{\Z}}(\Spec(\Z)) \lra
\MTM_{/\Spec(\Z)}(\Spec(\Q))=\MTM_{/\Spec(\Q)}(\Spec(\Q)). 
\]
Using Remark \ref{BSnumberfield}, the same holds when $\Z$ is replaced by
$\mc O_{F,\mc P}$, the ring
of $\mc P$-integers of a number field $\F$ (here $\mc P$ denotes a set of finite place
of $F$); see \cite{DG, MPMTMGon}: 
\[
p_{\F, \mc P}^* : \MTM_{/S}(S) \lra
\MTM_{/S}(\Spec(\F))=\MTM_{/\Spec(\F)}(\Spec(\F)). 
\]
where $S=\Spec(\mc O_{\F,\mc P})$.

The functor $p_{\F, \mc P}$ sends the Tate object $\Q_S(i)$ to $\Q_{\F}(i)$ and
induces on the extension groups the inclusion 
\begin{multline*}
\mc O_{\F, \mc P}^*\otimes \Q=\Ext_{\MTM_{/S}(S)}(\Q_S(0),\Q_S(1))
\lra \F^*\otimes \Q= \\\Ext_{\MTM_{/\Spec(\F)}(\Spec(\F))}(\Q_{\F}(0),\Q_{\F}(1)).
\end{multline*}
Hence it induces an equivalence between the category $\MTM_{/S}(S)$ and the
category $\MTM^{DG}(\mc O_{\F ,\mc P})$ previously defined by Deligne and
Goncharov (see \cite[{\S 1.4 and 1.7}]{DG}) as 
the sub tannakian category of $\MTM_{/\Spec(\F)}(\Spec(\F))$ such that the
coaction $\Ext(\Q_{\F}(0), \Q_{\F}(1))$ on the canonical fiber functor factors
through $\mc O_{F,\mc P}$.  
\begin{prop}There is an equivalence of category
\[
\MTM_{/S}(S)\simeq \MTM^{DG}(\mc O_{\F, \mc P}).
\]
\end{prop}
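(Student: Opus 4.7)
The plan is to verify that the functor $p_{\F,\mc P}^*$ already constructed is fully faithful and has essential image equal to $\MTM^{DG}(\mc O_{\F,\mc P})$, using the tannakian formalism together with the comparison of extension groups in each weight. Both categories are neutral tannakian $\Q$-linear categories equipped with the weight filtration, and a tensor-exact functor preserving the fibre functor $\omega = \bigoplus_n \Hom(\Q(n),\Gr_n^W(-))$ is an equivalence onto a full tannakian subcategory if and only if it is fully faithful.

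The first step is to observe that in a mixed Tate category, morphisms and extensions between arbitrary objects reduce via the weight filtration to the computation of $\Ext^i(\Q(0),\Q(n))$ for $i=0,1$ and $n\geqs 0$. On both sides one has $\Ext^0(\Q(0),\Q(n))=0$ for $n\neq 0$ and $\Q$ for $n=0$, while Proposition \ref{motcoh}, Remark \ref{rem:Himot} and the isomorphism \eqref{HmotHCG} identify
\[
\Ext^1_{\MTM_{/S}(S)}(\Q(0),\Q(n)) \simeq \CH^n(S,2n-1)\otimes\Q \simeq K_{2n-1}(\mc O_{\F,\mc P})\otimes \Q,
\]
and similarly on the $\F$-side one gets $K_{2n-1}(\F)\otimes\Q$. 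By Borel's theorem together with the localization sequence in $K$-theory, the inclusion $\mc O_{\F,\mc P}\hra \F$ induces an isomorphism $K_{2n-1}(\mc O_{\F,\mc P})\otimes\Q \ira K_{2n-1}(\F)\otimes\Q$ for every $n\geqs 2$, while in weight $n=1$ it is exactly the inclusion $\mc O_{\F,\mc P}^*\otimes\Q \hra \F^*\otimes\Q$ already stated.

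The second step uses these computations to conclude that $p_{\F,\mc P}^*$ is fully faithful. Indeed, for any two objects $M,N \in \MTM_{/S}(S)$ the groups $\Hom(M,N)$ and $\Ext^1(M,N)$ are computed from the graded pieces of $M$ and $N$ and the $\Ext^i(\Q(0),\Q(n))$; since $p_{\F,\mc P}^*$ sends Tate object to Tate object and induces isomorphisms on $\Hom$'s and on $\Ext^1$'s in all weights $n\geqs 2$, the only possible defect is in weight $1$. One then shows by d\'evissage on the weight filtration that $p_{\F,\mc P}^*$ stays fully faithful onto the full subcategory of $\MTM_{/\Spec(\F)}(\Spec(\F))$ whose Ext${}^1(\Q(0),\Q(1))$ classes land in $\mc O_{\F,\mc P}^*\otimes \Q$; equivalently, objects whose successive extensions in weight $1$ are only by $\mc O_{\F,\mc P}^*\otimes\Q$.

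The final step is to match this subcategory with $\MTM^{DG}(\mc O_{\F,\mc P})$. By definition the latter consists of those $M\in\MTM_{/\Spec(\F)}(\Spec(\F))$ whose coaction on $\omega(M)$ factors through the quotient of the tannakian Hopf algebra in which the weight $(0,1)$-extension part is replaced by $\mc O_{\F,\mc P}^*\otimes\Q\subset \F^*\otimes\Q$, so both subcategories are characterized by the same extension condition in weight $1$ and coincide. The main obstacle, and the only non-formal input, is the $K$-theoretic comparison $K_{2n-1}(\mc O_{\F,\mc P})\otimes\Q \simeq K_{2n-1}(\F)\otimes\Q$ for $n\geqs 2$; once this is granted the rest is an exercise in tannakian bookkeeping, using that mixed Tate categories are rigid enough to be reconstructed from $\Ext^1(\Q(0),\Q(n))$.
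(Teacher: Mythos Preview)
Your proof is correct and follows the same approach as the paper, which in fact gives no separate proof: the argument is the paragraph immediately preceding the proposition, where the author observes that $p_{\F,\mc P}^*$ sends Tate objects to Tate objects and induces on $\Ext^1(\Q(0),\Q(1))$ the inclusion $\mc O_{\F,\mc P}^*\otimes\Q\hookrightarrow\F^*\otimes\Q$, then invokes Deligne--Goncharov's definition. You have spelled out what the paper leaves implicit, namely the comparison $K_{2n-1}(\mc O_{\F,\mc P})\otimes\Q\simeq K_{2n-1}(\F)\otimes\Q$ for $n\geqs 2$ and the tannakian reconstruction step.

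One point worth making explicit in your write-up: the claim that a mixed Tate category ``is reconstructed from $\Ext^1(\Q(0),\Q(n))$'' requires the vanishing of $\Ext^2(\Q(0),\Q(n))$, equivalently that the pro-unipotent radical of the tannakian group is free. This holds for both $S=\Spec(\mc O_{\F,\mc P})$ and $\Spec(\F)$ by Borel's computation, and it is what makes the d\'evissage go through: injectivity on $\Ext^1$ in each weight then forces surjectivity of the induced map of tannakian groups, hence full faithfulness of $p_{\F,\mc P}^*$. Without $\Ext^2=0$ that implication would fail.
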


Working over a number field, that is over $S=\Spec(\F)$, Theorem
\ref{thm:M0nMTMBS} also holds and the moduli spaces over curves $\m_{0,n}$
($n\geqs 3$) have a motive in $\DMZ[/S,\Q]{S}$ and satisfy the \eqref{BS}
property. Hence Levine's works shows that the tannakian group $G_{n,\F}$ associated
to  $\MTM{/S}(\m_{0,n})$ is the spectrum of a Hopf algebra $H_n$ built from
algebraic cycles (see \cite{LEVTMFG}). More precisely, let $V_n^k(p)$ be
the $\Q$ vector space freely generated by  closed irreducible subvarieties
\[
Z \subset \m_{0,n}\times (\p^1 \sm \{1\})^{2p-k} \times \A^p 
\] 
such that the projection 
\[
\m_{0,n}\times (\p^1 \sm \{1\})^{2p-k} \times \A^p \lra 
\m_{0,n}\times (\p^1 \sm \{1\})^{2p-k} 
\]
restricted to $Z$ is dominant, flat and equidimensional of dimension $0$ (that is
quasi-finite).

The symmetric group $\Sigma_{2p-k}\rtimes (\Z/2\Z)^{2p-k}$ acts on $V_n^k(p)$ by
permutation of the $\p^1\sm\{1\}$ factors and inversion $t_i \mapsto 1/t_i$ on
the same factors. Let $Alt_{2p-k}$ be the corresponding alternating
projection. The symmetric group $\Sigma_p$ acts on $V_n^k(p)$ by permutation of
the $\A^1$ factors; let $Sym_p$ denotes the corresponding symmetric projection.

The vector space $\mc N_{n}^k(p)$ is defined as $Sym_p\circ
Alt_{2p-k}(V_n^k(p))$. For fixed $p$ they form a complex with differential
induced by intersection with faces of $(\p^1 \sm \{1\})^{2p-k}$ given by $t_i=0$
and $t_i=\infty$. Concatenation of factors and induced pull-back by the diagonal 
\[
\Delta_n : \m_{0,n} \lra \m_{0,n} \times \m_{0,n}
\]
induce a product structure on 
\[
\mc N_n=\bigoplus_p \left( \bigoplus_k \mc N_n^k(p)\right).
\] 

This endows $\mc N_n$ with the structure of differential graded commutative (and
associative) algebra for the cohomological degree $k$ in superscript. Following
M. Levine in \cite{LEVTMFG}, we obtain:
\begin{coro}\label{Gn-algcycles}
Let $H_n$ be Hopf algebra given by the $H^0$ of the (associative) bar
construction of $\mc N_n$ (see \cite{BKMTM,LEVTMFG,SouBarbase}). Then, there is
an isomorphism 
\[
G_{n} \simeq \Spec(H_n)
\]
where $G_n$ is the tannakian group associated to $\MTM_{/\Spec(\F)}(\m_{0,n})$.

Moreover in the exact sequence  
\[
\begin{tikzpicture}[
baseline={(current bounding box.center)}]
\matrix (m) [matrix of math nodes,
 row sep=2.5em, column sep=2.5em,
 text height=2.0ex,  text depth=0.25ex] 
{ 0 & K_{n} & G_{n} & G(\Spec(\F)) & 0 \\ };
\path[->,font=\scriptsize]
(m-1-1) edge  (m-1-2)
(m-1-2) edge  (m-1-3)
(m-1-4) edge  (m-1-5)
(m-1-3) edge  node[below,mathsc] {\phi_{n}}(m-1-4)
(m-1-4) edge[dotted, bend right] node[mathsc,above]{\tilde x_{v}^*} (m-1-3)
(m-1-4) edge  (m-1-5);
\end{tikzpicture}
\]
any choice of a tangential based point $x_{v}^*$ in $P_{n, \infty}$, identifies 
$K_n$ with Deligne-Goncharov motivic fundamental group
$\pi_1^{mot}(\m_{0,n},x_{v})$. Moreover it defines an action of $G(\Spec(\F))$
on $K_n$ which is coming from the action over $\Spec(\Z)$. 
\end{coro}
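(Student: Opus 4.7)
The plan is to invoke Levine's Hopf-algebra description of the tannakian group attached to a smooth variety over a number field that is mixed Tate and satisfies the Beilinson-Soul\'e vanishing property, and then read off the description of the kernel $K_n$ by combining it with the split short exact sequence produced in Section~\ref{sec:GTmotgen}.

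First I would verify the hypotheses of Levine's theorem for $X=\m_{0,n}$ over $S=\Spec(\F)$. By Remark~\ref{BSnumberfield} the base $S$ satisfies \eqref{BS}, and Theorem~\ref{thm:M0nMTMBS} (which holds verbatim over $S=\Spec(\F)$ by the same induction on the boundary strata) gives both the mixed Tate property and \eqref{BS} for $\m_{0,n}$. Hence the general construction of \cite[\S 5--6]{LEVTMFG} applies: the tannakian Hopf algebra of $\MTM_{/\Spec(\F)}(\m_{0,n})$ is the $\HH^0$ of the (commutative) bar construction on an explicit cycle-algebra computing motivic cohomology. The second step is to identify this cycle-algebra with the differential graded algebra $\mc N_n$ defined above. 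This is the content of matching Levine's cubical Bloch-style cycle complex (with the alternating projection in the $\p^1\setminus\{1\}$-factors and the symmetric projection in the $\A^1$-factors) with $\mc N_n$; the concatenation product composed with the pull-back by $\Delta_n$ is precisely Levine's external product followed by diagonal pull-back, which endows $\mc N_n$ with the structure of a commutative dga. Once this identification is made, Levine's theorem yields $G_n\simeq\Spec(H_n)$.

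For the second part, I would apply the split exact sequence from the work of Section~\ref{sec:GTmotgen}. The passage to $\Q$-coefficients together with the $t$-structure gives the exact sequence
\[
0\lra K_n \lra G_n \st{\phi_n}{\lra} G(\Spec(\F))\lra 0,
\]
and Proposition~\ref{prop:sectgbasepoint} (applied in the rational setting as explained in Section~\ref{rationalcoef}) shows that any tangential base point $x_v\in P_{n,\infty}$ provides a section $\tilde x_v^*$. On the other hand, the same input (mixed Tate and \eqref{BS}) feeds Deligne--Goncharov's construction of the motivic fundamental group $\pi_1^{mot}(\m_{0,n},x_v)$ as the Tannakian group of the fiber functor at $x_v$ on the category of mixed Tate motives over $\m_{0,n}$ whose geometric part is trivialized at $x_v$. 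By the comparison between Levine's and Deligne--Goncharov's constructions (as recalled at the end of \cite[\S 6]{LEVTMFG}), the geometric kernel of $\phi_n$ equipped with the splitting $\tilde x_v^*$ is canonically identified with $\pi_1^{mot}(\m_{0,n},x_v)$.

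Finally, to see that the action of $G(\Spec(\F))$ on $K_n$ comes from an action over $\Spec(\Z)$, I would apply the functoriality established for the structural morphism $p_{\F,\mc P}:\Spec(\F)\lra \Spec(\mc O_{\F,\mc P})$, using the equivalence $\MTM_{/\Spec(\mc O_{\F,\mc P})}(\Spec(\mc O_{\F,\mc P}))\simeq \MTM^{DG}(\mc O_{\F,\mc P})$ established just above the corollary. Because all the constructions of Section~\ref{sec:GTmotgen} (limit motives, tangential base points, splittings) were performed over $\Spec(\Z)$ with $\Z$-coefficients and are compatible with base change, the action of $G_n$ by conjugation restricts to an action of $G(\Spec(\F))$ on $K_n$ which is the pull-back of the integral one. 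The main obstacle I expect is the precise matching of cycle-theoretic conventions between $\mc N_n$, Levine's complex computing higher Chow groups of $\m_{0,n}$, and Deligne--Goncharov's description of $\pi_1^{mot}$; once these bookkeeping identifications are in place, the corollary follows by stringing together the three inputs.
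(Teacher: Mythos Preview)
Your proposal is correct and follows exactly the approach the paper intends: the corollary is stated in the paper immediately after ``Following M.~Levine in \cite{LEVTMFG}, we obtain:'' with no further proof, so the argument is precisely the one you sketch---verify that $\m_{0,n}$ over $\Spec(\F)$ is mixed Tate and satisfies \eqref{BS} via Theorem~\ref{thm:M0nMTMBS}, then invoke Levine's Hopf-algebra/bar-construction description and his identification of the geometric kernel with the Deligne--Goncharov motivic fundamental group. Your remarks on the bookkeeping needed to match $\mc N_n$ with Levine's cycle complex and on the descent of the action from $\Spec(\Z)$ are apt and only make explicit what the paper leaves implicit.
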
 
 Note that the same holds for $G_{n_1,n_2}$ and that families of base points
 $x_v$ can be chosen in a compatible way. We then obtain
\begin{coro}
The tannakian group $G_{3}=G(\Z)$ injects in $GT^{mot}(\Spec(\Z))$.
\end{coro}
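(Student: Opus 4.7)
The plan is to construct an explicit homomorphism $\iota : G(\Z) \lra GT^{mot}(\Spec(\Z))$ coming from the splittings of the short exact sequences and then to establish injectivity by appealing to the (now proved) Deligne-Ihara faithfulness conjecture for $\m_{0,4}\simeq \ps$.

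First I would fix a \emph{compatible} family of tangential base points $(x_v)_{v\in P_{n,\infty}}$, $n\geqs 4$, and analogously for each product $\mo[n_1]\times \mo[n_2]$, in the sense that the forgetful morphisms $\psi_{n,i}$, the boundary inclusions $i_{n_1,n_2,D}$ and the projections $p_{n_1,n_2}$ send chosen base points to chosen base points at the level of vertex coordinates. Such a compatible family is available directly from Definition \ref{Pninftybasedpoints}: since $P_{n,\infty}$ is constructed via dihedral structures and vertex coordinates, and since the vertex coordinates on $\mob[n]$ restrict (resp. pull back) to vertex coordinates on $\mob[n-1]$ and on irreducible boundary components $\mob[n_1]\times \mob[n_2]$, the compatibility is inherent to the construction in the style of \cite{BrownMZVPMS}. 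By Proposition \ref{prop:sectgbasepoint}, each such $x_v$ splits the sequence \eqref{motSES}, so $G_n \simeq K_n \rtimes G(\Z)$, and likewise $G_{n_1,n_2}\simeq K_{n_1,n_2}\rtimes G(\Z)$.

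Given $g\in G(\Z)$, I would define $g_n \in \Aut(K_n)$ (resp.\ $g_{n_1,n_2}\in\Aut(K_{n_1,n_2})$) as conjugation by $\tilde x_v(g)$ inside $G_n$ (resp.\ $G_{n_1,n_2}$). Propositions \ref{prop:sescompatiforget} and \ref{prop:sescompatigluing} already give commutative diagrams for $\tilde\psi_{n,i}$, $\tilde i_{n_1,n_2,D}$ and $\tilde p_{n_1,n_2}$; combined with the compatibility of the chosen splittings, this yields that $(g_n,g_{n_1,n_2})$ commutes with all the tower morphisms. Compatibility with the symmetric group actions is automatic: the $\Sn$-action permutes the elements of $P_{n,\infty}$, and conjugation in $G_n$ is equivariant once one averages or, more precisely, uses that the splittings attached to $\sigma\cdot x_v$ and $x_v$ differ by the corresponding $\sigma$-action on $G_n$. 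This produces a group homomorphism $\iota : G(\Z) \lra GT^{mot}(\Spec(\Z))$.

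For injectivity I would specialize to $n=4$. In this case $\mo[4]\simeq \ps$, and Corollary \ref{Gn-algcycles} identifies $K_4$ (via any $x_v\in P_{4,\infty}$) with the Deligne-Goncharov motivic fundamental group $\pi_{1}^{mot}(\ps, x_v)$. Brown's theorem on motivic multiple zeta values (the motivic form of Hoffman's conjecture) insures that the outer action of the motivic Galois group $G(\Z)$ on $\pi_{1}^{mot}(\ps, x_v)$ is faithful. Hence if $\iota(g)=1$ then $g_4=\mathrm{id}$, which forces $g=1$, proving injectivity.

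The main obstacle will be the bookkeeping required for the compatibility of the family of tangential base points with \emph{all} of the tower morphisms simultaneously, together with signs and orientations coming from the dihedral structures when one composes a forgetful map with a boundary inclusion. One expects this to reduce, as in \cite{BrownMZVPMS}, to purely combinatorial statements about dihedral structures and vertex coordinates on $\mob[n]$; the subtle point is to check that the resulting element of $GT^{mot}(\Spec(\Z))$ does not depend on the choice inside the compatible family $(x_v)$, so that $\iota$ is well-defined and not merely defined up to an inner $\Sn$-ambiguity.
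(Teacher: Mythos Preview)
Your proposal is correct and follows exactly the same two-step strategy as the paper: construct a homomorphism $G(\Z)\to GT^{mot}(\Spec(\Z))$ from the splittings furnished by tangential base points, then deduce injectivity from Brown's faithfulness theorem for the action on $K_4\simeq\pi_1^{mot}(\ps,x_v)$. The paper's own proof is in fact two sentences and does not spell out the construction of the map beyond ``the diagram defining $GT^{mot}(\Spec(\Z))$ gives a morphism''; your version is a more honest unpacking of what that sentence means, and your concern about the bookkeeping of compatible base points is a genuine detail that the paper leaves implicit rather than resolves.
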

\begin{proof}
The diagram defining $GT^{mot}(\Spec(\Z))$ gives a morphism
\[
G(\Z)\lra GT^{mot}(\Spec(\Z)).
\]
F. Brown's work in \cite{BrownMTMZ} shows that the action of $G(\Z)$ on $K_4$ is
faithful which implies the injectivity of the above morphisms.
\end{proof}

How to  describe explicitly the Hopf algebra $H_n$ in terms of algebraic cycles in
$\mc N_n$; hence generalizing  the construction of \cite{SouMPCC}; will be addressed in
a future work.
\section{Some open questions}
We present in this last question some reasonable but still open problems related
to the present work. They are of different types. The first type concerns
a finer understanding of (derived) groups $K_n$. The second type relates our
construction to classical approaches to the Grothendieck-Teichmüller tower. 

First of all, working with $\Z$ coefficient over $\Spec(\Z)$ forces to consider
the triangulated categories $\DMT_{/\Spec(\Z),\Z}(\m_{0,n})$ and affine derived
group schemes $\Gb_{\Z,n}$. The geometric part $\Kb_{\Z,n}$ of $\Gb_{\Z,n}$ has been defined
as the kernel of the structure map
\[
\begin{tikzpicture}
\matrix (m) [matrix of math nodes,
 row sep=2em, column sep=2em,
 text height=2.0ex,  text depth=0.25ex] 
{0 & \Kb_{\Z,n} & \Gb_{\Z,n} & \Gb(\Z) & 0  \\ 
};
\path[->,font=\scriptsize]
(m-1-1) edge  (m-1-2)
(m-1-2) edge  (m-1-3)
(m-1-4) edge  (m-1-5)
(m-1-3) edge  node[below] {$\phi_n$}(m-1-4);
\end{tikzpicture}
\] 
\begin{conj}\label{conj:Kngroup}
The affine derived group scheme $\Kb_{\Z,n}$ is an affine group scheme:
\[
\Kb_{\Z,n}=K_{\Z,n}=\Spec(R_n)
\]
where $R_n$ is a commutative Hopf algebra (not necessarily co-commutative)
defined over $\Z$. 
\end{conj}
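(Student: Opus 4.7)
The plan is to exhibit $R_n$ as an ordinary commutative Hopf algebra over $\Z$, constructed as the $H^0$ of a bar-type complex of algebraic cycles, and to prove that the $E_\infty$-Hopf algebra controlling $\Kb_{\Z,n}$ is actually concentrated in cohomological degree $0$, hence reduces to $R_n$. The strategy mirrors the rational case of Section \ref{rationalcoefsetting} and Corollary \ref{Gn-algcycles}, but now one must isolate the ``geometric'' part from the derived contributions coming from $\Gb(\Z)$.

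The first step is to sharpen Theorem \ref{thm:M0nMTMBS} into a bigraded relative vanishing. One wants to show that the quotient of $\Hom_{\DMT_{/S,\Z}(\m_{0,n})}(\Z_{\m_{0,n}}(0),\Z_{\m_{0,n}}(p)[k])$ by the image of $p_n^*$ applied to $\Hom_{\DMT_{/S,\Z}(S)}(\Z_S(0),\Z_S(p)[k])$ vanishes outside the diagonal $k=p$. The tower $\m_{0,n}\to \m_{0,n-1}$ realises $\m_{0,n}$ as an $\A^1$-bundle with $n-3$ disjoint sections removed, so the Gysin triangle \eqref{eq:GysinT} combined with Theorem \ref{MobnTBS} should yield this vanishing by induction on $n$, starting from the base case $\m_{0,4}\simeq \ps$, where the three Gysin triangles attached to $0,1,\infty\in \p^1$ together with Theorem \ref{BSZ} give the result directly.

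The second step is to define a cycle complex $\mathcal{N}_n$ along the lines of Corollary \ref{Gn-algcycles} but over $\Spec(\Z)$ with $\Z$-coefficients and relative to $\Spec(\Z)$, and to set $R_n := H^0$ of its bar construction. The relative vanishing of the previous step would force the full bar complex to be concentrated in cohomological degree $0$, so $R_n$ is a genuine commutative Hopf algebra. The identification $\Spec(R_n) \simeq \Kb_{\Z,n}$ would then follow from the comodule-description of perfect representations used by Spitzweck in \cite{DFGTMSpit}, applied to the relative kernel category.

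The main obstacle will be integral torsion. Rationally, the statement is essentially the content of Section \ref{rationalcoefsetting}, where the analogous $K_n$ is pro-unipotent. Over $\Z$, the proof of Theorem \ref{BSZ} passes through Beilinson--Lichtenbaum/Bloch--Kato, and the resulting torsion extension classes in the motivic cohomology of $\Spec(\Z)$ are precisely what force $\Gb(\Z)$ to be genuinely derived. The heart of the conjecture is to show that these torsion contributions are absorbed entirely by $\Gb(\Z)$ and never appear in the relative piece, i.e.\ that the geometric cycle complex $\mathcal{N}_n$ is integrally pure with no higher cohomology. This purity statement seems to require either an explicit cellular-type decomposition of $\m_{0,n}$ compatible with integral motivic cohomology, or an extension of Levine's cycle-algebra techniques from smooth schemes over number fields (as in \cite{LEVTMFG}) to the integral setting of \cite{SpitCP1S, DFGTMSpit}; this is where the conjectural nature of the statement lies.
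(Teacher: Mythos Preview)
The statement you are attempting to prove is Conjecture~\ref{conj:Kngroup}, which the paper explicitly lists in its final section ``Some open questions'' as an open problem. There is no proof in the paper to compare your proposal against; the author states the conjecture and moves on.

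Your proposal is therefore not a proof but a strategy sketch, and you yourself identify this correctly in the final paragraph: the integral purity of the relative cycle complex is exactly the missing ingredient, and neither the paper nor the references it cites supply it. The rational analogue you invoke from Section~\ref{rationalcoefsetting} and Corollary~\ref{Gn-algcycles} works precisely because the $t$-structure is available with $\Q$-coefficients and because Levine's machinery in \cite{LEVTMFG} is set up over a number field; transporting this to $\Z$-coefficients over $\Spec(\Z)$ is the whole content of the conjecture, not a technicality. In particular, your ``first step'' (the relative bigraded vanishing) is already nontrivial integrally: the Gysin induction you propose controls the rational part, but the torsion in the motivic cohomology of the intermediate strata does not obviously cancel in the quotient by $p_n^*$, and Theorem~\ref{BSZ} gives you vanishing for $\Spec(\Z)$ alone, not for the relative groups. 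So the gap you name at the end is real and appears already at the first step, not only at the identification of $R_n$ with the coordinate ring of $\Kb_{\Z,n}$.
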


Concerning the embedding of codimension $1$ boundary components $D\simeq
\m_{0,n_1}\times \m_{0,n_2}$ we conjecture:
\begin{conj}\label{conj:Knmiso}
The induced morphism
\[
\Kb_{\Z,n_1,n_2} \xrightarrow[p_{n_1,n_2}]{} \Kb_{\Z,n_1}\times \Kb_{\Z,n_2}
\]
is an isomorphism.
\end{conj}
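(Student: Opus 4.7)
The plan is to deduce this isomorphism from a motivic Künneth equivalence at the level of triangulated categories of mixed Tate motives, translated to the derived group side via Theorem \ref{thm:DMTM0n}. The target is to show that $\Gb_{\Z,n_1,n_2}$ is the fiber product $\Gb_{\Z,n_1}\times_{\Gb(\Z)}\Gb_{\Z,n_2}$ over $\Gb(\Z)$; once this is established, taking kernels of $\phi_{n_1,n_2}$, with the splittings supplied by the compatible tangential base points of Proposition \ref{prop:sectgbasepoint}, gives $\Kb_{\Z,n_1,n_2}\simeq \Kb_{\Z,n_1}\times \Kb_{\Z,n_2}$.

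The first step is to establish a Künneth equivalence of triangulated categories
\[
\DMT^{gm}_{/S,\Z}(\mo[n_1]) \otimes_{\DMT^{gm}_{/S,\Z}(S)} \DMT^{gm}_{/S,\Z}(\mo[n_2]) \st{\sim}{\lra} \DMT^{gm}_{/S,\Z}(\mo[n_1]\times \mo[n_2]),
\]
the tensor product on the left being the relative $(\infty,1)$-tensor product of module categories over $\DMT^{gm}_{/S,\Z}(S)$. Both sides are compactly generated by Tate twists, and on generators the comparison is governed by the external Künneth morphism $M_S(\mo[n_1])\otimes M_S(\mo[n_2]) \simeq M_S(\mo[n_1]\times \mo[n_2])$, which follows from Spitzweck's six-functor formalism (Remark \ref{6functorformalisme}) together with the fact that each $M_S(\mo[n_i])$ is mixed Tate and satisfies \eqref{BS} (Theorem \ref{thm:M0nMTMBS}), so that the relevant motivic cohomology Künneth spectral sequence degenerates.

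The second step is to apply Spitzweck's equivalence $\on{Perf}(\Gb_{\Z,-})\simeq \DMT^{gm}_{/S,\Z}(-)$ and read off the group-theoretic content. By Tannakian duality for affine derived group schemes, as developed in \cite{ToenHHCS} and used in \cite{DFGTMSpit}, a relative tensor product of perfect module categories over $\on{Perf}(\Gb(\Z))$ corresponds, under Spec, to a fiber product of the affine derived groups over $\Gb(\Z)$. Combined with step one, this yields $\Gb_{\Z,n_1,n_2}\simeq \Gb_{\Z,n_1}\times_{\Gb(\Z)}\Gb_{\Z,n_2}$, and the kernel of the projection to $\Gb(\Z)$ on a fiber product of group schemes is precisely the product of the kernels, giving the desired isomorphism $p_{n_1,n_2}$.

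The main obstacle is the integral form of the Künneth equivalence in step one. Rationally (i.e.\ working with the Beilinson spectrum or equivalently $\MZ[\Q]{S}$, cf.\ Section \ref{rationalcoef}), and after passing to the heart of the weight $t$-structure, this is essentially built into the Tannakian formalism since the $\Ext$-groups between Tate objects are controlled by rational motivic cohomology, which satisfies Künneth over a field. With $\Z$-coefficients one must exclude higher $\Tor$-contributions coming from torsion in integral motivic cohomology, and the short exact sequences \eqref{motSES} only give partial information. A reasonable route is to first prove the $\Q$-coefficient statement (invoking Corollary \ref{Gn-algcycles} and the classical motivic Künneth for fundamental groups of products) and then bootstrap to $\Z$-coefficients using the comparison between $\MZ{S}$ and $H_{B,S}$ together with the Beilinson--Lichtenbaum / Bloch--Kato theorem exploited in Theorem \ref{BSZ}, which precisely measures the difference between integral and rational motivic cohomology of the base $\Spec(\Z)$.
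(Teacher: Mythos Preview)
The statement you are attempting to prove is listed in the paper as \emph{Conjecture} \ref{conj:Knmiso}, in the final section on open questions; the paper offers no proof and treats it as genuinely open. So there is no proof in the paper to compare your attempt against.

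As for your proposal itself, it is a plausible strategy but not a proof, and you essentially acknowledge this in your final paragraph. The serious gap is step one. The assertion that ``the relevant motivic cohomology K\"unneth spectral sequence degenerates'' is not justified: motivic cohomology does \emph{not} satisfy a K\"unneth formula in general, even rationally over a field, and the mixed Tate property plus \eqref{BS} for each factor does not by itself force the external product map on $\Hom$-groups between Tate objects to be an isomorphism. What you would need is a K\"unneth isomorphism for $\Hom_{\DMZ{S}}(\Z_S(0),\Z_S(p)[k])$ applied to the product $\mo[n_1]\times_S\mo[n_2]$, and this is not available in the literature for $S=\Spec(\Z)$ with integral (or even rational) coefficients. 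Your appeal to Bloch--Kato/Beilinson--Lichtenbaum in the last paragraph does not close the gap either: those results compare integral and rational motivic cohomology of a \emph{single} scheme, not the K\"unneth defect for a product.

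Step two also hides a nontrivial claim. The correspondence between relative tensor products of $\on{Perf}$-categories and fiber products of affine derived group schemes is not a formal consequence of \cite{ToenHHCS} or \cite{DFGTMSpit} as stated; one needs additional hypotheses (e.g.\ rigidity or dualizability of the module categories involved) that you have not verified. In short, your outline identifies correctly what one would \emph{like} to be true, but the two key inputs --- an integral motivic K\"unneth equivalence for the $\mo[n_i]$ and a Tannakian fiber-product statement for derived groups --- are precisely the content of the conjecture, not tools available to prove it.
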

A stronger version would state that 
\[
\Gb_{\Z,n_1,n_2} \xrightarrow[p_{n_1,n_2}]{} \Gb_{\Z,n_1}\times \Gb_{\Z,n_2}
\]
is an isomorphism. Conjectures \ref{conj:Kngroup} and \ref{conj:Knmiso}
would allow to consider only group automorphims of $\Kb_n$ in the definition of 
$GT_{\Z}^{\bullet}(S)$.

As we have already seen, choices  tangential bases points $x_v$ in
$P_{n, \infty}$  endows $K_n$ with action of $G(\Q)$ giving it a motivic
structure. As $GT^{mot}(\Spec(\F))$ acts on the tower of the $K_n$, it acts on
the tower of their realization. One expects:
\begin{conj} One has the following isomorphism
\[
GT^{mot}(\Spec(\F))\simeq GT \simeq \on{Aut}(\mc Real_{\textrm{Betti}}(K_*)).
\]
The weight filtration on the $K_n$ induces a weight filtration on
$GT^{mot}(\Spec(\F))$ and we denote by $GRT^{mot}(\Spec(F))$ the induced sum of
graded pieces. With these notation one has 
\[
GRT^{mot}(\Spec(\F))\simeq GRT \simeq \on{Aut}(\mc Real_{\textrm{\scriptsize De Rham}}(K_*)).
\] 
\end{conj}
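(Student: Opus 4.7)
The plan is to construct the comparison maps by applying Betti and De Rham realization functors term-by-term to the tower of the $K_n$'s. First I would recall that, via Corollary \ref{Gn-algcycles}, a tangential base point $x_v \in P_{n,\infty}$ identifies $K_n$ with Deligne-Goncharov's motivic fundamental group $\pi_1^{mot}(\m_{0,n}, x_v)$. The Betti realization of such a motivic fundamental group is, by the work of Deligne-Goncharov \cite{DG}, the prounipotent completion over $\Q$ of the topological fundamental group $\pi_1^{top}(\m_{0,n}(\C), x_v)$. Under this identification, the structure morphisms $\tilde\psi_{n,i}$, $\tilde i_{n_1,n_2,D}$, $\tilde p_{n_1,n_2}$ and the symmetric group actions of Propositions \ref{prop:sescompatiforget} and \ref{prop:sescompatigluing} correspond, respectively, to the forgetful, gluing, projection and permutation morphisms on the prounipotent tower of the $\m_{0,n}(\C)$, base-pointed at $P_{n,\infty}$. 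The second step is to invoke Drinfeld's formulation (in the prounipotent $\Q$-setting, as later clarified by Ihara, Furusho and others) of $GT$ as the group of automorphisms of precisely this Betti tower respecting all such compatibilities. Composing with realization then yields a canonical morphism
\[
\mathrm{Real}_B \colon GT^{mot}(\Spec(\F)) \lra GT.
\]

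Injectivity of $\mathrm{Real}_B$ I would reduce to the faithfulness of the Betti fiber functor on each tannakian category $\MTM_{/\Spec(\F)}(\m_{0,n})$: two motivic automorphisms of $K_n$ agreeing on the Betti side must coincide on the associated graded for the weight filtration, hence coincide on Tate extensions, hence coincide on the full tannakian group by the description via $\omega$ in Section \ref{rationalcoef}. The main obstacle is surjectivity: one must show that every compatible automorphism of the Betti tower lifts to a motivic automorphism of the tower of $K_n$'s. This is a natural relative generalization of Deligne's conjecture on the faithfulness of the motivic Galois action on $\pi_1^{mot}(\pst)$ over $\Spec(\Z)$, which has been established in the $n=4$ case by F. Brown in \cite{BrownMTMZ}. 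I would attempt to bootstrap from this case using Conjecture \ref{conj:Knmiso} to reduce the compatibility with boundary embeddings to low-$n$ data, then exploit the rigidity produced by combining forgetful maps, permutations and boundary maps to force a given compatible Betti automorphism to respect the motivic weight filtration and the action of $G(\Z)$, thereby lifting it.

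For the De Rham statement, the same scheme applies with $\mathrm{Real}_B$ replaced by the algebraic De Rham realization $\mathrm{Real}_{DR}$. The De Rham realization of $K_n$ is the prounipotent completion of the KZ-type connection naturally attached to $\m_{0,n}$, and compatible automorphisms of this tower form exactly Drinfeld's graded group $GRT$ (via associators). The canonical weight filtration on mixed Tate motives induces the filtration on $GT^{mot}(\Spec(\F))$ named in the conjecture, and the operation of passing to the associated graded intertwines the Betti isomorphism $GT^{mot}\simeq GT$ with the De Rham one $GRT^{mot}\simeq GRT$. This reflects the formality of the mixed Hodge structure on prounipotent fundamental groups in genus zero together with the standard identification $\mathrm{gr}^W GT \simeq GRT$, so that, granting the Betti half, the De Rham half would follow by a purely Hodge-theoretic comparison argument on the graded pieces.
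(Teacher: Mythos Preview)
The statement you are attempting to prove is listed in the paper as a \emph{conjecture} in the final section on open questions; the paper offers no proof. The only part the paper asserts as known is the second isomorphism in each display, $GT \simeq \on{Aut}(\mc Real_{\textrm{Betti}}(K_*))$ and $GRT \simeq \on{Aut}(\mc Real_{\textrm{De Rham}}(K_*))$, which it attributes to Bar-Natan \cite{Bar-NatanAssGT} following Drinfel'd \cite{DrinQTQH}. The isomorphisms $GT^{mot}(\Spec(\F)) \simeq GT$ and $GRT^{mot}(\Spec(\F)) \simeq GRT$ are left entirely open.

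Your proposal is not a proof but a strategy outline, and you are candid about this: you explicitly flag surjectivity of $\mathrm{Real}_B$ as ``the main obstacle'' and describe only what you ``would attempt''. That honesty is appropriate, but it means there is no argument here to evaluate against a nonexistent proof in the paper. Two further points deserve mention. First, your bootstrapping plan for surjectivity invokes Conjecture \ref{conj:Knmiso}, which is itself open in the paper; so even as a conditional argument your route is stacked on another conjecture. Second, your reduction of the De Rham half to the Betti half via ``$\mathrm{gr}^W GT \simeq GRT$'' is not a standard identification: $GT$ and $GRT$ are related through the torsor of associators, not by taking the weight-graded of one to obtain the other, and the claim that passing to associated graded on the motivic side intertwines the two realizations in the way you need would itself require justification. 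In short, the paper treats this as an open problem, and your write-up is best read as a plausible programme rather than a proof.
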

Note that in the above formulas the second isomorphism is a consequence of the
work of Bar-Natan \cite{Bar-NatanAssGT} following Drinfel'd work \cite{DrinQTQH}.


\bibliographystyle{amsalpha}
\bibliography{Gtmot}

\newcommand{\etalchar}[1]{$^{#1}$}
\providecommand{\bysame}{\leavevmode\hbox to3em{\hrulefill}\thinspace}
\providecommand{\MR}{\relax\ifhmode\unskip\space\fi MR }
\providecommand{\MRhref}[2]{%
  \href{http://www.ams.org/mathscinet-getitem?mr=#1}{#2}
}
\providecommand{\href}[2]{#2}
\begin{thebibliography}{DL{\O}{\etalchar{+}}07}

\bibitem[Ayoa]{AyoubAHGFMccnII}
Joseph Ayoub, \emph{{L}'alg{\`e}bre de {H}opf et le groupe de {G}alois
  motiviques d'un corps de caract{\'e}ristique nulle {II}}, Journal f{\"u}r die
  reine und angewandte Mathematik, to appear.

\bibitem[Ayob]{AyoubRCP}
\bysame, \emph{{U}ne version relative de la conjecture des p{\'e}riodes de
  {K}ontsevich-{Z}agier}, Journal f{\"u}r die reine und angewandte Mathematik,
  to appear.

\bibitem[Ayo07a]{Ayoub6OGI}
\bysame, \emph{Les six op\'erations de {G}rothendieck et le formalisme des
  cycles \'evanescents dans le monde motivique. {I}}, Ast\'erisque (2007),
  no.~314, x+466 pp. (2008).

\bibitem[Ayo07b]{Ayoub6OGII}
\bysame, \emph{Les six op\'erations de {G}rothendieck et le formalisme des
  cycles \'evanescents dans le monde motivique. {II}}, Ast\'erisque (2007),
  no.~315, vi+364 pp. (2008).

\bibitem[Be{\u\i}84]{BeiHRVLF}
A.~A. Be{\u\i}linson, \emph{Higher regulators and values of {$L$}-functions},
  Current problems in mathematics, {V}ol. 24, Itogi Nauki i Tekhniki, Akad.
  Nauk SSSR, Vsesoyuz. Inst. Nauchn. i Tekhn. Inform., Moscow, 1984,
  pp.~181--238.

\bibitem[BFLS99]{PSEMC}
X.~Buff, J.~Fehrenbach, P.~Lochak, and L.~Schneps, \emph{Espace de modules de
  courbes, groupes modulaires et th{\'e}orie des champs}, Panorama et
  Synth{\`e}se, no.~7, SMF, 1999.

\bibitem[BK94]{BKMTM}
Spencer Bloch and Igor Kriz, \emph{Mixed tate motives}, Anna. of Math.
  \textbf{140} (1994), no.~3, 557--605.

\bibitem[Blo86]{BlochACHKT}
Spencer Bloch, \emph{Algebraic cycles and higher {$K$}-theory}, Adv. in Math.
  \textbf{61} (1986), no.~3, 267--304.

\bibitem[BN98]{Bar-NatanAssGT}
Dror Bar-Natan, \emph{On associators and the {G}rothendieck-{T}eichmuller
  group. {I}}, Selecta Math. (N.S.) \textbf{4} (1998), no.~2, 183--212.

\bibitem[Bor74]{BorCAG}
Armand Borel, \emph{Stable real cohomology of arithmetic groups}, Ann. Sci.
  \'Ecole Norm. Sup. (4) \textbf{7} (1974), 235--272 (1975). \MR{0387496 (52
  \#8338)}

\bibitem[Bro09]{BrownMZVPMS}
Francis C.~S. Brown, \emph{Multiple zeta values and periods of moduli spaces
  {$\overline{\mathfrak M}_{0,n}$}}, Ann. Sci. \'Ec. Norm. Sup\'er. (4)
  \textbf{42} (2009), no.~3, 371--489.

\bibitem[Bro12]{BrownMTMZ}
Francis Brown, \emph{Mixed {T}ate motives over {$\mathbb Z$}}, Ann. of Math.
  (2) \textbf{175} (2012), no.~2, 949--976.

\bibitem[CD09]{CiDegTCM}
D.-C. Cisinski and F~D{\'e}glise, \emph{Triangulated categories of motives},
  http://arxiv.org/abs/0912.2110, 2009.

\bibitem[D{\'e}g08]{AGTIIDeg}
Fr{\'e}d{\'e}ric D{\'e}glise, \emph{Around the {G}ysin triangle. {II}}, Doc.
  Math. \textbf{13} (2008), 613--675.

\bibitem[DG05]{DG}
Pierre Deligne and Alexander~B. Goncharov, \emph{Groupes fondamentaux
  motiviques de {T}ate mixte}, Ann. Sci. \'Ecole Norm. Sup. (4) \textbf{38}
  (2005), no.~1, 1--56.

\bibitem[DL{\O}{\etalchar{+}}07]{MHTDLOR}
B.~I. Dundas, M.~Levine, P.~A. {\O}stv{\ae}r, O.~R{\"o}ndigs, and V.~Voevodsky,
  \emph{Motivic homotopy theory}, Universitext, Springer-Verlag, Berlin, 2007,
  Lectures from the Summer School held in Nordfjordeid, August 2002.

\bibitem[DM69]{DMISCGG}
Pierre Deligne and D.~Mumford, \emph{The irreducibility of the space of curves
  of given genus}, Pub. Math. Institut des Hautes Etudes Scientifiques (1969),
  no.~36, 75--109.

\bibitem[Dri91]{DrinQTQH}
V.~G. Drinfel{'}d, \emph{On quasitriangular quasi-{H}opf algebras and on a
  group that is closely connected with {${\on Gal}(\overline{\mbb Q}/{\mbb
  Q})$}}, translation in Leningrad Math. J. \textbf{2} (1991), no.~4, 829--860.

\bibitem[Ful98]{FultonIT}
William Fulton, \emph{Intersection theory}, second ed., Ergebnisse der
  Mathematik und ihrer Grenzgebiete. 3. Folge. A Series of Modern Surveys in
  Mathematics [Results in Mathematics and Related Areas. 3rd Series. A Series
  of Modern Surveys in Mathematics], vol.~2, Springer-Verlag, Berlin, 1998.

\bibitem[Gon01]{MPMTMGon}
A.~B. Goncharov, \emph{Multiple polylogarithms and mixed tate motives},
  www.arxiv.org/abs/math.AG/0103059, May 2001.

\bibitem[Hov01]{HoveySSSGMC}
Mark Hovey, \emph{Spectra and symmetric spectra in general model categories},
  J. Pure Appl. Algebra \textbf{165} (2001), no.~1, 63--127.

\bibitem[HS00]{Schneps-HFGMGT}
David Harbater and Leila Schneps, \emph{Fundamental groups of moduli and the
  {G}rothendieck-{T}eichm\"uller group}, Trans. Amer. Math. Soc. \textbf{352}
  (2000), no.~7, 3117--3148.

\bibitem[Ivo14]{IvorraCMIAA}
Florian Ivorra, \emph{Cycle modules and the intersection {$A_\infty$}-algebra},
  Manuscripta Math. \textbf{144} (2014), no.~1-2, 165--197.

\bibitem[Jar00]{JardMSS}
J.~F. Jardine, \emph{Motivic symmetric spectra}, Doc. Math. \textbf{5} (2000),
  445--553 (electronic).

\bibitem[Kah05]{KahnAKTACAG}
Bruno Kahn, \emph{Algebraic {$K$}-theory, algebraic cycles and arithmetic
  geometry}, Handbook of {$K$}-theory. {V}ol. 1, 2, Springer, Berlin, 2005,
  pp.~351--428.

\bibitem[Kee92]{IMSCKeel}
Sean Keel, \emph{Intersection theory of moduli space of stable {$n$}-pointed
  curves of genus zero}, Trans. Amer. Math. Soc. \textbf{330} (1992), no.~2,
  545--574.

\bibitem[Knu83]{PMSCKnud}
Finn~F. Knudsen, \emph{The projectivity of the moduli space of stable curves.
  {II}. {T}he stacks {$M\sb{g,n}$}}, Math. Scand. \textbf{52} (1983), no.~2,
  161--199.

\bibitem[KS94]{KaScha94}
Masaki Kashiwara and Pierre Schapira, \emph{Sheaves on manifolds}, Grundlehren
  der Mathematischen Wissenschaften [Fundamental Principles of Mathematical
  Sciences], vol. 292, Springer-Verlag, Berlin, 1994, With a chapter in French
  by Christian Houzel, Corrected reprint of the 1990 original.

\bibitem[Lev93]{LevTM}
M.~Levine, \emph{Tate motives and the vanishing conjecture for algebraic
  k-theory}, Algebraic {$K$}-{T}heory and {A}lgebraic {T}opology, Lake Louise
  1991 (Paul~G. Goerss and John~F. Jardine, eds.), NATO Adv. Sci. Inst. Ser C
  Math. Phys. Sci., no. 407, Kluwer Acad. Pub., Fevrier 1993, pp.~167--188.

\bibitem[Lev94]{LevBHCG}
Marc Levine, \emph{Bloch's higher {C}how groups revisited}, Ast\'erisque
  (1994), no.~226, 10, 235--320, $K$-theory (Strasbourg, 1992).

\bibitem[Lev98]{LevMM}
\bysame, \emph{Mixed motives}, no.~57, AMS, Providence, 1998.

\bibitem[Lev05]{KTMMLevine}
Marc Levine, \emph{Mixed motives}, Handbook of {$K$}-{T}heory (E.M Friedlander
  and D.R. Grayson, eds.), vol.~1, Springer-Verlag, 2005, pp.~429--535.

\bibitem[Lev10]{LEVTMFG}
Marc Levine, \emph{{Tate motives and the fundamental group.}}, Cycles, motives
  and Shimura varieties (V.~Srinivas, ed.), Tata Inst. Fund. Res. Stud. Math.,
  Tata Inst. Fund. Res., Mumbai, 2010, pp.~265--392.

\bibitem[MV99]{MorVoeHTS}
Fabien Morel and Vladimir Voevodsky, \emph{{${\mathbb A}^1$}-homotopy theory of
  schemes}, Inst. Hautes \'Etudes Sci. Publ. Math. (1999), no.~90, 45--143
  (2001).

\bibitem[Nak96]{NakamuraCUMRGT}
Hiroaki Nakamura, \emph{Coupling of universal monodromy representations of
  {G}alois-{T}eichm\"uller modular groups}, Math. Ann. \textbf{304} (1996),
  no.~1, 99--119.

\bibitem[Ros96]{RostCGC}
Markus Rost, \emph{Chow groups with coefficients}, Doc. Math. \textbf{1}
  (1996), No. 16, 319--393 (electronic).

\bibitem[Sou10]{SouMDS}
Ismael Soud{\`e}res, \emph{Motivic double shuffle}, Int. J. Number Theory
  \textbf{6} (2010), no.~2, 339--370.

\bibitem[Sou12]{SouMPCC}
\bysame, \emph{Cycle complex over {$\mathbb P^1$} minus {$3$} points : toward
  multiple zeta values cycles.}, http://arxiv.org/abs/1210.4653, 2012.

\bibitem[Sou14]{SouBarbase}
\bysame, \emph{A relative basis for mixed tate motives over the projective line
  minus three points}, http://arxiv.org/abs/1312.1849, 2014.

\bibitem[Spi01]{OAMSpit}
Markus Spitzweck, \emph{Operads, algebras and modules in model categories and
  motives}, Ph.D. thesis, Bonn Universit{\"a}t, 2001.

\bibitem[Spi05]{SpitMALM}
\bysame, \emph{Motivic approach to limit sheaves}, Geometric methods in algebra
  and number theory, Progr. Math., vol. 235, Birkh\"auser Boston, Boston, MA,
  2005, pp.~283--302.

\bibitem[Spi10]{DFGTMSpit}
Markus Spitzweck, \emph{Derived fundamental groups for tate motives}, preprint,
  2010.

\bibitem[Spi13]{SpitCP1S}
\bysame, \emph{{A} commutative {$\mathbb P^1$}-spectrum representing motivic
  cohomology over dedekind domains}, preprint, 2013.

\bibitem[SV00]{SusVoeBKCMC}
Andrei Suslin and Vladimir Voevodsky, \emph{Bloch-{K}ato conjecture and motivic
  cohomology with finite coefficients}, The arithmetic and geometry of
  algebraic cycles ({B}anff, {AB}, 1998), NATO Sci. Ser. C Math. Phys. Sci.,
  vol. 548, Kluwer Acad. Publ., Dordrecht, 2000, pp.~117--189.

\bibitem[Toe03]{ToenHHCS}
Bertrand Toen, \emph{Homotopical and higher categorical structures in algebraic
  geometry}, habilitation thesis, 2003.

\bibitem[Vez01]{VezBSSH}
Gabriele Vezzosi, \emph{Brown-{P}eterson spectra in stable {$\Bbb
  A^1$}-homotopy theory}, Rend. Sem. Mat. Univ. Padova \textbf{106} (2001),
  47--64.

\bibitem[Voe00]{Vo00}
V.~Voevodsky, \emph{Triangulated category of motives over a field}, Cycles,
  transfers, and motivic homology theories, Annals of Math. Studies, vol. 143,
  Princeton University Press., 2000.

\bibitem[Voe11]{VoeMCZ-lC}
Vladimir Voevodsky, \emph{On motivic cohomology with {$\bold
  Z/l$}-coefficients}, Ann. of Math. (2) \textbf{174} (2011), no.~1, 401--438.

\end{thebibliography}
\end{document}